\newif\ifpictures
\newif\ifcomment
\numberwithin{equation}{section}
\newtheorem{thm}{Theorem}
\newtheorem{prop}[thm]{Proposition}
\newtheorem{lemma}[thm]{Lemma}
\newtheorem{corollary}[thm]{Corollary}
\theoremstyle{definition}
\numberwithin{thm}{section}
\newenvironment{definition}
  {\pushQED{\qed}\definitionx}
  {\popQED\enddefinitionx}
\newenvironment{example}
  {\pushQED{\qed}\examplex}
  {\popQED\endexamplex}
\newenvironment{remark}
  {\pushQED{\qed}\remarkx}
  {\popQED\endremarkx}
\newcounter{FNC}[page]
\def\newfootnote#1{{\addtocounter{FNC}{2}$^\fnsymbol{FNC}$%
     \let\thefootnote\relax\footnotetext{$^\fnsymbol{FNC}$#1}}}
\newcommand{\C}{\mathbb{C}}
\newcommand{\N}{\mathbb{N}}
\newcommand{\R}{\mathbb{R}}
\newcommand{\Z}{\mathbb{Z}}
\newcommand{ \<}{\langle}
\renewcommand{\>}{\rangle}
\newcommand\cF{{\ensuremath{\mathcal{F}}}\xspace}
\newcommand\cN{{\ensuremath{\mathcal{N}}}\xspace}
\newcommand\cP{{\ensuremath{\mathcal{P}}}\xspace}
\newcommand\cR{{\ensuremath{\mathcal{R}}}\xspace}
\newcommand\cS{{\ensuremath{\mathcal{S}}}\xspace}
\newcommand{\al}{\boldsymbol{\alpha}}
\newcommand{\bom}{\boldsymbol{\omega}}
\newcommand{\ba}{\boldsymbol{a}}
\newcommand{\bb}{\boldsymbol{b}}
\newcommand{\bc}{\boldsymbol{c}}
\newcommand{\bd}{\boldsymbol{d}}
\newcommand{\bbe}{\boldsymbol{e}}
\newcommand{\br}{\boldsymbol{r}}
\newcommand{\bt}{\boldsymbol{t}}
\newcommand{\bu}{\boldsymbol{u}}
\newcommand{\bv}{\boldsymbol{v}}
\newcommand{\bw}{\boldsymbol{w}}
\newcommand{\x}{\mathbf{x}}
\newcommand{\bz}{\boldsymbol{z}}
\newcommand{\ssp}{R}
\newcommand{\smsp}{S}
\newcommand{\mosp}{M}
\definecolor{NiceBlue}{rgb}{0.2,0.2,0.75}
\newcommand{\struc}[1]{{\color{NiceBlue} #1}}
\DeclareMathOperator{\Aff}{Aff}
\DeclareMathOperator{\sgn}{sgn}
\DeclareMathOperator{\conv}{conv}
\DeclareMathOperator{\supp}{supp}
\DeclareMathOperator{\vol}{vol}
\DeclareMathOperator{\sing}{sing}
\DeclareMathOperator{\rank}{rank}
\newcommand{\Pplus}{\mathcal{P}^+}
\newcommand{\Splus}{\mathcal{S}^+}
\title{The algebraic boundary of the sonc-cone}
\begin{document}

\author{Jens Forsg{\aa}rd}

\address{Jens Forsg{\aa}rd, Universiteit Utrecht, Mathematisch Instituut, Postbus 80010, 
3508 TA Utrecht, The Netherlands\medskip}

\email{j.b.forsgaard@uu.nl}

\author{Timo de Wolff}

\address{Timo de Wolff, Technische Universit\"at Braunschweig, Institut f\"ur Analysis und Algebra, AG Algebra, Universit\"atsplatz 2, 38106 Braunschweig,
 Germany\medskip}

\email{t.de-wolff@tu-braunschweig.de}

\subjclass[2010]{
Primary:
14M25, 
14T05, 
55R80,
26C10. Secondary:
12D10, 
14P10, 
52B20 
}
\keywords{Agiform, convexity, discriminant, fewnomials, nonnegative polynomial, positivity, sparsity, sum of nonnegative circuit polynomial, sum of squares}
 
 \begin{abstract}
In this article, we explore the connections between nonnegativity, the theory of $A$-discriminants, and tropical geometry. For an integral support set $A \subset \Z^n$, we cover the boundary of the 
sonc-cone by semi-algebraic sets that are parametrized by families of tropical hypersurfaces.
As an application, we characterization generic support sets
for which the sonc-cone is equal to the sparse nonnegativity cone,
and we describe a semi-algebraic stratification of the boundary of the sonc-cone in the univariate case.
\end{abstract}

\maketitle

\vspace{-4mm}
\begin{center}
	\textit{Dedicated to Bruce Reznick on the occasion of his 66th birthday.}
\end{center}

\section{Introduction}

Describing the cone $\struc{\cP_{1+n, 2\delta}}$ consisting of all nonnegative $(1+n)$-variate homogeneous polynomials of degree $2\delta$ is a central problem in real algebraic geometry. 
A classical approach is to study the sub-cone $\struc{\Sigma_{1+n,2\delta}}$
consisting of all sums of squares (SOS).
In 1888, Hilbert obtained the seminal result that $\Sigma_{1+n,2\delta} = \cP_{1+n,2\delta}$ if and only if $n=1$, or $\delta = 1$, or $n=\delta=2$ \cite{Hilbert:Seminal}.
That $\Sigma_{1+n,2\delta}$ and $\cP_{1+n,2\delta}$ are distinct in general motivated Hilbert's 17th problem, which was solved in the affirmative by Artin \cite{Artin,Reznick:SurveyHilbert17th}.
Further results describing the relationship between $\Sigma_{1+n,2\delta}$ and $\cP_{1+n, 2\delta}$ was obtained only recently.
In 2006, Blekherman proved that for  $\delta \geq 2$, asymptotically in $n$,
almost no nonnegative form is a sum of squares  \cite{Blekherman:SignificantlyMoreNNThanSOS}.
In the case $(n, \delta) =(2, 3)$ of ternary sextics
the Cayley--Bacharach relation yields an obstruction for positive forms to be SOS \cite{Blekherman:NNandSOS}. Similar results hold in the case $(n, \delta)=(3, 2)$ of quaternary quadrics.

While the boundary of $\cP_{1+n, 2\delta}$ is contained in a discriminantal hypersurface  \cite{Nie:NonnegativityCone}, the structure of the boundary of $\Sigma_{1+n,2\delta}$, as
a space stratified in semi-algebraic varieties, is more complicated.
For ternary sextics (and similarly for quaternary quartics), 
Blekherman et al.\ showed that the algebraic boundary of the SOS cone has a unique non-discriminantal irreducible component of degree $83200$, which is the Zariski closure of the locus of all sextics that are sums of three squares of cubics \cite{Blekherman:Hauenstein:Ottem:Ranestad:Sturmfels}.
Despite these developments, no complete algebraic description of the SOS cone is known.

\smallskip
In this article, we take the sparse approach to nonnegativity, where one replaces the degree and arity bounds by an explicit set of monomials. For technical reasons, we prefer exponential
sums to polynomials. Thus, we consider a \struc{\emph{support set}} $\struc{A}  = \{\al_0, \dots, \al_d\} \subset \R^n$, and the 
real vector space $\struc{\R^A}$ consisting of all exponential sums
\[
f(\x) = \sum_{i = 0}^d a_i \, e^{\<\al_i, \x\>}
\]
\emph{\struc{supported}} on $A$, where $a_i \in \R$.
We are interested in the \struc{\emph{sparse nonnegativity cone}}
\[
\struc{\Pplus_A} = \big\{ \,f \in \R^A \, \big| \, f(\x) \geq 0 \, \text{ for all } \x \in \R^n \, \,\big\}.
\]
In this setting, the relevant nonnegativity certificate is existence of
a \emph{sonc-de\-com\-position}, where ``sonc'' stands for ``sum of nonnegative circuit polynomial.''
Let us begin by introducing
\emph{circuit polynomials} and \emph{agiforms}.

\smallskip
Let $\struc{\cN(A)} = \conv(A) \subset \R^n$ denote the \struc{\emph{Newton polytope}}
of $A$. If it is clear from context which support set
 is considered, then we denote the Newton polytope by $\cN$.
We abuse notation and identify $A$ with the matrix whose columns are 
the elements of $A$, with an added top row consisting of the all ones vector.
In block form,
\begin{equation}
\label{eqn:SetA}
A  = \left[\begin{array}{ccc}  1 & \cdots & 1 \\ \al_0 & \cdots & \al_d \end{array}\right].
\end{equation}
A \struc{\emph{circuit}} $\struc{c} \subset A$ is a minimal dependent
subset of $A$.
We say that a circuit $c$ is \struc{\emph{simplicial}} if its convex hull is a 
simplex.
Each circuit $c \subset A$ corresponds, up to a scaling factor,
to a vector $\struc{\bc} \in \ker(A)$ with minimal support.
We have that $c$ is simplicial if and only if the vector $\bc$ can be chosen 
with exactly one negative coordinate. We denote by \struc{$C$} the set
of all simplicial circuits contained in $A$.

\smallskip
Let $c \subset A$ be a simplicial circuit.
Then, it follows from the arithmetic-geometric-mean inequality (AGI)
that the exponential sum
\begin{equation}
\label{eqn:agiform}
 g_c(\x) \ = \ \sum_{i = 0}^d \bc_i \, e^{\<\al_i, \x\>}
\end{equation}
is  nonnegative. Following Reznick \cite{Rez89}, we call \eqref{eqn:agiform}
a \struc{\emph{(simplicial) agiform}}.
Reznick initiated the systematic study of the sub-cone of $\Pplus_A$
generated by all simplicial agiforms.\footnote{Reznick considered the more general
notion of an \emph{agiform}, and proved that the edge generators of the cone of all
agiforms are simplicial agiforms. All agiforms appearing in this paper are simplicial.}
The \emph{equality}-part of the AGI implies that $g_c(\x)$ vanishes on the linear
space
\begin{equation}
\label{eqn:AgiformSingular}
\sing(g_c) = \Aff(c)^\perp,
\end{equation}
where $\struc{\Aff}$ stands for the affine hull. Here, $\struc{\sing}$ stands for ``singular'';
since $g_c(\x)$ is nonnegative, any point of vanishing in $\R^n$ will be a singular point.
We will justify using the phrase \emph{singular locus}, rather than \emph{vanishing locus},
in a moment, when describing the relationship with discriminants.

Simplicial agiforms were generalized to \textit{nonnegative circuit polynomials} by Iliman and the second author \cite{Iliman:deWolff:Circuits}; there are three important differences.
First, if $\bw \in \R^n$, then the composition $g_c(\x - \bw)$ is formally not an agiform in Reznick's sense, but it is a nonnegative circuit polynomial.
Second, exponential monomials with a nonnegative coefficient
are  considered  to be circuit polynomials.
Third, circuit polynomials, in difference to Reznick's agiforms, are not necessarily singular.
From a modern perspective, the relationship between agiforms and circuit polynomials can be summarized in that the singular circuit polynomials are obtained from simplicial agiforms
through the group action
\begin{equation}
\label{eqn:agiform}
(\bw, g_c) \mapsto g_c(\x - \bw).
\end{equation}
Notice that the singular locus of $h(\x) = g_c(\x - \bw)$ is the affine space $\bw + \Aff(c)^\perp$
and, hence, we can think of $\bw$ as acting by translating the singular loci of $g_c$.

\smallskip
The \struc{\emph{sonc-cone}} $\struc{\Splus_A} \subset \Pplus_A$ is the cone generated by all
nonnegative circuit polynomials in $\R^A$,\footnote{We use the symbol $\subset$ as meaning ``subset or equal to.''}. The sonc-cone was later studied independently by Chandrasekaran and 
Shah \cite{Chandrasekaran:Shah:SAGE} under the name \textit{sage}.
The three overlapping naming conventions are unfortunate. 
We use the term ``agiform'' throughout, to emphasize the importance of the real singular locus, and as an homage to Reznick's pathbreaking work \cite{Rez89} from three decades ago.
A slight warning is appropriate, as we include also \eqref{eqn:agiform} in the class
of agiforms. 

\smallskip
To state our main result, we need to introduce the notion of a $\Lambda$-discriminant.
Here, $\struc{\Lambda}$ denotes a regular subdivision of the Newton polytope
$\cN$. We write $\Lambda$ as the set, whose elements are the
closed cells of the subdivision. Each $\lambda \in \Lambda$
defines a set of simplicial circuits
\[
\struc{C_\lambda} = \{\,c \in C \, \mid \, c \subset \lambda \,\}.
\]
The subset of $A$ \struc{\emph{covered by simplicial circuits in $\Lambda$}} is
the set
\[
\struc{A_\Lambda} = \bigcup_{\lambda \in \Lambda} \bigcup_{c \in C_\lambda} c.
\]

\begin{definition}
\label{def:LambdaDiscriminant}
Let $A$ be a real configuration,
and let $\Lambda$ be a regular subdivision of the Newton polytope $\cN$.
The \struc{\emph{$\Lambda$-discriminant}}, denoted \struc{$D_\Lambda$},
is the locus of all exponential sums of the form
\begin{equation}
\label{eqn:LambdaDiscriminant}
f(\x) = \sum_{\lambda \in \Lambda} \sum_{c \in C_\lambda} t_{c} \, g_c(\x - \bw_\lambda)
 + \sum_{\al \notin A_\Lambda} t_{\al} \,e^{\<\x, \al\>},
\end{equation}
where $g_c$ is a simplicial agiform supported in $c$,
where $\bw = \{\bw_\lambda\}_{\lambda \in \Lambda}$ is a family
real parameters such that, for any pair $\lambda_i, \lambda_j \in \Lambda$
whose intersection $\lambda_i \cap \lambda_j$ is nonempty,
\begin{equation}
\label{eqn:Binomials}
\bw_{\lambda_i} - \bw_{\lambda_j} \in \Aff(\lambda_{i} \cap \lambda_{j})^\perp,
\end{equation}
and $\bt = \{t_c\}_{c \in C} \cup \{t_{\al}\}_{\al \in A}$ is a family of real parameters.\footnote{
Since we write $\Lambda$ as a set of closed cells, a lower-dimensional circuit $c$ can
be contained in more than one cell $\lambda \in \Lambda$. However, the conditions
\eqref{eqn:Binomials} ensures that all agiforms supported on $c$ appearing in \eqref{eqn:LambdaDiscriminant} has the same singular locus and, hence, they 
differ only by a scaling factor. That is, there is no loss of generality in assuming that
each simplicial circuit $c \in C$ appears at most once in the first sum of \eqref{eqn:LambdaDiscriminant}.}
\end{definition}

Our main result is the following theorem. 

\begin{thm}
\label{thm:Main}
The boundary of the sonc-cone $\Splus_A$ is contained in the union of the coordinate hyperplanes and the $\Lambda$-discriminants,
as $\Lambda$ ranges over all regular subdivision of the Newton polytope $\cN(A)$.
\end{thm}

\begin{remark}[The connection to tropical geometry]
\label{rem:Tropical}
In Definition \ref{def:LambdaDiscriminant}, we allowed the parameters
range $\bt$ and $\bw$ range over all real values fulfilling \eqref{eqn:Binomials}. Our proof of Theorem~\ref{thm:Main} allows a stronger statement, with further restrictions on the parameters.
See Theorem~\ref{thm:MainStrata},
which we state only in the integral case.
If we require that $\bt \geq 0$ and that the parameters $\bw$ are \emph{arranged according to a tropical
variety dual to $\Lambda$} (see \S \ref{sec:TropicalArrangements} 
and \S\ref{sec:PositiveDiscriminant}),
then we obtain a semi-algebraic
set $S_\Lambda \subset D_\Lambda \cap \Splus_A$.
We prove that the union of the coordinate axes and the semi-algebraic sets $S_\Lambda$
contains the boundary of the sonc-cone.
\end{remark}

As an application, we characterize generic support sets 
$A$ for which the sonc-cone equals the sparse nonnegativity cone,
disproving a conjecture by Chandrasekaran, Murray, and Wierman \cite[Conjecture 22]{Chandrasekaran:Murray:Wierman},
see Theorem~\ref{thm:NonnegativeIsSonc}, and 
Examples~\ref{ex:Equality1} and \ref{ex:lastexample}.

\bigskip
Let us end this introduction by revisiting the integral case, when $A \subset \Z^n$.
Then, the substitution $e^{\x} = \bz$ turns $\R^A$ into 
sparse family of polynomials. This is perhaps the most interesting case, and most of
our examples are presented in this context.
Since the exponential function is a
diffeomorphism $\R^n \rightarrow \R_+^n$, membership in
$\Pplus_A$ corresponds, in the polynomial case, to nonnegativity over the positive orthant.
We have not translated our results to the case of nonnegativity 
of polynomials over the reals, but we note that there are two standard approaches. The first is to certify
nonnegativity separately for each orthant. The second is to either restrict to an orthant
of $\R^A$, or impose conditions on the support set $A$, 
as to ensure that the global minimum is attained in $\R_+^n$.

\smallskip
Let us justify the name \emph{$\Lambda$-discriminant}.
In the integral case, the locus \eqref{eqn:LambdaDiscriminant} is algebraic
in the parameters $\bt$ and $e^{\bw}$. 
In particular, if $\struc{\overline{D}_\Lambda}$ denotes the Zariski closure of
$D_\Lambda$ in the complex affine space $\C^A = \overline{\R^A}$,
then $\overline{D}_\Lambda$ is a complex affine algebraic variety.
If $\Lambda$ is the trivial regular subdivision of $\cN$,
that has a unique top-dimensional cell, then, under mild conditions,\footnote{For example, that at least one point of $A$ is contained in the interior of $\cN$.} $\overline{D}_\Lambda$ 
coincides with the eponymous \emph{$A$-discri\-minant} \cite{GKZ94}.
In fact, \eqref{eqn:LambdaDiscriminant} is nothing but
the Horn--Kapranov uniformization of the $A$-discriminant. 
In analogy with the Horn--Kapranov map, \eqref{eqn:LambdaDiscriminant} yields a
parametrization of the $\Lambda$-discriminant where the parameters represent
scaling factors and locations of singular loci.

\smallskip
Let us stay in the integral case. Restricting the parameters as mentioned
in Remark~\ref{rem:Tropical}, we obtain a covering of the boundary of the
sonc-cone by closed semi-algebraic sets.
By taking a common refinement, we obtain a stratification of the boundary of $\Splus_A$
into semi-algebraic sets. 
We give a complete description of this stratification in the case of a family of univariate exponential sums in \S \ref{sec:Univariate}. 
A number of issues remains to be resolved, before we can
describe this stratification completely in the multivariate case.
The most important issue being to determine the dimension of the intersection of a 
$\Lambda$-discriminant with the boundary of the sonc-cone. Even to describe the dimension
of the $\Lambda$-discriminant itself, in terms of combinatorial properties of $\Lambda$,
which generalizes the recently solved problem of computing the dual-defectivity of 
a toric variety \cite{Est10, Est13, For19}, remains open.

\subsection{Disposition}
We first cover some preliminaries on circuits, \S\ref{sec:circuits}, and on the sonc-cone, \S\ref{sec:SoncCone}.
We introduce sonc-supports and study singular loci of simplicial agiforms in \S\ref{sec:SoncSupports}.
In section \S\ref{sec:Univariate} and \S\ref{sec:TropicalArrangements}, we prove the technical
versions of Theorem~\ref{thm:Main}, first in the univariate case, and then in the multivariate case.
We turn our focus to $\Lambda$-discriminants in \S\ref{sec:PositiveDiscriminant},
completing the proof of our main theorem, and describing the stratification of the boundary of the
sonc cone in the integral univariate case.
We finish with the characterization of generic support sets for which the sonc-cone coincides with the nonnegativity cone in \S\ref{sec:Equality}.

\section*{Acknowledgments}
We thank the anonymous referees for their numerous helpful comments.
This project was initiated during the spring semester on ``Tropical Geometry, Amoebas, and Polytopes'' at Institut Mittag-Leffler (IML), and we thank the Swedish Research Council under grant no.~2016-06596 for the excellent working conditions. 
We cordially thank St{\'e}phane Gaubert for the enlightening discussions.
JF was supported by Vergstiftelsen during his stay at IML.
JF gratefully acknowledges the support of SNSF grant \#159240 ``Topics in tropical and real geometry,'' the NCCR SwissMAP project,
and the Netherlands Organization for Scientific Research (NWO),
grant TOP1EW.15.313.
TdW was supported by the DFG grant WO 2206/1-1.

\section{Preliminaries \emph{on} Circuits}
\label{sec:circuits}

\subsection{Sparse families of exponential sums}
Let $A \subset \R^n$ be a support set, as in the introduction, of cardinality $d$.
The \struc{\emph{dimension}} of $A$, denoted by $\struc{\dim(A)}$, is the dimension of the affine span of the Newton polytope $\cN$. We use the notation $\struc{F \preccurlyeq \cN}$ to denote that
$F$ is a face of the polytope $\cN$.
The \struc{\emph{codimension}} of $A$ is introduced as
\[
\struc{m}= d-\dim(A).
\]
Then, $\rank(A) = 1+\dim(A)$ and, hence, 
$m = \dim (\ker(A))$.

\smallskip
Each element  $\al \in A$ defines a function $\R^n \rightarrow \R$ by the map 
$\x \mapsto e^{\<\x, \al\>}$, and such a function is called an \struc{\emph{exponential monomial}}.
The ordered set $A$ defines the \struc{\emph{exponential toric morphism}} $\varphi_A\colon \R^n \rightarrow \R^{1+d}$
whose coordinates are exponential monomials:
\begin{equation}
\label{eqn:RealToricMorphism}
\struc{\varphi_A(\x)} = \big( e^{\<\x, \al_0\>}, \dots, e^{\<\x, \al_d\>} \big).
\end{equation}
The associated sparse family of exponential sums $\R^A$ is the real vector space 
consisting of all compositions of the exponential toric morphism $\varphi_A$
and a linear form acting on $\R^{1+d}$,
\begin{equation}
\label{eqn:f}
	\struc{\R^A} \ = \ \big\{\,\,f(\x) =  \<\varphi_A(\x), \ba\>\,\, \big| \,\,\ba \in \R^{1+d}\,\,\big\}.
\end{equation}
There is a canonical isomorphism $\R^A \simeq \R^{1+d}$,
and a canonical group action $\R^n\times \R^A \rightarrow \R^A$, given by
$(\bw, f) \mapsto f(\x - \bw)$. Under the identification $\R^A \simeq \R^{1+d}$,
the group action takes the form $(\bw, \ba) \mapsto \ba*\varphi(-\bw)$, where $\struc{*}$ denotes component-wise multiplication.

\begin{example}
\label{ex:Jens}
The matrix
\begin{equation}
\label{eq:AJens}
A = \left[\begin{array}{cccccc} 1 & 1 & 1 & 1 & 1 & 1 \\ 0 & 2 & 3 & 1 & 2 & 1 \\ 0 & 0 & 0 & 1 & 1 & 2 \end{array}\right]
\end{equation}
has $\dim(A) = 2$. The Newton polytope $\cN(A)$ appears in the leftmost picture in Figure~\ref{fig:Circuits}. We have that $d = 6$ and $m = 3$. The family $\R^A$ consists of all bivariate exponential sums
\[
f(x_1, x_2) = a_0 + a_1\, e^{2 x_1} + a_2\, e^{3x_1} + a_3 \,e^{x_1 + x_2}+ a_4 \,e^{2x_1 + x_2}+ a_5 \,e^{x_1 + 2x_2}.\qedhere
\]
\end{example}

\begin{figure}[t]
\includegraphics[height=21mm]{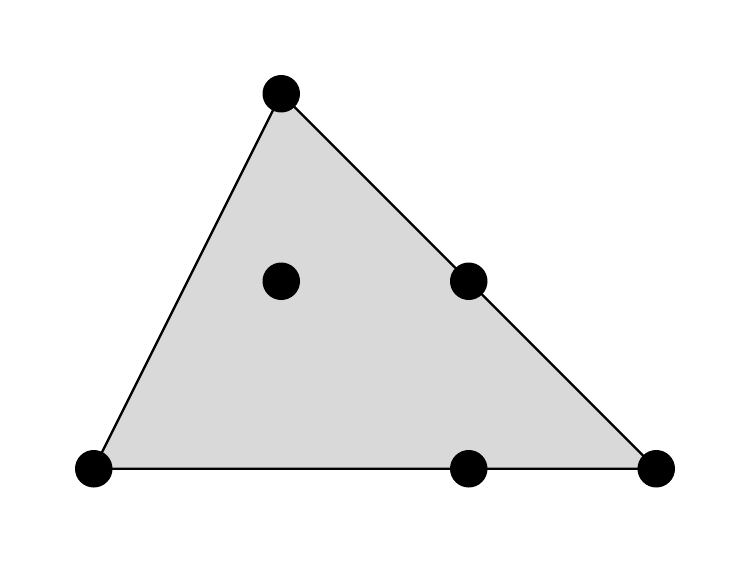}
\hspace{2mm}
\includegraphics[height=21mm]{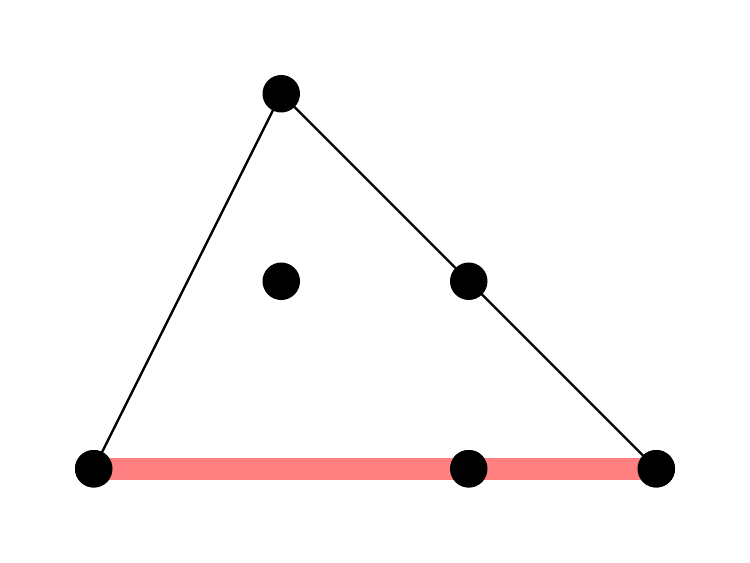}
\hspace{2mm}
\includegraphics[height=21mm]{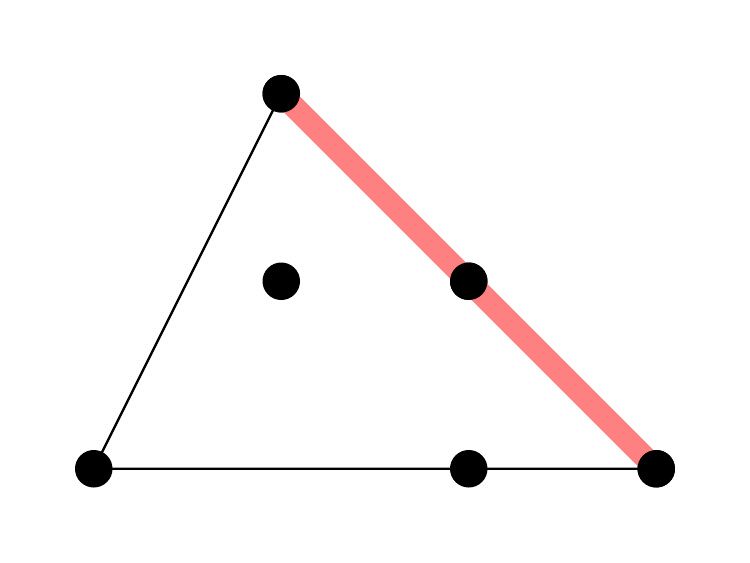}
\hspace{2mm}
\includegraphics[height=21mm]{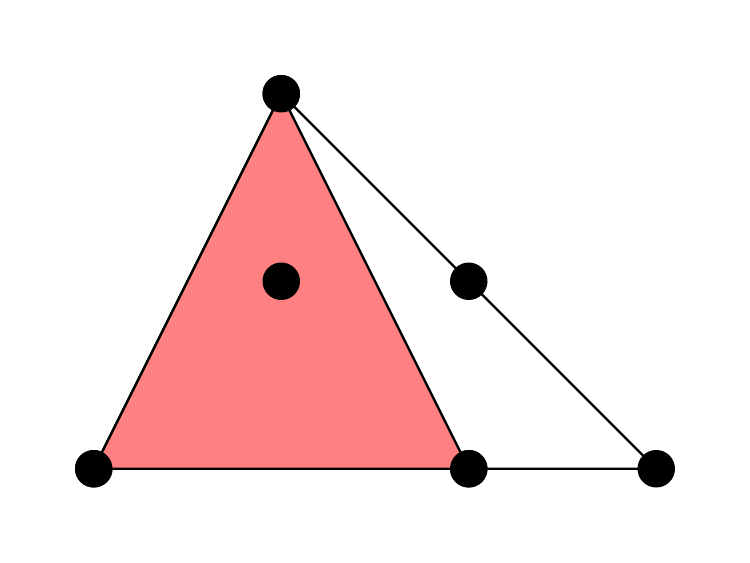}
\hspace{2mm}
\includegraphics[height=21mm]{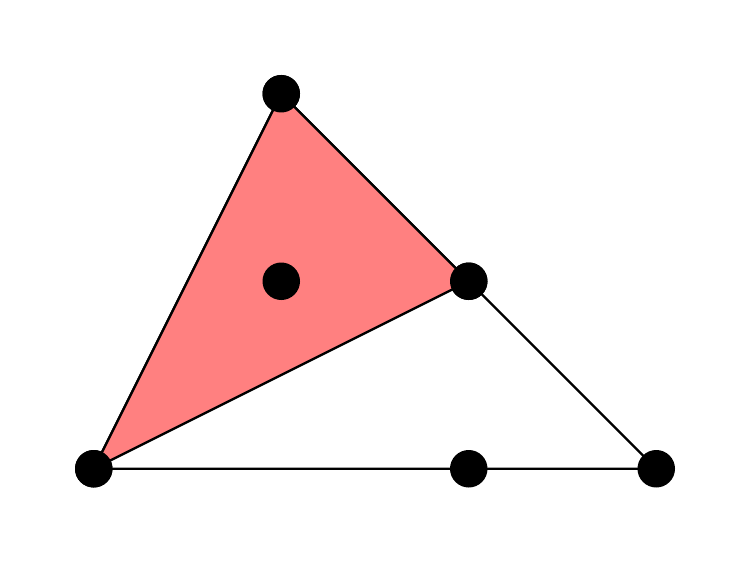}
\caption{The Newton polytope from Example~\ref{ex:Jens},
and the four simplicial circuits which yield edge generators
of $\cR_A$.}
\label{fig:Circuits}
\end{figure}

\subsection{Barycentric coordinates}

\smallskip
Let $c \subset A$ be a simplicial circuit with relative interior point $\al_0$ and vertices $\al_1, \dots, \al_d$. Then, the \struc{\emph{barycentric coordinates}} of $\al_0$ are the convex expression
\begin{equation}
\label{eq:BarycentricCoordinates}
\left\{ \begin{array}{lllllll}
1 & = &\beta_1& + &\dots& + &\beta_d\\
\al_0 & = &\beta_1\, \al_1 &+ &\dots &+ &\beta_d \, \al_d,
\end{array}\right.
\end{equation}
where $\beta_1, \dots, \beta_d > 0$.
It follows that the vector $\bc \in \ker(A)$ is (up to scaling)
\begin{equation}
\label{eqn:CanonicalCircuitVector}
 \bc = (-1, \beta_1, \dots, \beta_d).
\end{equation}
Whenever $c \subset A$ is a simplicial circuit, we call
\eqref{eqn:CanonicalCircuitVector} its \struc{\emph{barycentric coordinate vector}}.
If $A$ is integer, then we can re-scale $\bc$ to obtain a primitive integer vector.
Let us prove a variation of Carath{\'e}odory's Theorem.

\begin{lemma}
\label{lem:SimplexCircuitConstruction}
Let $\al_0\in A$ be a relative interior point of the Newton polytope
$\cN$, and let $\al_1 \in A$
be arbitrary but distinct from $\al_0$. Then, there exists a simplicial
circuit $c \subset A$ with $\al_0$ as its relative interior point
and $\al_1$ as a vertex.
\end{lemma}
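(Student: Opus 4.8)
The plan is to realize the desired circuit as $\{\al_0\}$ together with an affinely independent subset of $A\setminus\{\al_0\}$ that contains $\al_1$ and has $\al_0$ in the relative interior of its convex hull; the latter set is produced from a Carath\'eodory-type reduction arranged so as never to discard $\al_1$. The first step is to record that $\al_0$ admits a convex representation in the remaining points of $A$ with strictly positive weight on $\al_1$. Since $\al_0$ lies in the relative interior of $\cN(A)$ it is not a vertex, so $\conv(A\setminus\{\al_0\})=\cN(A)$ and $\al_0\in\operatorname{relint}(\conv(A\setminus\{\al_0\}))$; as $\al_1\in\cN(A)$ and $\al_1\neq\al_0$, the point $\al_0+\varepsilon(\al_0-\al_1)$ still lies in $\cN(A)$ for some small $\varepsilon>0$. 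Expanding this point as a convex combination of $A\setminus\{\al_0\}$ and solving for $\al_0$ gives
\[
\al_0 \;=\; \sum_{i=1}^{d}\lambda_i\,\al_i,\qquad \lambda_i\ge 0,\quad \sum_{i=1}^{d}\lambda_i=1,\quad \lambda_1\ge\tfrac{\varepsilon}{1+\varepsilon}>0.
\]

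Next I would thin out this representation. Set $I=\{\,i:\lambda_i>0\,\}\ni 1$. As long as $\{\al_i:i\in I\}$ is affinely dependent, choose an affine relation $\sum_{i\in I}\mu_i\al_i=0$ with $\sum_{i\in I}\mu_i=0$ and $\mu\neq 0$, after a sign flip arranged so that $\mu_1\ge 0$. Since $\sum_i\mu_i=0$, some $\mu_i<0$; with $t=\min\{\lambda_i/(-\mu_i):\mu_i<0\}$, attained at some index $j\neq 1$, the perturbed coefficients $\lambda_i+t\mu_i$ remain nonnegative, still sum to $1$, vanish at $j$, and at the index $1$ are $\ge\lambda_1>0$. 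Discarding the indices where they vanish strictly shrinks $I$ while keeping $1\in I$, so after finitely many steps one obtains a set $S\ni 1$ for which $\{\al_i:i\in S\}$ is affinely independent and $\al_0=\sum_{i\in S}\beta_i\al_i$ with all $\beta_i>0$ and $\sum_{i\in S}\beta_i=1$ (and $|S|\ge 2$, since $|S|=1$ would force $\al_0=\al_i$).

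Finally, by \eqref{eq:BarycentricCoordinates} the vector carrying $-1$ at $\al_0$ and $\beta_i$ at $\al_i$ for $i\in S$ lies in $\ker(A)$; affine independence of $\{\al_i:i\in S\}$ together with $\beta_j>0$ for every $j$ shows that deleting any single element of $\{\al_0\}\cup\{\al_i:i\in S\}$ leaves an affinely independent set, so the vector is minimally supported, i.e.\ a circuit $\bc\subset A$. Its signature is $\{1,|S|\}$, hence $\bc$ is simplicial, with $\al_0$ its relative interior point and $\al_1$ among its vertices, as required. I expect the only genuine obstacle to be the reduction step: an unmodified Carath\'eodory argument could eliminate $\al_1$, and orienting the chosen affine dependence so that the coefficient at $\al_1$ never decreases is precisely what rescues the argument; the rest is elementary convexity and linear algebra.
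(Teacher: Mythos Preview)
Your argument is correct. The Carath\'eodory-style reduction with the sign convention $\mu_1\ge 0$ is exactly what guarantees that the coefficient at $\al_1$ never drops, so $\al_1$ survives to the final affinely independent set $S$; the verification that $\{\al_0\}\cup\{\al_i:i\in S\}$ is minimally dependent is clean.

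The paper takes a different, more geometric route. It draws the line $\ell$ through $\al_0$ and $\al_1$, lets $\hat\al_0$ be the boundary intersection on the far side of $\al_0$ from $\al_1$, and writes $\al_0=\delta_0\hat\al_0+\delta_1\al_1$. It then picks a regular triangulation of the minimal face $F\preccurlyeq\cN(A)$ containing $\hat\al_0$, expresses $\hat\al_0$ as a convex combination of the vertices $\al_2,\dots,\al_r$ of the cell containing it, and substitutes. Affine independence of $\{\al_1,\dots,\al_r\}$ follows because $\al_0\notin F$ forces $\al_1\notin F$, hence $\al_1\notin\Aff(F)$. Your approach is more elementary in that it avoids invoking triangulations altogether and handles everything with a single controlled reduction; the paper's approach, by contrast, makes the simplex explicit as ``$\al_1$ plus a simplicial cell on the opposite boundary face,'' which is geometrically vivid and foreshadows the later use of regular subdivisions.
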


\begin{proof}
Let $\ell$ denote the line through $\al_0$ and $\al_1$,
and let $\bv = \al_0 - \al_1$ be a directional vector of $\ell$.
By convexity, $\ell$ intersects the boundary of 
the Newton polytope $\cN$ in two points $\hat \al_0$ and $\hat \al_1$.
The set $\{\hat \al_0, \al_0, \al_1, \hat \al_1\}$ is ordered along $\ell$ by
\[
\<\bv, \hat \al_0\> > \<\bv, \al_0\> > \<\bv, \al_1\> \geq  \<\bv,\hat \al_1\>,
\]
where the first inequality is strict
since $\hat \al_0 \neq \al_0$. In particular
$\{\hat \al_0, \al_0, \al_1\}$ is a one-dimensional (simplicial) circuit.
The barycentric coordinates yields a convex expression
\[
\al_0 = \delta_0 \hat \al_0 + \delta_1 \al_1.
\]
Let $F$ denote the smallest face of $\cN$ containing $\hat \al_0$.
By Carath{\'e}odory's Theorem, there is a convex expression
\[
\hat \al_0 =\beta_2 \, \al_2 + \dots + \beta_r\, \al_r,
\]
where $\al_2, \dots, \al_r \in F \cap A$ forms the vertices of a simplex.
It follows that
\[
\al_0 = \delta_1 \al_1 + \delta_0 \beta_2 \al_r + \dots + \delta_0\beta_r \al_r
\]
is a convex expressions for $\al_0$ in terms of $\{\al_1, \dots, \al_r\}$.
Since $\al_0 \notin F$, we have that $\al_1 \notin F$. Hence, the set
$\{\al_1, \dots, \al_r\}$ forms the vertices of a simplex. It follows that $\{\al_0, \al_1, \dots, \al_r\}$ is a simplicial circuit.
\end{proof}

\subsection{The Reznick cone}
\label{sec:ReznickCone}
Let us revisit the simplicial agiforms appearing in \eqref{eqn:LambdaDiscriminant}, in
the definition of the $\Lambda$-discriminant.
Using \eqref{eqn:RealToricMorphism}, we can write
\[
g_c(\x - \bw_\lambda) = \< \varphi_A(\x - \bw), \bc\>.
\]
In this notation, the first (double) sum of \eqref{eqn:LambdaDiscriminant} can be expressed as
\[
\sum_{\lambda \in \Lambda} \< \varphi_A(\x - \bw), \ba_\lambda\>,
\quad
\text{ where }
\quad
 \ba_\lambda = \sum_{c \in C_\lambda} t_c \, \bc.
\]
If we restrict to $t_c \geq 0$, then 
$\ba_\lambda$ is an element of the cone spanned by the vectors $\bc$.

\begin{definition}
\label{def:ReznickCone}
The \struc{\emph{Reznick cone}} $\struc{\cR_A} \subset \ker(A)$ is the cone spanned by the barycentric coordinate vectors $\bc$ for all simplicial circuits $c \in C$.
We say that $\cR_A$ is \struc{\emph{full-dimensional}} if it has dimension $m = \dim(\ker(A))$.
\end{definition}

Properties of the Reznick cone
are often reflected in properties of the sonc-cone. In this section, 
we describe the edge generators of the Reznick cone,
and provide a sufficient conditions for $\cR_A$ to be a simple polyhedral cone.

\begin{thm}
\label{thm:EdgeGenerators}
A simplicial circuit $\bc \in \ker(A)$ is an edge generator of the Reznick cone $\cR_A$ if and only if the set $c \subset A$ is \struc{minimal} in the sense that
\[
\cN(c) \cap A = c.
\]
\end{thm}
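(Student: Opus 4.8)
The plan is to prove the two implications separately, in both cases reasoning about the Newton polytope of a circuit in terms of the circuits it contains.

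First I would prove the contrapositive of the ``only if'' direction: suppose $\bc \subset A$ is a simplicial circuit with relative interior point $\al_0$ and vertices $\al_1, \dots, \al_d$, and suppose there exists some $\al \in A$ with $\al \in \cN(\bc)$ but $\al \notin \bc$. Since $\al$ lies in the simplex $\cN(\bc)$ but is not a vertex of it, $\al$ lies in the relative interior of some face $F$ of $\cN(\bc)$ spanned by a subset $V \subset \{\al_1, \dots, \al_d\}$. Apply Lemma~\ref{lem:SimplexCircuitConstruction} (or just the barycentric expansion directly) to write $\al$ as a strictly positive convex combination of the vertices in $V$; this gives a simplicial circuit $\bc' \subset \bc \cup \{\al\} \subset A$ whose relative interior point is $\al$. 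Now I would use the two convex expressions — the barycentric coordinates of $\al_0$ relative to $\al_1, \dots, \al_d$ and of $\al$ relative to $V$ — to exhibit $\bc$ as a positive combination of two other elements of $\cR_A$: substituting the expression for $\al$ (say $\al = \sum_{i \in V} \gamma_i \al_i$) into a suitable modification of the barycentric expansion of $\al_0$ yields $\al_0$ as a convex combination involving $\al$ together with a proper subset of $\{\al_1, \dots, \al_d\}$, hence a simplicial circuit $\bc''$ with $\al_0$ as interior point and $\al$ as a vertex. The identity $\bc = t\bc'' + (1-t)\bc'''$ for appropriate $t \in (0,1)$, where $\bc'''$ uses $\bc'$ to ``split off'' the contribution of $\al$, then shows $\bc$ is not an edge generator. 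The bookkeeping here is the delicate part: I must check the coefficients are genuinely positive (using that all barycentric coordinates are strictly positive) and that the signs in $\ker(A)$ match up so that the combination lands in the Reznick cone rather than merely in $\ker(A)$.

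For the ``if'' direction, suppose $\cN(\bc) \cap A = \bc$ and suppose $\bc = \bc' + \bc''$ with $\bc', \bc'' \in \cR_A$ nonzero. Each of $\bc', \bc''$ is a nonnegative combination of barycentric coordinate vectors of simplicial circuits; in particular their supports are contained in $A \cap \cN(\bc') $ etc. The key point is that for a barycentric coordinate vector $\bd = (-1, \beta_1, \dots)$ the single negative entry sits at the relative interior point, and a nonnegative combination of such vectors lying in $\ker(A)$ with support exactly $\bc$ forces all the negative entries to be stacked at one coordinate — namely the coordinate of $\al_0$ — because otherwise cancellation between a negative entry of one summand and positive entries of others would be needed, but nonnegative combinations cannot cancel. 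Hence every simplicial circuit appearing in $\bc'$ (and in $\bc''$) has $\al_0$ as its relative interior point, and all its vertices lie in $A \cap \cN(\bc) = \bc$, i.e.\ among $\al_1, \dots, \al_d$. But $\al_0$ is already in the relative interior of the full simplex $\conv(\al_1, \dots, \al_d)$, and a proper subset of the vertices spans a proper face; $\al_0$ cannot lie in the relative interior of a proper face. Therefore the only simplicial circuit available is $\bc$ itself, so $\bc', \bc''$ are both nonnegative multiples of $\bc$, proving $\bc$ spans an edge.

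I expect the main obstacle to be the first (``only if'') direction — specifically, producing the explicit decomposition $\bc = t\bc'' + (1-t)\bc'''$ with verified positivity and with both summands genuinely in $\cR_A$. The natural approach is to treat it as a statement about the barycentric simplex: restrict attention to $\cN(\bc)$, use that an interior point $\al$ of a face gives a triangulation-type refinement, and track how the unique negative coordinate of each barycentric vector moves. A clean way to organize this is to observe that adding the point $\al$ to the circuit $\bc$ and taking the two circuits supported on $\bc \cup \{\al\}$ that one obtains by ``pushing'' $\al$ either toward $\al_0$'s combination or away from it gives exactly the two desired generators; the convexity inequalities in Lemma~\ref{lem:SimplexCircuitConstruction} are precisely what guarantees the needed strict positivity of the weights.
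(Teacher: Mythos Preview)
Your ``only if'' direction is essentially the paper's approach: construct two auxiliary simplicial circuits through the extra point $\al$ (one with $\al$ as interior point, one with $\al$ as vertex and $\al_0$ as interior point) and express $\bc$ as a positive combination of them. The paper streamlines the positivity check by restricting to $A' = \bc \cup \{\al\}$, noting $\ker(A')$ is two-dimensional, and invoking an auxiliary inclusion lemma; your proposed explicit barycentric bookkeeping would also work, just with more algebra.

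Your ``if'' direction, however, has a genuine gap. The assertion that ``nonnegative combinations cannot cancel'' is false: if $\bc = \sum_j s_j \bd_j$ with $s_j > 0$ and each $\bd_j$ a barycentric coordinate vector, nothing prevents some $\bd_j$ from having its unique negative entry at a point $\gamma \neq \al_0$, with that negative contribution cancelled against positive entries of other $\bd_{j'}$ at $\gamma$. So you cannot conclude directly that every $\bd_j$ has $\al_0$ as its interior point, and you also have not justified why the support of each $\bd_j$ lies inside $\cN(\bc)$.

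The paper repairs exactly this point with a short auxiliary lemma: if $\bc = \sum_j s_j \bd_j$ is a positive combination of simplicial circuits distinct from $\bc$, then $\cN(\bd_j) \subset \cN(\bc)$ for every $j$. The proof looks at a vertex $\al$ of $\cN(\bigcup_j \bd_j)$ not in $\cN(\bc)$; since $\al \notin \bc$ the $\al$-coordinate of the sum vanishes, forcing some $\bd_{j'}$ to have a negative entry at $\al$, whence $\al$ is the interior point of $\bd_{j'}$ --- contradicting that $\al$ is a vertex of a polytope containing $\cN(\bd_{j'})$. Once you have $\bd_j \subset A \cap \cN(\bc) = \bc$, minimality of the circuit $\bc$ forces $\bd_j = \bc$, and you are done. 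Your instinct to track where the negative entries sit is correct; the missing idea is to look at an \emph{extremal} point of the union of supports, where no cancellation is possible.
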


\begin{proof}
Let $A  = \{\al_0, \dots, \al_d\}$, and let $c = \{\al_0, \dots, \al_r\}$ be a simplicial circuit of dimension $r$ with relative interior point $\al_0$. Assume first that there is a point $\al_{r+1}\in \cN(c)\cap A$. 
It suffices to consider the case that $A = c \cup \{\al_{r+1}\}$. 

On the one hand, we have that $\al_{r+1}$ is contained in the relative interior of some face $F \preccurlyeq \cN(c)$ and, hence, it is the relative interior point of some
simplicial circuit $c_1 \subset A$. On the other hand, Lemma~\ref{lem:SimplexCircuitConstruction} gives that $\al_{r+1}$
is a vertex of some simplicial circuit $c_2 \subset A$ which has
$\al_0$ as its relative interior point. Since $\ker (A)$ is two dimensional,
there is a linear relation between the vectors $\bc, \bc_1,$ and $\bc_2$.
By \eqref{eqn:EdgeGenInclusion}, we conclude that 
$\bc$ is a positive combination of $\bc_1$ and $\bc_2$.
Since the vectors $\bc_1$ and $\bc_2$ are linearly independent (as they represent distinct circuits),
this implies that $\bc$ is not an edge generator of $\cR_A$.

To prove the converse, 
we first show that, if there is a positive combination
\begin{equation}
\label{eqn:EdgeGenLinComb}
\bc =  t_1\,\bc_1 + \dots + t_k \bc_k,
\end{equation}
where each $c_i$ is a simplicial circuit distinct from $c$, then we have for all $i$ that
\begin{equation}
\label{eqn:EdgeGenInclusion}
\cN(c_i) \subset \cN(c).
\end{equation}
Indeed, if $\al$ is a vertex of the union $\bigcup_i \cN(c_i)$, then $\al$ is not a relative interior point
of any of the circuits $c_i$. That is, the $\al$-coordinate in each term of the right hand side of \eqref{eqn:EdgeGenLinComb} is nonnegative. Since $\al$ is a vertex of at least one of the
$c_i$, the $\al$-coordinate in the right hand side of \eqref{eqn:EdgeGenLinComb} is positive.
Hence, we have a positive coordinate also in the left hand side, which implies that
$\al$ is a vertex of $\cN(c)$.

Now assume that $\bc$ is not an edge generator of $\cR_A$. Then, there exists
a relation of the form \eqref{eqn:EdgeGenLinComb},
where $c_i \subset \cN(c)$ for each $i$.
Since $c$ is a circuit, it holds that $c_i \subset c$ if and only if
$c_i = c$. But $c_i$ is distinct form $c$ and, hence, there exists an
element $\al \in c_i \setminus c$. Then, $\al \in \cN(c) \cap A$
and $ \al \notin c$.
\end{proof}

When searching for a nonnegativity certificate in this sparse setting,
it suffices to consider simplicial circuits which are edge generators of $\cR_A$. 
in special cases, the number of edge generators can be significantly smaller than the total number of simplicial circuits.

\begin{prop}
\label{pro:DimensionOneSimplePolyhedralCone}
If $A$ has dimension $n=1$,
then $\cR_A$ is a simple polyhedral cone.
\end{prop}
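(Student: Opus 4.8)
The plan is to set $n=1$, so that $A = \{\alpha_0, \dots, \alpha_d\}$ is a finite set of real numbers, which after reordering we may assume satisfies $\alpha_0 < \alpha_1 < \dots < \alpha_d$. In this situation the Newton polytope $\cN(A)$ is the interval $[\alpha_0, \alpha_d]$, the vertices are $\alpha_0$ and $\alpha_d$, and every other $\alpha_i$ is a relative interior point. A simplicial circuit in dimension one must have signature $\{1,2\}$: it consists of two endpoints $\alpha_i < \alpha_j$ and one interior point $\alpha_k$ with $i < k < j$; its barycentric coordinate vector is supported exactly on $\{\alpha_i, \alpha_k, \alpha_j\}$, negative at $\alpha_k$ and positive at the two endpoints. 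By Theorem~\ref{thm:EdgeGenerators}, such a circuit $\bc = \{\alpha_i,\alpha_k,\alpha_j\}$ is an edge generator of $\cR_A$ if and only if $\cN(\bc) \cap A = \bc$, i.e. if and only if $\alpha_k$ is the \emph{only} point of $A$ strictly between $\alpha_i$ and $\alpha_j$; equivalently, $j = i+2$ and $k = i+1$. So there are exactly $d-1$ edge generators, namely $\bc^{(i)} = \{\alpha_{i-1}, \alpha_i, \alpha_{i+1}\}$ for $i = 1, \dots, d-1$, one for each interior point of $A$.

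Next I would identify $\ker(A) \subset \R^{1+d}$, which has dimension $m = d - 1$ since $\rank(A) = 2$. The key observation is that the $d-1$ edge generators $\bc^{(1)}, \dots, \bc^{(d-1)}$ are linearly independent: the vector $\bc^{(i)}$ is the unique (up to scaling) element of $\ker(A)$ supported on $\{\alpha_{i-1}, \alpha_i, \alpha_{i+1}\}$, and in particular $\bc^{(i)}$ has a nonzero entry in coordinate $i$ while $\bc^{(j)}$ has a zero entry in coordinate $i$ whenever $|i-j| \geq 2$; a short argument (e.g. looking at the extreme indices in the support of a hypothetical linear dependence, or writing the $(d-1)\times(1+d)$ matrix of these vectors in its tridiagonal-like shape and reading off a nonzero minor) shows they are independent. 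Since there are $d-1 = m = \dim \ker(A)$ of them and they are linearly independent, they form a basis of $\ker(A)$, so $\cR_A$ is a full-dimensional cone in $\ker(A)$ with exactly $m$ extreme rays. A cone with exactly $\dim$-many extreme rays is simplicial, hence simple.

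I would then need to confirm that $\cR_A$ genuinely has $d-1$ extreme rays and not fewer — that is, that no $\bc^{(i)}$ lies in the cone generated by the others. This follows from Theorem~\ref{thm:EdgeGenerators} itself (each $\bc^{(i)}$ \emph{is} an edge generator, hence extreme), but one can also see it directly: if $\bc^{(i)} = \sum_{j\neq i} t_j \bc^{(j)}$ with $t_j \geq 0$, then by the auxiliary inclusion \eqref{eqn:EdgeGenInclusion} every $\bc^{(j)}$ appearing must satisfy $\cN(\bc^{(j)}) \subset \cN(\bc^{(i)}) = [\alpha_{i-1}, \alpha_{i+1}]$, forcing $j = i$, a contradiction. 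The main obstacle, such as it is, is the bookkeeping to show linear independence cleanly and to be careful about the degenerate cases $d \leq 1$ (where $\ker(A)$ is trivial or $\cR_A$ is a point or empty, which is vacuously simple); everything else is a direct application of Theorem~\ref{thm:EdgeGenerators} together with the dimension count. I expect the cleanest write-up to order the argument as: (1) classify dimension-one simplicial circuits and their edge generators via Theorem~\ref{thm:EdgeGenerators}; (2) count: exactly $d-1$ edge generators; (3) compute $\dim\ker(A) = d-1$; (4) prove the edge generators are linearly independent; (5) conclude that $\cR_A$ is a simplicial, hence simple, cone.
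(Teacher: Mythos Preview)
Your proposal is correct and follows essentially the same route as the paper: identify the edge generators via Theorem~\ref{thm:EdgeGenerators} as the $d-1$ consecutive triples $\{\alpha_{i-1},\alpha_i,\alpha_{i+1}\}$, compare with $\dim\ker(A)=m=d-1$, and conclude simplicity. The only cosmetic difference is that the paper obtains full-dimensionality by observing that in dimension one \emph{every} circuit is simplicial (so the Reznick cone spans all of $\ker(A)$), whereas you verify linear independence of the $d-1$ edge generators directly; both arguments are equivalent and equally short.
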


\begin{proof}
There is no loss of generality in assuming that $A = \{\alpha_0, \dots, \alpha_d\} \subset \Z$,
where $\alpha_0 < \dots < \alpha_d$. We conclude from Theorem~\ref{thm:EdgeGenerators}
that the edge generators of $\cR_A$ are the 
simplicial circuits $\{\alpha_{i-1}, \alpha_i, \alpha_{i+1}\}$
for $i = 1, \dots, d-1$.
Since all circuits in $A$ are simplicial, the Reznick cone 
has dimension $m$.
Since $m = d-1$, we conclude that $\cR_A$ is simple.
\end{proof}

The Reznick cone is in general not simple. For example,
the support set $A$ from Example~\ref{ex:Jens} is three-dimensional,
but the Reznick cone $\cR_A$ has four edge generators;
the corresponding simplicial circuits are depicted in Figure~\ref{fig:Circuits}.

\begin{prop}
\label{prop:BasisSimplexCircuits}
If the Newton polytope $\cN(A) = \conv(A)$ has a relative interior point, then $\cR_A$ is full-dimensional.
\end{prop} 

\begin{proof}
We show that the vectors $\bc$ span the $\ker(A)$ over $\R$
by a double induction over the dimension $n$ and the codimension $m$. 
The induction bases consist of the cases $m=1$ for arbitrary $n \geq 1$, and $n=1$ for arbitrary $m\geq 1$. 
If $m=1$, then $A$ is a simplicial circuit, and the kernel of $A$
is one-dimensional. If $n=1$, then all circuits $c \subset A$ are simplicial circuits,
and the claim follows from that the set of all circuits span $\ker(A)$.

Let $\al_0$ denote an interior point of $\cN(A)$. Expressing $\al_0$ as a generic convex combination of $\al_1, \dots, \al_d$, yields a vector $\bb_1 \in \ker(A)$
with the sign pattern
\begin{equation}
\label{eqn:SignPattern}
\sgn \bb_1 = (-, + ,+, \dots,+,  +).
\end{equation}
Extend $\bb_1$ to a basis $\{\bb_1, \dots, \bb_m\}$ of $\ker(A)$.  After adding sufficiently large multiple of $\bb_1$ to $\bb_k$, there is no loss of generality in assume that each $\bb_k$ has the sign pattern \eqref{eqn:SignPattern}.

We modify the vectors $\bb_1, \dots, \bb_m$ one by one by elimination. At step $k$,
choose some $j \neq k$, and replace $\bb_k$ with the vector
\[
\hat \bb_k = \bb_k - \varepsilon \bb_j,
\]
where $\varepsilon$ is chosen as the smallest positive number such that $\hat \bb_k$ has at least one vanishing coordinate. The vector $\hat \bb_k$ is nontrivial, since $\bb_k$ and $\bb_j$ are linearly independent. Hence, the first coordinate of  $\hat \bb_k$ must be negative, since $\ker(A)$ does not contain any nontrivial nonnegative vectors.

After iteratively replacing $\bb_k$ by $\hat \bb_k$, we obtain a basis $\{\bb_1, \dots, \bb_m\}$ of $\ker(A)$
fulfilling the following two conditions. First, as $\bb_k$ has a vanishing coordinate,
its support is a strict subset $A_k \subset A$. Second, as the only negative coordinate
of $\bb_k$ is its first coordinate, the support set $A_k$ has $\al_0$ as a relative interior point.
Since a strict subset of $A$ has either strictly smaller dimension or strictly smaller codimension, the induction hypothesis implies that each 
$\bb_k$ can be written as a linear combination of vectors $\bc$ for simplicial circuits $c \subset A_k$. But any simplicial circuit contained in $A_k$ is contained in $A$.
Since $\{\bb_1, \dots, \bb_k\}$ is a basis for $\ker(A)$, the statement follows.
\end{proof}

\begin{example}
The simplest example of a support set $A$ such that $\ker(A)$ is not spanned by simplicial circuits
is the case when $A$ is a non-simplicial circuit. In this case, $\ker(A)$ has dimension
one, but $A$ contains no simplicial circuits. 
\end{example}

\begin{example}
If $A$ is a two-dimensional support set with no relative interior point, 
then all elements $\al \in A$ are located on (one-dimensional) edges of the Newton polytope
$\cN$. One can prove, using Theorem~\ref{thm:EdgeGenerators} and Proposition~\ref{pro:DimensionOneSimplePolyhedralCone}, that, in this case,
the dimension of the Reznick cone is equal to the number of points of $\al$ which 
are not vertices of $\cN$. For example, $\cR_A$ is full-dimensional if and only if $\cN$ is a simplex.
\end{example}

\section{Preliminaries \emph{on} The Sonc-Cone}
\label{sec:SoncCone}

\subsection{The inequality of arithmetic and geometric means}
Let us repeat a few things from the introduction, using the notation of \S \ref{sec:circuits}.
The barycentric coordinate vector $\bc$ of a simplicial circuit $c$ 
is one of the inputs in the inequality of arithmetic and geometric means \cite[(2.5.5)]{HLP52}, which is equivalent to the following theorem.
\begin{thm}[AGI]
If $c \subset A$ is a simplicial circuit, then the exponential sum
\begin{equation}
\label{eq:agiform}
f(\x) = \<\varphi_A(\x), \bc\>
\end{equation}
is nonnegative on $\R^n$, and it vanishes on the orthogonal complement of the
linear space of the affine span $\Aff(c)$. \qed
\end{thm}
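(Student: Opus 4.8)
The plan is to verify the AGI theorem directly from the inequality of arithmetic and geometric means, using the barycentric coordinate vector $\bc = (-1, \beta_1, \dots, \beta_d)$ with $\sum_i \beta_i = 1$ and $\al_0 = \sum_i \beta_i \al_i$. Writing out \eqref{eq:agiform} gives
\[
f(\x) = -e^{\<\x, \al_0\>} + \sum_{i=1}^d \beta_i \, e^{\<\x, \al_i\>},
\]
so the claim $f(\x) \geq 0$ is precisely the weighted AM--GM inequality applied to the positive reals $e^{\<\x, \al_i\>}$ with weights $\beta_i$: the weighted arithmetic mean $\sum_i \beta_i e^{\<\x, \al_i\>}$ dominates the weighted geometric mean $\prod_i \big(e^{\<\x, \al_i\>}\big)^{\beta_i} = e^{\<\x, \, \sum_i \beta_i \al_i\>} = e^{\<\x, \al_0\>}$. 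Here the crucial input is the affine dependence $\al_0 = \sum \beta_i \al_i$ together with $\sum \beta_i = 1$, which is exactly what the barycentric coordinates \eqref{eq:BarycentricCoordinates} encode; without it the geometric mean would not collapse to a single exponential monomial.

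Next I would identify the vanishing locus. Equality in weighted AM--GM holds if and only if all the terms $e^{\<\x, \al_i\>}$ are equal, i.e. $\<\x, \al_i - \al_j\> = 0$ for all $i, j$. Equivalently, $\x$ is orthogonal to every difference $\al_i - \al_j$, which means $\x$ is orthogonal to the linear span of $\{\al_i - \al_j\}$, i.e. to the direction space of the affine span $\Aff(\bc)$. This is the assertion that $f$ vanishes exactly on the orthogonal complement of (the linear part of) $\Aff(\bc)$. One small point to be careful about: the statement says $f$ ``vanishes on the orthogonal complement of the linear space of the affine span,'' so I should phrase the argument as: on that orthogonal complement all exponential monomials $e^{\<\x,\al_i\>}$ take a common value, hence $f(\x) = \big(-1 + \sum_i \beta_i\big) e^{\<\x,\al_0\>} = 0$; and conversely equality forces $\x$ into that complement. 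I would only assert the direction needed (vanishing on the complement), and optionally remark on the converse.

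I do not expect a genuine obstacle here — the theorem is essentially a repackaging of a classical inequality, and the excerpt explicitly cites \cite[(2.5.5)]{HLP52}. The only things requiring a line of care are: (i) making sure the scaling convention is fixed, so that $\bc$ really is the barycentric coordinate vector \eqref{eqn:CanonicalCircuitVector} with first entry $-1$ and remaining entries summing to $1$ (a different scaling multiplies $f$ by a positive constant and changes nothing); and (ii) correctly translating ``all $e^{\<\x,\al_i\>}$ equal'' into the orthogonality condition, which uses that $\al_0$ lies in the affine span of $\al_1,\dots,\al_d$ so imposing equality among the $\al_1,\dots,\al_d$ terms automatically matches the $\al_0$ term. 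So the proof is short: invoke weighted AM--GM for the nonnegativity, and analyze its equality case for the description of the zero set.
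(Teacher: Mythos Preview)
Your proposal is correct and is exactly the intended argument: the paper does not give its own proof but simply cites \cite[(2.5.5)]{HLP52}, stating that the theorem is equivalent to the weighted AM--GM inequality, which is precisely what you unpack. Your treatment of the equality case, translating ``all $e^{\<\x,\al_i\>}$ equal'' into $\x \perp \Lin(\Aff(\bc))$, is the right way to obtain the vanishing locus statement.
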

The canonical group action $\R^n \times \R^A \rightarrow \R^A$ acts on the agiform 
\eqref{eq:agiform} by a translation of its singular locus.
That is, the singular locus of the exponential sum 
\begin{equation}
\label{eq:TorusAgiform}
g(\x) = \< \varphi_A(\x-\bw), \bc\>,
\end{equation}
which is equal to the real vanishing locus of $g$, is given by
\begin{equation}
\label{eqn:SingularLocus}
\sing(g) = \bw + \Aff(c)^{\perp}.
\end{equation}

\begin{remark}
\label{rem:Normalization}
The torus action can be used to impose assumptions on the location of the singular loci.
For example, if the exponential sum $f(\x)$ has a unique minimizer $\hat \x$, then it is often assumed
that $\hat \x = 0$. In this work, we impose no such assumptions. On some occasions,
we will use the torus action to impose assumptions on the coefficients of $f(\x)$. 
The action $(\bw, g) \mapsto g(\x - \bw)$ corresponds to component-wise multiplication
of the coefficient vector of $g$ by $\varphi(-\bw)$. If we choose an independent
set $\al_0, \dots, \al_n$ (i.e., the vertices of a simplex), then we can always find $\bw$
such that $g(\x - \bw)$ has coefficients
\begin{equation}
\label{eqn:CoefficientAssumptions}
a_0 = a_1 = \dots = a_n = 1.
\end{equation}
This corresponds to restricting $\R^A$ to the affine subspace defined by \eqref{eqn:CoefficientAssumptions}.
\end{remark}

\begin{example}
The first explicit example of a nonnegative polynomial which is not a sum of squares in the literature is the  \struc{\textit{Motzkin polynomial}} from \cite{Mot67}.  Its inhomogeneous form is
\begin{equation}
\label{eqn:Motzkin}
f(z_1, z_2) = 1+z_1^4z_2^2 + z_1^2z_2^4 - 3\, z_1^2z_2^2.
\end{equation}
This is the (polynomial)
simplicial agiform associated to the circuit
\begin{equation}
\label{eq:MotzkinCircuit}
c = \left[\begin{array}{cccc} 1 & 1 & 1 & 1\\ 0 & 4 & 2 & 2 \\ 0 & 2 & 4 & 2 \end{array}\right]
\quad\text{ and } \quad
\bc = \big[1, 1, 1, -3\big]^\top,
\end{equation}
with singular locus $\bw = (1, 1)$. We refer to $c$ as the \struc{\emph{Motzkin circuit}}.
Notice that the coefficients of $f$ are the coordinates of the vector $\bc \in \ker(A)$.
\label{Example:MotzkinPolynomial}
\end{example}

\begin{remark}
Given an exponential sum $g(\x)$ as in \eqref{eq:TorusAgiform}, a formulaic expression
for the singular locus $\bw$ in terms of the coefficients of $g$ and the barycentric coordinate 
vector $\bc$ was given in \cite[\S3.4]{Iliman:deWolff:Circuits}, when studying circuit polynomials.
They also defined an invariant of a polynomial supported on a circuit, called the
\emph{circuit number}, from which one can immediately read off whether the polynomial
is nonnegative or not.
\end{remark}

\subsection{The sonc-cone and reduced sonc-decompositions}

The most general definition of the sonc-cone 
is the following.

\begin{definition}
\label{def:SoncCone}
A \struc{\emph{sonc-decomposition}} of an exponential sum
$f(\x)$ is a decomposition of $f(\x)$ as a positive combination of 
monomials and nonnegative exponential sums
supported on simplicial circuits.
The \struc{\emph{sonc-cone}} $\struc{\Splus_A} \subset \R^A$ consists of all exponential sums $f \in \R^A$ which admits a sonc-decomposition.
\end{definition}

\begin{remark}
By \eqref{eqn:AgiformSingular}, the vanishing locus of an agiform
is an affine space. In particular, if an exponential sum $f(\x)$ admits a sonc-decomposition,
then its vanishing locus is an intersection of affine spaces and, hence, affine.
To find an example of an exponential sum $f(\x)$ which belongs to the sparse
nonnegativity cone $\Pplus_A$, but which does not belong to the sonc-cone $\Splus_A$, it
suffices to find a nonnegative exponential sum whose vanishing locus is not
an affine space. The simplest example is given by the univariate exponential sum
\[
f(x) = (e^x - 1)^2 \, (e^x - 2)^2,
\]
which belongs to $\R^A$ for $A = \{0, 1, 2, 3, 4\}$.
\end{remark}

\begin{example}
	One exponential sum which is nonnegative but not included in the
	sonc-cone is obtained from the \struc{\emph{Robinson-Polynomial}}
	\begin{align*}
		f(z_1, z_2) \ = \ 1 + z_1^6+z_2^6-(z_1^4z_2^2+z_1^2z_2^4+z_1^4+z_2^4+z_1^2+z_2^2)+3z_1^2z_2^2,
	\end{align*}
	which was the second example of a polynomial that is nonnegative but not a sum of squares  \cite{Robinson}.
	All terms of $f(\bz)$ with negative coefficients have exponent vectors that are located in the boundary of the Newton polytope.
In particular, the point $(2,2)$ is not a vertex of any simplicial circuit contained in $A$.
It follows (see, e.g., \cite{Chandrasekaran:Murray:Wierman}) that $f(\bz)$ admits a sonc-decomposition if and only if the polynomial
\[
g(\bz) = f(\bz) - 3z_1^2z_2^2
\]
admits a sonc-decomposition. But $g(1, 1) = -3$, so that
$g(\bz)$ isn't even nonnegative.
\end{example}

Before we continue, let us state a few important results
which appear---though stated differently---in the contemporary literature.
\begin{thm}[The ``Sonc = Sage'' Theorem]
\label{thm:OneNegativeTerm}
Let $f \in \R^A$ be an exponential sum which has at most one negative coefficient.
Then, $f \in \Pplus_A$ if and only if $f\in \Splus_A$.
\end{thm}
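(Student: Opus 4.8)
The plan is to prove the nontrivial direction, namely that if $f \in \Pplus_A$ has at most one negative coefficient, then $f$ admits a sonc-decomposition. The reverse implication is immediate: every monomial and every simplicial agiform is nonnegative on $\R^n$ by the AGI theorem, and a positive combination of nonnegative functions is nonnegative, so $\Splus_A \subseteq \Pplus_A$ always.

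For the forward direction, first I would dispose of the trivial case: if $f$ has no negative coefficient, then $f$ is itself a positive combination of monomials, hence trivially in $\Splus_A$. So assume $f$ has exactly one negative coefficient, say at the exponent $\al_0 \in A$; write $f(\x) = \sum_{\al \in A} a_\al\, e^{\langle \x, \al\rangle}$ with $a_{\al_0} < 0$ and $a_\al \geq 0$ for $\al \neq \al_0$. The key structural observation is that nonnegativity of $f$ forces $\al_0$ to lie in the convex hull of the exponents carrying positive coefficients: if $\al_0$ were not in that convex hull, one could separate it by an affine functional $\ell$ with $\ell(\al_0) > \ell(\al)$ for all $\al$ with $a_\al > 0$, and then letting $\x$ run along the direction dual to $\ell$ (scaled to $+\infty$) makes the term $a_{\al_0} e^{\langle \x, \al_0\rangle}$ dominate all others, driving $f(\x) \to -\infty$ — contradiction. (One also has to note that $a_{\al_0} \neq 0$ reduces us to this case, and that if the positive part is supported on a lower-dimensional affine subspace we restrict attention there.) Hence $\al_0 \in \conv(A \setminus \{\al_0\})$, and by Lemma~\ref{lem:SimplexCircuitConstruction} (or directly by Carathéodory) there is a simplicial circuit $\bc \subset A$ with relative interior point $\al_0$ whose other vertices carry positive coefficients.

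Now the main step: I would show one can subtract an appropriate nonnegative multiple of the agiform $\langle \varphi_A(\x - \bw), \bc\rangle$ from $f$ so that the difference is still nonnegative but has strictly fewer terms, then induct on $|A|$. The translation parameter $\bw$ and the scaling are chosen to match $f$ at its minimum: the AGI agiform attains the value zero precisely on the affine subspace $\bw + \Aff(\bc)^\perp$, and its shape near there is governed by the barycentric weights. Concretely, take a point $\x^*$ where $f$ achieves its infimum restricted to the relevant subspace (or a limiting argument if the infimum is not attained), and calibrate the circuit number / translation so that the scaled agiform $g$ satisfies $g \leq f$ pointwise with equality forced somewhere that kills one of the positive coefficients of $f - g$. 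This is essentially the classical AM–GM style reduction: the quantity $a_{\al_0}$ (the unique negative coefficient) is exactly used up against the positive coefficients on $\bc$, and the extremal configuration makes $f - g$ lose a monomial. After the subtraction, $f - g$ again has at most one negative coefficient (in fact its negative coefficient sits at $\al_0$ again, with smaller absolute value, or vanishes), so the induction hypothesis applies; adding back the single agiform $g$ yields a sonc-decomposition of $f$.

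The hardest part will be making the calibration of $g$ rigorous, i.e.\ proving that there exist $\bw$ and a scaling $t > 0$ with $t\langle \varphi_A(\x-\bw),\bc\rangle \leq f(\x)$ for all $\x$, with the extremal $t$ producing a new vanishing among the coefficients — this is where one genuinely invokes the weighted AM–GM inequality in the form of the AGI theorem and an optimization argument over the torus action, and one must handle carefully the degenerate situations (infimum not attained, $f$ already vanishing identically on $\Aff(\bc)^\perp$, or $f$ supported on a proper face). I would organize this via a compactness/continuity argument on the parameter $(\bw, t)$, using that $f$ restricted to any line in the direction of $\Aff(\bc)$ is a positive-coefficient exponential sum dominating the single negative term, so the supremal admissible $t$ is finite and positive, and at that supremum the inequality $f - tg \geq 0$ is tight, forcing a coefficient of $f - tg$ to vanish by a local analysis at the touching point. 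With that lemma in hand, the induction is routine.
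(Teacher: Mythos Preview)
The paper does not actually prove this theorem: its proof consists of citations to Reznick, Wang, and Murray--Chandrasekaran--Wierman, with the remark that the result can also be recovered from the methods of the paper. Your outline follows the right architecture --- force $\al_0$ into the convex hull of the positive support, peel off a simplicial agiform through $\al_0$, and induct on the support --- and this is indeed the strategy in those references. But your calibration of the agiform has a genuine gap. You take the supremal $t$ with $f - t\, g_{\bw} \ge 0$ pointwise and assert that ``at that supremum the inequality $f-tg\ge 0$ is tight, forcing a coefficient of $f-tg$ to vanish,'' and that the remainder again has at most one negative coefficient, at $\al_0$. Neither claim holds. Take $A=\{0,1,2,3,4\}$ and
\[
f(z)=1+2z-6z^{2}+2z^{3}+z^{4}=(z-1)^{2}(z^{2}+4z+1),
\]
nonnegative on $\R_+$ with its unique zero at $z=1$, and let $g(z)=\tfrac12 z-z^{2}+\tfrac12 z^{3}=\tfrac12 z(1-z)^2$ be the agiform on the circuit $\{1,2,3\}$. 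Because $f(1)=0$, any admissible $g_{\bw}$ must vanish at $z=1$ as well, so $\bw$ is forced; then $f-tg=(z-1)^{2}\bigl[z^{2}+(4-\tfrac{t}{2})z+1\bigr]$, the supremal admissible $t$ is $12$, and $f-12g=(1-z)^{4}$ has coefficient vector $(1,-4,6,-4,1)$: two negative coefficients, none vanishing, and neither negative coefficient sits at $\al_0=2$. Your induction breaks here.

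The mechanism that does work is to first reduce to the boundary case (decrease $a_{\al_0}$ until $f$ acquires a zero; the slack is a nonnegative monomial), translate the zero to the origin, and observe that the coefficient vector $\ba$ then lies in $\ker(A)$ with its unique negative entry at the interior point $\al_0$. One now inducts purely on coefficients inside $\ker(A)$: choose a simplicial circuit $\bc$ with interior point $\al_0$ (Lemma~\ref{lem:SimplexCircuitConstruction}) and subtract the largest multiple $\varepsilon\bc$ keeping every entry at $\al\ne\al_0$ nonnegative; a vertex entry then vanishes by construction, and the positive support strictly shrinks. The key point you are missing is that $\ba-\varepsilon\bc\in\ker(A)$ again has a single negative entry at $\al_0$ and is \emph{automatically} nonnegative as an exponential sum, directly by the weighted AM--GM inequality applied to the convex combination $\al_0=\sum_{\al\ne\al_0}\frac{(\ba-\varepsilon\bc)_\al}{-(\ba-\varepsilon\bc)_{\al_0}}\,\al$ --- no pointwise comparison $g\le f$ is needed or available.
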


\begin{proof}
This result is in its original form by Reznick \cite[Theorem 7.1]{Rez89},
extended to the general case by Wang \cite[Theorem~1.1]{Wang:AtMostOneNegativeTerm} and later independently by Chandrasekaran, Murray, and Wierman \cite[Theorem 4]{Chandrasekaran:Murray:Wierman}.
This result can also be recovered using the methods developed in this paper.
\end{proof}

\begin{prop}
\label{prop:SoncInclusionTheorem}
If an exponential sum $f\in \R^A$ admits a sonc-decomposition, then $f$ admits a sonc-de\-compo\-sition such that all monomials and all simplicial circuits appearing are contained in $A$.
\end{prop}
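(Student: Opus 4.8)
The plan is to show that any sonc-decomposition of $f$ can be ``pruned'' so that only monomials $e^{\<\x,\al\>}$ with $\al\in A$ and only simplicial agiforms supported on circuits $\bc\subset A$ survive, by tracking which exponents can appear with nonzero coefficient. Start from an arbitrary sonc-decomposition
\[
f = \sum_{i} t_i\, e^{\<\x,\be_i\>} + \sum_j s_j\, g_j(\x),
\]
with $t_i,s_j>0$, each $\be_i\in\widecheck\R^n$, and each $g_j$ a nonnegative exponential sum supported on a simplicial circuit $\bd_j\subset\widecheck\R^n$. Let $B$ be the union of $A$ with all the exponents $\be_i$ and all the points of all the circuits $\bd_j$; then $f\in\R^A\subset\R^B$, and the decomposition expresses $f$ as an element of the sonc-cone $\Splus_B$. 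The goal is to produce a decomposition living inside $\R^A$.

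The key step is an extreme-ray / combinatorial pruning argument on the Newton polytope. First observe $\New(f)\subseteq\New(A)$, since $f\in\R^A$. Now suppose some $\be_i$ or some vertex of some $\bd_j$ lies outside $\cN(A)$: take such an exponent $\al$ that is a vertex of $\cN(B)$ and not in $\cN(A)$. On the right-hand side, the coefficient of $e^{\<\x,\al\>}$ is a sum of positive contributions (from monomials $e^{\<\x,\al\>}$ and from agiforms $g_j$ having $\al$ as a \emph{vertex} of $\bd_j$) and, by the AGI theorem and \eqref{eqn:CanonicalCircuitVector}, negative contributions only from agiforms $g_j$ whose circuit $\bd_j$ has $\al$ as its \emph{relative interior point}. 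But $\al$ being a vertex of $\cN(B)$ forces it to be a vertex of every $\cN(\bd_j)$ containing it, so no agiform contributes negatively at $\al$; hence the coefficient of $e^{\<\x,\al\>}$ in $f$ is strictly positive, contradicting $\al\notin\New(f)$. Therefore $\cN(B)=\cN(A)$, and every $\be_i$ lies in $\cN(A)$, and every circuit $\bd_j$ satisfies $\cN(\bd_j)\subseteq\cN(A)$.

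Next we must handle the interior exponents: an $\be_i$ or a circuit point might lie in $\cN(A)$ but not be an element of $A$. Here the idea is to subdivide each offending circuit into circuits supported on $A$. Since $\cN(\bd_j)\subseteq\cN(A)$ but $\bd_j\not\subset A$, pick a point $\al^\star$ of $\bd_j$ with $\al^\star\notin A$. If $\al^\star$ is the interior point of $\bd_j$, then $\al^\star\in\cN(A')$ for some face, and one rewrites $g_j$ — which is, up to torus translation \eqref{eq:TorusAgiform}, a barycentric-coordinate agiform — by expressing $\al^\star$ via barycentric coordinates relative to points of $A$ (Lemma~\ref{lem:SimplexCircuitConstruction} and the triangulation argument in its proof supply exactly such circuits inside $A$); iterating replaces $g_j$ by a positive combination of agiforms on circuits of $A$ plus genuine monomials. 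If instead $\al^\star$ is a vertex of $\bd_j$, one first notes that by the vertex argument above $\al^\star\in\cN(A)$, writes $\al^\star$ as a convex combination of points of $A$, and uses this to split $g_j$ against the generating agiforms of $\cR_A$, invoking full-dimensionality (Proposition~\ref{prop:BasisSimplexCircutis}) and the extreme-ray description (Theorem~\ref{thm:EdgeGenerators}) to keep all coefficients nonnegative. In either case a suitable monovariate potential — e.g. the number of exponents appearing that lie outside $A$ — strictly decreases, so the process terminates.

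The main obstacle is the interior-point reduction in the last paragraph: unlike the vertex case, rewriting an agiform whose \emph{relative interior point} is not in $A$, or splitting off a \emph{vertex} not in $A$, requires genuinely re-expressing a given agiform as a nonnegative combination of agiforms over $A$, and one must verify that all the coefficients produced stay nonnegative and that no new ``bad'' exponents are introduced in the process. The cleanest route is to reduce to $\cR_A$ and its edge generators via Theorem~\ref{thm:EdgeGenerators}, handling the torus translation \eqref{eq:TorusAgiform} separately since it only moves singular loci and does not affect supports; but making the induction/termination bookkeeping airtight — in particular choosing the potential function so that it provably decreases at every step regardless of which sub-case occurs — is where the real work lies.
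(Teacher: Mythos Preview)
The paper does not give its own proof of this proposition: it simply cites \cite[Theorem~2]{Chandrasekaran:Murray:Wierman}. So there is no ``paper's approach'' to compare against, beyond noting that the cited proof proceeds via convex duality and relative entropy programming, not via combinatorial pruning.

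Your first step, the vertex argument showing $\cN(B)=\cN(A)$, is correct and is the easy half. The second step, however, has a genuine gap that is more serious than your closing paragraph suggests. The per-agiform rewriting you propose is impossible in principle, not merely delicate. If $g_j$ is an agiform whose relative interior point $\al^\star$ lies in $\cN(A)\setminus A$, then the coefficient of $e^{\<\x,\al^\star\>}$ in $g_j$ is strictly negative, whereas any nonnegative combination of monomials from $A$ and agiforms on simplicial circuits of $A$ has coefficient exactly zero at $\al^\star$ (since $\al^\star\notin A$). Hence $g_j$ alone can never be rewritten in the way you describe. The same obstruction applies when $\al^\star$ is a vertex of $\bd_j$: the coefficient there is strictly positive, again unmatched by anything supported on $A$. (In fact the vertex-only case is vacuous: if $\al^\star\notin A$ appears only with nonnegative sign in every term of the decomposition, cancellation to zero forces it not to appear at all, so whenever a foreign exponent survives it must be the interior point of some $\bd_j$.)

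Any correct argument must therefore be global: one has to exploit that the \emph{total} coefficient at each $\al^\star\notin A$ vanishes, and redistribute mass among all terms simultaneously. This is exactly what the duality-based proofs in \cite{Chandrasekaran:Murray:Wierman} and \cite{Wang:AtMostOneNegativeTerm} accomplish, and there does not appear to be a purely combinatorial shortcut of the kind you sketch. Invoking Theorem~\ref{thm:EdgeGenerators} and Proposition~\ref{prop:BasisSimplexCircutis} does not help here: those results concern the linear structure of $\cR_A\subset\ker(A)$, whereas the obstruction above is about a single coordinate outside $\ker(A)$ altogether.
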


\begin{proof}
The statement is the relaxation of \cite[Theorem 2]{Chandrasekaran:Murray:Wierman}.
\end{proof}

\begin{prop}
\label{prop:Codimension2}
Let $A$ be a support set of codimension $m \leq 2$.
Then, $\Pplus_A=\Splus_A$.
\end{prop}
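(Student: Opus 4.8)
The plan is to reduce the statement to the ``Sonc $=$ Sage'' Theorem (Theorem~\ref{thm:OneNegativeTerm}) by showing that, in codimension $m \leq 2$, every $f \in \Pplus_A$ can be written as a sum of exponential sums each having at most one negative term. The containment $\Splus_A \subseteq \Pplus_A$ is immediate from the AGI theorem (agiforms are nonnegative) together with the nonnegativity of monomials, so the content is the reverse inclusion. First I would dispose of the trivial cases: if $m = 0$ then $\ker(A) = 0$, $\R^A \simeq \R^{1+d}$ has dimension $1+d$ but $\Pplus_A$ is then just the nonnegative orthant of monomials and there is nothing to prove; if $m = 1$ then $\ker(A)$ is spanned by a single vector, and one checks that either that vector has a simplicial signature (so $A$ is a simplicial circuit and $\Pplus_A = \Splus_A$ by Reznick's original theorem, i.e.\ Theorem~\ref{thm:OneNegativeTerm}) or it does not, in which case $\Pplus_A$ contains no nonnegative exponential sum with genuinely negative part and again the statement is vacuous on the boundary. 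The substance is the case $m = 2$.

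For $m = 2$, let $f \in \Pplus_A$ and write $\ba = \ba^+ - \ba^-$ where $\ba^-$ collects the (finitely many) coordinates on which $f$ has a negative coefficient; let $A^- \subseteq A$ be the corresponding support. If $|A^-| \leq 1$ we are done by Theorem~\ref{thm:OneNegativeTerm}, so assume $|A^-| \geq 2$. The key structural point is that $\ker(A)$ is only two-dimensional, which forces the negative coordinates of $\ba$ to be highly constrained. I would argue as follows: pick two of the negative-coefficient exponents, say $\al_i, \al_j \in A^-$; using Lemma~\ref{lem:SimplexCircuitConstruction}-type reasoning each such exponent, if it lies in the interior of $\cN(A)$ (which it must, since a polynomial nonnegative on $\R^n$ cannot have a negative coefficient on a vertex monomial of its Newton polytope), is the relative interior point of some simplicial circuit contained in $A$. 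Because $\dim \ker(A) = 2$, the barycentric coordinate vectors of the circuits sitting over $\al_i$ and over $\al_j$, together with any third circuit, satisfy a linear relation; this lets me split off an agiform supported over one of them and pass to the residual exponential sum, which still lies in $\Pplus_A$ (one must check this) and has strictly fewer negative terms or strictly smaller negative support. Iterating, I peel off agiforms until at most one negative coefficient remains, at which point Theorem~\ref{thm:OneNegativeTerm} finishes the job, and Proposition~\ref{prop:SoncInclusionTheorem} guarantees the resulting decomposition uses only circuits and monomials from $A$.

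The main obstacle is making the ``peel off an agiform and stay nonnegative'' step rigorous: subtracting a nonnegative agiform $\<\varphi_A(\cdot),\bc\>$ from a nonnegative $f$ need not leave something nonnegative unless the coefficient is chosen carefully, and one must control simultaneously the sign of the residual at the exponent $\al_i$ being eliminated and the preservation of nonnegativity on all of $\R^n$. I expect the clean way to handle this is to work on the level of the $2$-dimensional kernel directly: the slice $\{\ba' : f - \ba' \in \Pplus_A,\ \ba' \text{ supported on a fixed circuit } \bc\}$ is an interval, and one takes $\ba'$ to be the endpoint at which a new coefficient vanishes — this is essentially the face-decomposition argument underlying $\Pplus_A$ being generated, in codimension $\leq 2$, by its ``one negative term'' pieces. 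An alternative, possibly shorter, route is to invoke directly the known classification of nonnegativity certificates in low codimension (the statement is attributed in the literature to the circuit setting of~\cite{Iliman:deWolff:Circuits} and its successors); but the self-contained argument above, via Theorem~\ref{thm:OneNegativeTerm} and the two-dimensionality of $\ker(A)$, is the one I would write out.
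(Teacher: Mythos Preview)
The paper's own proof is a two-line citation: it observes that $m\le 2$ forces either $A$ to have at most one non-vertex, or $\cN(A)$ to be a simplex with exactly two non-vertices, and then invokes \cite[Corollary~21]{Chandrasekaran:Murray:Wierman} for both cases. There is no self-contained argument to compare against; the paper treats this proposition as a known fact.

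Your cases $m=0,1$ are fine, and your combinatorial picture for $m=2$ is correct. The genuine gap is exactly where you locate it, and your proposed fix does not close it. Taking $T=\sup\{t\ge 0: f-t\,g_{\bc}\in\Pplus_A\}$ for a fixed agiform $g_{\bc}$ lands $f'=f-Tg_{\bc}$ on the boundary of $\Pplus_A$, which means $f'$ acquires a \emph{real zero}, not that a \emph{coefficient} of $f'$ vanishes; so the number of negative terms need not drop and the induction does not run. (If $f$ itself is already on the boundary, the interval can collapse to $\{0\}$.) Allowing the singular point $\bw$ of $g_{\bc}$ to vary does not obviously help: you are then optimizing over a two-parameter family and must simultaneously match a coefficient and preserve global nonnegativity, which is precisely the nontrivial content of the cited result. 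The proof in \cite{Chandrasekaran:Murray:Wierman} for the simplex-with-two-interior-points case does not proceed by peeling; it uses a relative-entropy duality argument to produce the splitting $f=f_{\al}+f_{\be}$ with each summand nonnegative and having a single negative term. Reproducing that here would amount to reproving their corollary, so the honest route---and the one the paper takes---is simply to cite it, which you list as your ``alternative'' option.
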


\begin{proof}
Either $A$ is has at most one interior point,
or the Newton polytope $\cN$ is a simplex and $A$ has two non-vertices. Both cases are covered by \cite[Corollary 21]{Chandrasekaran:Murray:Wierman}.
\end{proof}

The exponential sums supported on circuits appearing in Definition~\ref{def:SoncCone} need not be singular. However, there is no loss of generality
in assuming that they are.

\begin{definition}
A \struc{\emph{reduced sonc-decomposition}} of  $f \in \R^A$ is a decomposition
\begin{equation}
\label{eqn:ReznickSum}
f(\x)  = \quad \sum_{c \in C} t_{c} \, \<\varphi_A(\x - \bw_{c}) ,  \bc\> \quad + \quad \sum_{\al \in A} t_{\al} \, e^{\<\x, \al\>}
\end{equation}
where $t_{c}, t_{\al} \geq 0$, and $\bw_{\bc} \in \R^n$.
\end{definition}

Notice that the simplicial agiforms appearing in a reduced sonc-decomposition
are all singular.
Let us generalize a result of Iliman and the second author \cite{Iliman:deWolff:Circuits}.

\begin{lemma}
\label{lem:CircuitSoncDecomposition}
Let $c$ be a simplicial circuit, let $f \in \Pplus_{c}$, 
and let $\al \in c$ be arbitrary. 
Then, there are constants $t, a \geq 0$, and $\bw \in \R^n$, such that
\[
f(\x) = t\,\<\varphi_{c}(\x-\bw), \bc\>  + a\,e^{\<\x, \al\>}. 
\]
\end{lemma}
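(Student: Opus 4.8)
The goal is: given a simplicial circuit $\bc$, a nonnegative $f \in \Pplus_A$, and a distinguished point $\al \in \bc$, write $f$ (restricted to the relevant monomials, presumably — actually the statement is about all of $f$, so $f$ must be supported on $\bc$, but let me re-read... the statement says $f \in \Pplus_A$ and concludes $f(\x) = t\langle\varphi_\bc(\x-\x_\bc),\bc\rangle + a e^{\langle\x,\al\rangle}$; this only makes sense if $\supp(f) \subseteq \bc$, so I will assume that hypothesis is implicit or that the lemma is applied to the $\bc$-supported part). Let me write the plan accordingly.

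Assume $\supp(f) \subseteq \bc$, and write $\bc = (-1, \beta_1, \dots, \beta_r)$ in barycentric coordinates with relative interior point $\al_0$ (after relabeling so $\al$ is either $\al_0$ or some vertex). The plan is to split into the two cases according to whether $\al = \al_0$ (the interior point of the circuit) or $\al$ is one of the vertices. The key tool is the AGI theorem together with the homogeneity/torus action \eqref{eq:TorusAgiform}–\eqref{eqn:SingularLocus}: translating the agiform by $\bw$ multiplies each coordinate $a_i$ of $\bc$ by $e^{-\langle\bw,\al_i\rangle}$, so the family of singular circuit sums supported on $\bc$ is exactly $\{(t e^{-\langle\bw,\al_0\rangle}\cdot(-1),\, t e^{-\langle\bw,\al_1\rangle}\beta_1,\dots)\mid t>0,\bw\in\R^n\}$, which sweeps out precisely the vectors $\bt = (t_0, t_1,\dots,t_r)$ with $\sgn(\bt)=(-,+,\dots,+)$ satisfying the single multiplicative relation coming from $\bc \in \ker(A)$ (equivalently, the ``circuit number'' condition $\prod_i (t_i/\beta_i)^{\beta_i} = t_0$, up to the exact normalization). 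First I would make this parametrization precise as a preliminary observation.

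Next, write $f = \langle\varphi_\bc(\x),\ba\rangle$ with $\ba=(a_0,a_1,\dots,a_r)$. Since $f\geq 0$ on $\R^n$, evaluating on the subspace $\Aff(\bc)^\perp$ (where all monomials in $\bc$ are constant after translation — more carefully, restricting to a suitable one-parameter family) forces the combinatorial sign constraint: at most the coefficient $a_0$ (the one at the interior point $\al_0$) can be negative, and if any vertex coefficient $a_j$ were $\le 0$ then sending the corresponding monomial to dominate shows $f$ would have to be identically... — the clean statement is $f\in\Pplus_\bc$ with $\supp f\subseteq\bc$ implies $a_i\ge 0$ for $i\ge 1$ and (by Theorem \ref{thm:OneNegativeTerm}, the Sonc = Sage theorem, applied to the circuit $\bc$ which has at most one negative term) $f$ has a sonc-decomposition on $\bc$. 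Since $\bc$ is a single simplicial circuit, that decomposition consists of one translated agiform on $\bc$ plus nonnegative monomials at points of $\bc$.

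Now comes the case distinction, which is the main content. If $\al = \al_0$ is the interior point: I want $a = t_\al$ to absorb the monomial at $\al_0$, i.e. pick $t$ maximal such that $\ba - t\bc'$ (for the appropriately translated circuit vector $\bc'$) still has a valid agiform form with the $\al_0$-coordinate handled — actually here the natural move is: choose the translation $\bw$ and scalar $t$ so that the translated agiform matches $f$ on all vertex coordinates $a_1,\dots,a_r$ exactly (this is $r$ equations: $t e^{-\langle\bw,\al_j\rangle}\beta_j = a_j$, giving $\langle\bw,\al_j\rangle - \langle\bw,\al_0\rangle$-type quantities, solvable because $\al_1-\al_0,\dots,\al_r-\al_0$ span); then the residual $f - t\langle\varphi_\bc(\x-\bw),\bc\rangle$ is a single monomial at $\al_0$ with coefficient $a_0 - t\cdot(-e^{-\langle\bw,\al_0\rangle})= a_0 + t e^{-\langle\bw,\al_0\rangle} \ge 0$ since $a_0\ge -(\text{something})$... — here I need the AGI-type inequality / nonnegativity of $f$ to guarantee the leftover coefficient $a$ at $\al_0$ is $\ge 0$, which is exactly where $f \in \Pplus$ is used essentially. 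If instead $\al = \al_j$ is a vertex: now I only want to match $f$ on the $r-1$ other vertex coordinates and the interior coordinate — but that is $r$ equations again in the $n+1$ unknowns $(\bw, t)$... the cleaner route is to use the full reduced sonc-decomposition of $f$ on $\bc$ (one agiform plus nonnegative monomials at various points of $\bc$), then argue that all those leftover monomials can be pushed into the single monomial at $\al$ by further adjusting the agiform's translation — using that moving $\bw$ trades coefficient mass between the $\al$-monomial and the agiform monotonically.

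The main obstacle I expect is the bookkeeping in the vertex case: showing that a reduced sonc-decomposition on the circuit $\bc$, which a priori leaves nonnegative monomial remainders at \emph{several} points of $\bc$, can be massaged (by varying the agiform's singular locus along the one relevant direction) into one with a remainder supported \emph{only} at the prescribed $\al$. This requires a continuity/monotonicity argument: as $\bw$ moves, the coefficients of the unique translated agiform matching $f$ at a maximal set of coordinates vary continuously, and one shows there is a choice making all but one remainder vanish. I would likely phrase this via a one-parameter family $\bw(\tau)$ along the direction dual to $\al$ versus the opposite vertices, track the remainder coefficients, and apply an intermediate-value argument; the nonnegativity $f\in\Pplus_A$ guarantees the remainders stay $\ge 0$ throughout. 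The interior-point case is comparatively routine once the parametrization of singular agiforms on $\bc$ is set up.
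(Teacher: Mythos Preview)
Your approach is considerably more hands-on than the paper's, which dispatches the lemma in a single sentence: ``This is an immediate corollary of Theorem~\ref{thm:OneNegativeTerm}.'' The paper simply invokes the Sonc\,$=$\,Sage theorem on the circuit $\bc$ and leaves to the reader the (standard, Iliman--de Wolff style) observation that on a single simplicial circuit this pins down the decomposition as one translated agiform plus one monomial at the prescribed $\al$.

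Your direct coefficient-matching argument is correct in spirit and more self-contained, but you make the vertex case harder than it is. The move you carry out for the interior point works \emph{uniformly}: since a simplicial circuit is a minimally affinely dependent set, removing \emph{any} point $\al\in\bc$ leaves an affinely independent set $\bc\setminus\{\al\}$ of size $r$; the $r$ equations $t\,c_i\,e^{-\<\bw,\al_i\>}=a_i$ for $\al_i\in\bc\setminus\{\al\}$ are then a consistent linear system in $(\log t,\bw)$, solvable provided the signs match (i.e.\ the interior coefficient of $f$ is negative and the vertex coefficients positive, which is the implicit full-support hypothesis you already flagged). Having matched those $r$ coefficients, the identity $f(\x)=t\,\<\varphi_\bc(\x-\bw),\bc\>+a\,e^{\<\x,\al\>}$ holds for the residual $a$, and evaluating at $\x=\bw$ gives $a=f(\bw)\,e^{-\<\bw,\al\>}\ge 0$ directly from $f\in\Pplus$. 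No one-parameter family, monotonicity, or intermediate-value argument is needed; that proposed detour is precisely where your plan becomes vague, and it can be dropped entirely.

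In short: the paper's route is a one-line appeal to a black box; yours is an explicit construction that avoids the black box but should be streamlined by treating the interior and vertex cases symmetrically.
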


\begin{proof}
The statement follows from Theorem~\ref{thm:OneNegativeTerm}.
\end{proof}

\begin{prop}
An exponential sum $f \in \R^A$ admits a sonc-decomposition if and only
if it admits a reduced sonc-decomposition.
\end{prop}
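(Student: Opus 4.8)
The plan is to prove both implications. The forward direction is immediate: a reduced sonc-decomposition as in \eqref{eqn:ReznickSum} expresses $f$ as a positive combination of the singular agiforms $\<\varphi_A(\x-\x_{\bc}), \bc\>$ (which are nonnegative by the AGI theorem, and are supported on the simplicial circuits $\bc \subset A$ by definition) together with the monomials $e^{\<\x,\al\>}$, so $f$ admits a sonc-decomposition in the sense of Definition~\ref{def:SoncCone}. Hence $f \in \Splus_A$.

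For the converse, suppose $f$ admits a sonc-decomposition. By Proposition~\ref{prop:SoncInclusionTheorem} we may assume every monomial and every simplicial circuit appearing is already contained in $A$, so that
\[
f(\x) = \sum_{i} t_i\, g_i(\x) + \sum_{\al \in A} t_\al\, e^{\<\x, \al\>},
\]
where each $t_i, t_\al \geq 0$ and each $g_i \in \R^A$ is a nonnegative exponential sum supported on a simplicial circuit $\bc_i \subset A$. It now suffices to rewrite each individual summand $t_i g_i$ as a reduced sonc-decomposition: a sum of reduced sonc-decompositions is again one (collecting the monomial terms coordinate-wise). So fix one nonnegative $g = g_i$ supported on a simplicial circuit $\bc$. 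Since $g \in \R^{\bc} \subset \R^A$ and $g$ is nonnegative on $\R^n$, and $g$ has exactly one negative coefficient or none (being supported on a simplicial circuit, whose signature is $\{1,k\}$; if the coefficient at the interior point is nonnegative then $g$ is a sum of monomials and we are done), we may apply Lemma~\ref{lem:CircuitSoncDecomposition} with any choice of $\al \in \bc$: there exist $t, a \geq 0$ and $\x_{\bc} \in \R^n$ with
\[
g(\x) = t\, \<\varphi_{\bc}(\x - \x_{\bc}), \bc\> + a\, e^{\<\x, \al\>},
\]
and since $\varphi_{\bc}$ is just the restriction of the coordinates of $\varphi_A$ to the indices in $\bc$, the term $\<\varphi_{\bc}(\x-\x_{\bc}),\bc\>$ equals $\<\varphi_A(\x-\x_{\bc}),\bc\>$ with $\bc$ viewed as a vector in $\ker(A)$. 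This is exactly a reduced sonc-decomposition of $g$ (with a single circuit term and a single monomial term). Scaling by $t_i \geq 0$ preserves this form, and summing over $i$ and absorbing the leftover monomials $t_\al e^{\<\x,\al\>}$ yields a reduced sonc-decomposition of $f$ of the shape \eqref{eqn:ReznickSum}.

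The only subtlety — and the step I would state carefully rather than wave at — is the reduction of each circuit-supported nonnegative summand to a \emph{singular} agiform plus a monomial. This is precisely the content of Lemma~\ref{lem:CircuitSoncDecomposition}, which in turn rests on the ``Sonc $=$ Sage'' Theorem~\ref{thm:OneNegativeTerm}: the point is that a nonnegative exponential sum on a simplicial circuit differs from a scalar multiple of a \emph{singular} agiform (on that circuit, with a suitably translated singular locus) only by a nonnegative monomial at $\al$. Granting that lemma, no real obstacle remains; the rest is the bookkeeping of combining finitely many reduced decompositions into one, which is routine because the monomial parts add and the circuit parts simply accumulate as additional terms in the first sum of \eqref{eqn:ReznickSum}.
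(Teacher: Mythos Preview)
Your argument follows essentially the same route as the paper's: invoke Proposition~\ref{prop:SoncInclusionTheorem} to restrict to circuits and monomials in $A$, then apply Lemma~\ref{lem:CircuitSoncDecomposition} to replace each nonnegative circuit-supported summand by a singular agiform plus a monomial. There is, however, one small gap. The reduced form \eqref{eqn:ReznickSum} is indexed by $\bc \in C$, with a \emph{single} scalar $t_{\bc}$ and a \emph{single} singular locus $\x_{\bc}$ per circuit. After your procedure, two distinct summands $g_i$ and $g_j$ may be supported on the same circuit $\bc$ and yield singular agiforms with \emph{different} singular loci $\x_{\bc}^{(i)} \neq \x_{\bc}^{(j)}$; your remark that ``the circuit parts simply accumulate as additional terms'' does not address how to collapse these into one term of the required shape. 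The paper handles exactly this point in its final sentence: the sum of the two agiforms is again a nonnegative exponential sum supported on $\bc$, so Lemma~\ref{lem:CircuitSoncDecomposition} applies once more and replaces it by a single singular agiform plus a monomial. With that one additional application of the lemma, your proof is complete and coincides with the paper's.
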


\begin{proof}
By Proposition~\ref{prop:SoncInclusionTheorem}, if $f \in \R^A$ admits a sonc-decomposition, then it admits a sonc-decomposition whose terms are contained in $\R^A$. By Lemma~\ref{lem:CircuitSoncDecomposition},
we can assume that all  nonnegative exponential sums supported on circuits appearing in the decomposition are singular. 
If some simplicial circuit $c$ appears twice in the decomposition,
then the sum of the corresponding agiforms is a nonnegative exponential sum supported on $c$, to which we can yet again apply
Lemma~\ref{lem:CircuitSoncDecomposition}.
\end{proof}

Henceforth, all sonc-decomposition in this work are assumed to be reduced.
Notice that \eqref{eqn:ReznickSum} is a parametric representation
of the sonc-cone, on the finite dimensional parameter space
whose coordinates are the scaling factors $\bt$ and the singular loci $\bw_{\bc}$.

\begin{example}
\label{ex:CircuitSoncDecompositions}
Consider the polynomial sonc-decomposition
\[
 f(z) = 2(1 - z)^2 + 2z^2,
\]
where the first terms is a simplicial (polynomial) agiform
supported on the circuit $c = \{0, 1, 2\}$, and the second term is a
monomial with exponent $2$.
The reader can verify that
\[
f(z)  = 2\big(1 - \sqrt{2} z\big)^2 + 4\big(\sqrt{2} - 1\big)z
= (2z - 1)^2 + 1.
\]
That is, $f(z)$ also admits sonc-decompositions where the monomial
term has exponent 1 respectively 0. In particular, there is no minimal
reduced sonc-decomposition of $f(z)$ in the sense that the index set
(which consists of simplicial circuits and exponents) of the composition is minimal.
We show later that a univariate exponential sum has a minimal sonc-decomposition
if and only if it belongs to the boundary of the sonc-cone. 
That said, in many cases we prefer to work with a (reduced) sonc-decomposition
that contains as many terms as possible. For example, we also have that
\[
f(z) = \frac{(14 z - 9)^2}{56}  + \frac{z^2}2 + \frac z2 + \frac{31}{56}.
\]
which is a sonc-decomposition that contains terms
indexed by the circuit $c$ as well as all exponents $\al \in c$.
We say that the \struc{\emph{sonc-support}} of $f(z)$ 
is $\{c, 0, 1, 2\}$. But we are getting ahead of ourselves;
sonc-supports is the topic of the subsequent section.
\end{example}

\section{Sonc-supports}
\label{sec:SoncSupports}

Recall that $C$ denotes the set of all simplicial circuits
contained in the support set $A$.
In general, sonc-decompositions of an exponential sum $f \in \Splus_A$ are not unique.
In particular, the exponential sum $f$ does not determine the the set of monomials and simplicial circuits which appear in
the sonc-decomposition \eqref{eqn:ReznickSum} with a positive coefficient.

\begin{definition}
\label{def:SoncSupport}
The \struc{\emph{sonc-support}} of an exponential sum $f\in \R^A$
is the set
\[
\struc{\ssp(f)} = \big\{\,\br \in A \cup C \,\, \big|\,\, f \text{ admits a sonc-decomposition with } t_{\br} > 0 \,\,\big\}.
\]
We write $\struc{\ssp(f; A)}$ in case we need to specify which support set is considered.
A subset $R\subset A \cup C$ is called a \struc{\textit{sonc-support}} if there exists some $f\in \R^A$
such that $R = R(f)$.
\end{definition}

That is, the sonc-support of $f$ consists of all simplicial circuits and monomials that appear in some sonc-decomposition of $f$.

\begin{example}
Consider the Motzkin polynomial $f$ from Example \ref{Example:MotzkinPolynomial}.
As defined, it is supported on the Motzkin circuit.
As it has a singular point at $(1,1)$, its sonc-support contains no monomials,
so that $R(f)$ consists only of the Motzkin circuit.
\end{example}

Just as the sonc-decomposition \eqref{eqn:ReznickSum} 
has a singular part and a regular part, 
the sonc-support $R(f)$ splits into two parts.
However, we use a slightly more elaborate definition.

\begin{definition}
\label{def:SingularPart}
The \struc{\emph{singular part}} of a sonc-support $R(f)$ is the set
\[
\smsp(f) = \{ \,\bc \in \ssp(f)\cap C\, \mid \, \al \in \bc  \Rightarrow \al \notin \ssp(f)\,\}.
\]
The \struc{\emph{regular part}} of the sonc-support $\ssp(f)$ is the complement $\mosp(f) = \ssp(f)  \setminus \smsp(f)$.
\end{definition}

By Lemma \ref{lem:CircuitSoncDecomposition}, the singular part $S(f)$ consists of all simplicial circuits $c \in R(f)$ such that any exponential sum supported on 
$c$ appearing in a sonc-decomposition of $f$ is singular.
For example, in case of the Motzkin polynomial, the sonc-support consists only of a singular part, namely the Motzkin circuit; the regular part is empty.
Notice that the Motzkin polynomial is contained in the boundary of the sonc-cone (and the nonnegativity cone). The intuition behind the following proposition is that
 the boundary of the sonc-cone is characterized by that $S(f) \neq \varnothing$.

\begin{prop}
\label{pro:InteriorPositiveCombination}
An exponential sum $f \in \R^A$ lies in the interior of the sonc-cone 
$\Splus_A$ if and only if $R(f) = A \cup C$.
\end{prop}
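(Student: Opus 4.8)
The plan is to prove both implications by exhibiting an explicit sonc-decomposition with all possible terms active, using convexity of the parametrizing data.

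\emph{The ``if'' direction.}
Suppose $R(f) = A \cup C$. For each $\br \in A\cup C$ there is, by definition of the sonc-support, a sonc-decomposition $D_{\br}$ of $f$ in which the coefficient $t_{\br}$ is strictly positive. Averaging these finitely many decompositions,
\[
f \ = \ \frac{1}{|A\cup C|}\sum_{\br \in A \cup C} D_{\br},
\]
which is again a valid (reduced, after collecting terms and invoking the Propositions on reduced decompositions) sonc-decomposition of $f$, now has \emph{every} coefficient $t_{\br}$ strictly positive. The averaged decomposition thus writes $f$ as a positive combination of all monomials $e^{\<\x,\al\>}$, $\al \in A$, together with a singular agiform $\<\varphi_A(\x-\x_{\bc}),\bc\>$ for each $\bc \in C$ with positive weight. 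I would then argue that one can perturb $f$ in any direction of $\R^A$ and still retain a sonc-decomposition: since $A$ spans $\R^A$ as a cone modulo the action (in fact the monomials alone span a full-dimensional simplicial subcone after the positive combination), subtracting a small multiple of an arbitrary $g \in \R^A$ can be absorbed by decreasing the strictly positive monomial coefficients $t_{\al}$ while keeping everything nonnegative. Hence an open neighborhood of $f$ lies in $\Splus_A$, i.e.\ $f$ is interior.

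\emph{The ``only if'' direction.}
Conversely, suppose $f$ is in the interior of $\Splus_A$. Fix $\br_0 \in A \cup C$; I must show $\br_0 \in R(f)$. The key observation is that each generator of the sonc-cone lies in $\Splus_A$ and is nonzero, hence for small $\varepsilon>0$ the element $f - \varepsilon g_{\br_0}$, where $g_{\br_0}$ is the monomial $e^{\<\x,\al\>}$ if $\br_0 = \al \in A$, or a fixed singular agiform $\<\varphi_A(\x - \x_{\bc}),\bc\>$ if $\br_0 = \bc \in C$, still lies in $\Splus_A$ (because $f$ is interior). Take any reduced sonc-decomposition of $f - \varepsilon g_{\br_0}$ and add back $\varepsilon g_{\br_0}$: this is a reduced sonc-decomposition of $f$ in which the coefficient of $\br_0$ is at least $\varepsilon > 0$ (using that adding a singular agiform on $\bc$ to a decomposition, then re-reducing via Lemma~\ref{lem:CircuitSoncDecomposition}, only increases the weight at $\bc$, and similarly for monomials). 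Therefore $\br_0 \in R(f)$, and since $\br_0$ was arbitrary, $R(f) = A \cup C$.

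\textbf{Main obstacle.}
The delicate point is the ``if'' direction: making precise that a positive combination of \emph{all} the generators is genuinely in the \emph{interior}, not merely on a high-dimensional face. This requires knowing that the generators (monomials plus singular agiforms with their singular loci as free parameters) positively span $\R^A$ in a full-dimensional way — equivalently, that the parametric map \eqref{eqn:ReznickSum} is a submersion at the chosen parameter point. I expect this to follow from the fact that the $1+d$ monomials $e^{\<\x,\al\>}$ already form a positive basis's worth of directions: since $\R^A \simeq \widecheck\R^{1+d}$ and the monomial coefficients can be moved freely in a neighborhood while staying nonnegative, the monomial part alone contributes a full-dimensional cone, and hence so does any positive combination containing all monomials with positive weight. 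The only subtlety is handling the case where $A$ is such that $C = \varnothing$ or some circuits are redundant, but in those degenerate cases the statement reduces to the classical fact that the positive orthant's interior is $\{t_\al > 0\ \forall \al\}$, so the claim still holds.
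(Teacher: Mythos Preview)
Your proof is correct and follows essentially the same approach as the paper: for the ``only if'' direction you subtract a small multiple of a generator and add it back (the paper does this for all generators simultaneously rather than one at a time), and for the ``if'' direction you ultimately rely on the fact that having all monomial coefficients $t_{\al}>0$ lets you absorb any small perturbation of $f$ into the monomial part. The paper is simply more terse about the latter point: it observes that the converse ``follows from $A \subset A\cup C$,'' meaning that your averaging step is unnecessary --- you only need a decomposition with all $t_{\al}>0$ for $\al\in A$, not one with all $t_{\br}>0$ for $\br\in A\cup C$, since the monomials already span $\R^A$.
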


\begin{proof}
For each $\br \in A \cup C$, fix an agiform (or a monomial) $g_{\br} \in \R^{\br}$.
Since the sonc-cone is full-dimensional,
if $f$ belongs to the interior of $\Splus_A$, then so does the exponential sum
\[
h(\x) = f(\x) - \varepsilon \sum_{\br\in A \cup C} g_{\br}(\x),
\]
for $\varepsilon > 0$ sufficiently small.
By definition, $h$ admits a sonc-decomposition. To write $f$ as a positive combination of the agiforms
$g_{\br}$, it suffices to move the sum appearing in the above equality to the left hand side. The converse statement follows from that $A \subset A \cup C$.
\end{proof}

\begin{example}
\label{Example:MotzkinStrictPositivesoncComplexes}
Let $f$ be the Motzkin polynomial from \eqref{eqn:Motzkin}, and consider the positive polynomial
\begin{equation}
\label{eqn:Motskin2}
g(z_1, z_2) = f(z_1, z_2) + \frac1{10}\, {z_1^2z_2^2},
\end{equation} 
which belongs to the interior of the sonc-cone.
The expression \eqref{eqn:Motskin2} is a reduced sonc decomposition of $g$,
including terms indexed by the Motzkin circuit $c$ and the point $(2,2)$. 
The reader can verify that
\begin{equation}
\label{eqn:Motskin3}
	 g(z_1, z_2) \ = \ \Big(1 + z_1^4z_2^2 + \frac{29^3}{30^3} \cdot z_1^2z_2^4 - \frac{29}{10} \cdot z_1^2z_2^2\Big) + \frac1{10} \cdot z_1^4z_2^2.
\end{equation}
One can check  (e.g., using \cite[Theorem 1.1]{Iliman:deWolff:Circuits}) that the first term in \eqref{eqn:Motskin3} is a singular nonnegative agiform.
Hence, \eqref{eqn:Motskin3} implies that $(4,2) \in R(g)$.
Using analogue arguments, one can show that $(2,4), (0,0) \in R(g)$.
Hence, as stated in Proposition \ref{pro:InteriorPositiveCombination},
$R(g) = A \cup C$, where $A = c$ is the Motzkin circuit.
\end{example}

Fix a sonc-support $R$. Then, each reduced sonc-decomposition of an exponential sum $f(\x)$ whose sons-support is $R$, is determined by the scaling factors $\bt$, and 
by the parameters $\bw$, se \eqref{eqn:ReznickSum}.
By the following lemma, whether $f(\x)$ belongs to
the boundary of the sonc-cone or not, depends only on the 
relative positions of the singular loci $\bw$.

\begin{lemma}
\label{lem:LocationOfSingularStrata}
Let $R \subset A\cup C$, and assume that $f$ admits a sonc-decomposition
$f(\x) = \sum_{\br \in R} t_{\br} \, g_{\br}(\x)$.
Then, $f$ belongs to the boundary of $\Splus_A$ if and only if for all $s_{\br} > 0$, the exponential sum 
$h(\x) = \sum_{{\br} \in R} s_{\br} \, g_{\br}(\x)$
belongs to the boundary of the sonc-cone.
\end{lemma}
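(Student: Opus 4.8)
The plan is to exploit the fact, recorded after Proposition~\ref{pro:InteriorPositiveCombination} and implicit in Definition~\ref{def:SingularPart}, that membership in the boundary of $\Splus_A$ is governed entirely by the singular part $S(f)$: an exponential sum lies in the interior if and only if $R(f) = A\cup C$, and the obstruction to enlarging $R(f)$ comes from the circuits in $S(f)$, whose agiforms are forced to be singular. The key observation is that whether a circuit $\bc$ lies in $S(h)$ does \emph{not} depend on the positive scalars $s_{\br}$, only on the \emph{locations} $\x_{\bc}$ of the singular loci and on which circuits/monomials appear with positive coefficient. So the strategy is to show that, for any choice of positive scalars $s_{\br}$, the sum $h$ has the same singular part as $f$, and then invoke Proposition~\ref{pro:InteriorPositiveCombination}.

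First I would fix the reduced sonc-decomposition $f(\x) = \sum_{\br \in R} t_{\br}\, g_{\br}(\x)$, writing $g_{\bc}(\x) = \<\varphi_A(\x - \x_{\bc}), \bc\>$ for $\bc \in R\cap C$ and $g_{\al}(\x) = e^{\<\x,\al\>}$ for $\al \in R\cap A$. Given positive scalars $s_{\br}$, set $h(\x) = \sum_{\br\in R} s_{\br}\, g_{\br}(\x)$. Clearly $h$ admits a reduced sonc-decomposition, so $h \in \Splus_A$ and $R(h) \supseteq R$. The core claim is the symmetric statement: $S(h) \neq \varnothing$ if and only if $S(f) \neq\varnothing$; combined with Proposition~\ref{pro:InteriorPositiveCombination} (in the contrapositive: $f$ is on the boundary iff $R(f)\neq A\cup C$), this yields the lemma once we know $R(f) = A\cup C \iff R(h) = A\cup C$. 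Since the roles of $f$ and $h$ are symmetric (we may recover $f$ from $h$ by rescaling with $t_{\br}/s_{\br} > 0$), it suffices to prove one direction: if $R(h) = A\cup C$ then $R(f) = A\cup C$.

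So suppose $R(h) = A\cup C$, i.e.\ $h$ lies in the interior of $\Splus_A$. I would then argue that $f$ also lies in the interior. The natural route is to perturb: for $\varepsilon > 0$ small, consider $f_\varepsilon(\x) = f(\x) - \varepsilon\, h(\x) = \sum_{\br\in R}(t_{\br} - \varepsilon s_{\br})\, g_{\br}(\x)$; for $\varepsilon < \min_{\br} t_{\br}/s_{\br}$ all coefficients remain positive, so $f_\varepsilon \in \Splus_A$, and $f = f_\varepsilon + \varepsilon h$ is a sum of an element of the sonc-cone and an interior point of the sonc-cone. Since the sonc-cone is convex and full-dimensional, adding an interior point lands in the interior, hence $f$ is interior, i.e.\ $R(f) = A\cup C$. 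By symmetry the converse holds, so $R(f) = A\cup C \iff R(h) = A\cup C$, and Proposition~\ref{pro:InteriorPositiveCombination} finishes the proof.

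The main obstacle I anticipate is making the convexity step fully rigorous at the level of \emph{boundary} rather than interior: one must be sure that ``interior point plus any point of the cone is interior'' is being used correctly (it is, for a convex set with nonempty interior in $\R^A$), and that the equivalence ``$f$ on the boundary $\iff R(f)\neq A\cup C$'' is exactly the contrapositive of Proposition~\ref{pro:InteriorPositiveCombination} together with $f\in\Splus_A$. A subtlety worth a sentence is that $f$ and $h$ are automatically in $\Splus_A$ (not merely in its closure), since they carry explicit reduced sonc-decompositions, so ``boundary of $\Splus_A$'' here means boundary points that lie in the cone, and the dichotomy interior/boundary is clean.
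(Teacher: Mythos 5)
Your proposal is correct and follows essentially the same route as the paper: both write $f = \varepsilon h + \sum_{\br}(t_{\br}-\varepsilon s_{\br})g_{\br}$ with $\varepsilon$ small enough that the remainder has nonnegative coefficients, use the symmetry between $f$ and $h$, and conclude via Proposition~\ref{pro:InteriorPositiveCombination}. The only cosmetic difference is that the paper deduces the sonc-support inclusion $R(h)\subset R(f)$ directly from this decomposition, whereas you invoke the convexity fact that an interior point of a full-dimensional cone plus any element of the cone is interior; both middle steps are valid.
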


\begin{proof}
Let $\varepsilon$ be defined by that $\min_{{\br} \in R}(t_{\br})= \varepsilon\max_{{\br} \in R}(s_{\br})$. We have that
\[
f(\x) = \varepsilon h(\x)  + \sum_{{\br} \in R} (t_{\br} - \varepsilon s_{\br}) g_{\br}(\x).
\]
As the last sum has nonnegative coefficients, we conclude that $R(h) \subset R(f)$.
The reverse inclusion follows by symmetry. The lemma now follows
Proposition~\ref{pro:InteriorPositiveCombination}, 
which says that an exponential sum belongs to the interior of the sonc-cone if and only if
$R(f) =A\cup C$.
\end{proof}

\subsection{Exponential sums with a unique minimizer}
The following propositions, concerning the case that $R(f) = C$,
are technical yet crucial for our further investigation.

\begin{prop}
\label{pro:AllCircuitsSingularPoint}
Assume that $A$ has a relative interior point.
Let $f \in \R^A$ be such that $\ssp(f) = C$. 
Then, $f$ has a singular point.
\end{prop}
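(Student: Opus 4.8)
The statement asserts that if $A$ has a relative interior point and $f \in \R^A$ has sonc-support exactly $C$ (so $R(f) = C$, meaning $f$ admits a sonc-decomposition using only simplicial agiforms, no monomials, and every simplicial circuit can appear with a positive coefficient), then $f$ has a singular point, i.e.\ $f$ vanishes somewhere on $\R^n$. The plan is to argue by contradiction: suppose $f(\x) > 0$ for all $\x \in \R^n$. Since $R(f) = C$, by Proposition~\ref{pro:InteriorPositiveCombination} the exponential sum $f$ lies on the boundary of $\Splus_A$ (because $R(f) \neq A \cup C$, as $A \not\subset C$ and $A$ is nonempty). The goal is to show that a strictly positive $f$ on the boundary of the sonc-cone cannot have sonc-support equal to all of $C$ — rather, strict positivity should force some monomial into the sonc-support, giving $R(f) \cap A \neq \varnothing$, a contradiction.

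First I would fix a reduced sonc-decomposition $f(\x) = \sum_{\bc \in C} t_{\bc}\,\<\varphi_A(\x - \x_{\bc}), \bc\>$ realizing $R(f) = C$ (such a decomposition exists by combining all decompositions witnessing individual circuits, possibly averaging). Let $\al_0$ be a relative interior point of $\cN(A)$. Each singular agiform $\<\varphi_A(\x - \x_{\bc}), \bc\>$ vanishes on the affine subspace $\x_{\bc} + \Aff(\bc)^{\perp}$ by \eqref{eqn:SingularLocus}. The key observation is that $f$ being strictly positive and an exponential sum (hence proper, tending to $+\infty$ at infinity in suitable directions) should have a genuine global minimizer $\x^*$ on $\R^n$; call $m^* = f(\x^*) > 0$ by the contradiction hypothesis. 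Then $f - m^* e^{\<\x, \al_0\>}$ — or more carefully $f$ minus a small multiple of the monomial $e^{\<\x,\al_0\>}$ — should still be nonnegative near $\x^*$ and, using Theorem~\ref{thm:OneNegativeTerm} or a local argument, remain in $\Splus_A$; this would exhibit a sonc-decomposition of $f$ in which the monomial $\al_0$ appears with positive coefficient, contradicting $\al_0 \notin R(f)$.

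The cleaner route, which I expect to be the intended one, is to use Lemma~\ref{lem:LocationOfSingularStrata}: since $R(f) = C$ and a decomposition over all of $C$ exists, pick coefficients $s_{\bc} > 0$ making $h = \sum_{\bc} s_{\bc}\,\<\varphi_A(\x - \x_{\bc}),\bc\>$ with all singular loci $\x_{\bc} + \Aff(\bc)^{\perp}$ arranged to share no common point — this is possible precisely because $A$ has a relative interior point, so $C$ contains circuits whose affine spans are all of the ambient space shifted to disjoint locations, or more carefully, one can choose the $\x_{\bc}$ generically so that $\bigcap_{\bc} \sing(g_{\bc}) = \varnothing$. Then $h$ is a sum of nonnegative functions with no common zero, hence $h > 0$ everywhere; since $h$ is a positive exponential sum it attains a positive minimum, and subtracting a small multiple of any monomial keeps it nonnegative, so $R(h) \supsetneq C$. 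But Lemma~\ref{lem:LocationOfSingularStrata} forces $R(h) = R(f) = C$ (both on the boundary, or both in the interior), a contradiction. Therefore $f$ must have a singular point.

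\textbf{Main obstacle.} The delicate point is establishing that one can choose the singular loci $\x_{\bc}$ — while keeping all coefficients positive so that the full circuit set $C$ is used — in such a way that the agiforms have \emph{no common zero}, i.e.\ $\bigcap_{\bc \in C}\big(\x_{\bc} + \Aff(\bc)^{\perp}\big) = \varnothing$. One needs that $C$ is rich enough: using Lemma~\ref{lem:SimplexCircuitConstruction}, since $\al_0$ is a relative interior point, there are many simplicial circuits with $\al_0$ interior whose vertex sets span $\R^n$ affinely; translating two such circuits to disjoint affine fibers already kills the intersection. The second subtle point is the passage from ``no common zero'' to ``can absorb a positive multiple of a monomial,'' which requires knowing the minimum of $h$ is strictly positive and that subtracting $\epsilon\,e^{\<\x,\al_0\>}$ preserves membership in $\Splus_A$ for small $\epsilon$ — the latter follows because $\Splus_A$ is full-dimensional and $h$ would then lie in its interior, contradicting Lemma~\ref{lem:LocationOfSingularStrata} applied to the boundary status of $f$. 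Keeping the logical dependence between ``$h > 0$ everywhere,'' ``$h \in \mathrm{int}(\Splus_A)$,'' and ``$R(h) = A \cup C$'' straight is the crux of writing this cleanly.
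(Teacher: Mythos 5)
Your proposal has a genuine gap, and it sits exactly where you flag "the crux." Your plan is: assume $f>0$ everywhere, rearrange the decomposition so the singular loci have no common point, conclude $h>0$, and then deduce $h\in\mathrm{int}(\Splus_A)$, contradicting Lemma~\ref{lem:LocationOfSingularStrata}. Two steps fail. First, Lemma~\ref{lem:LocationOfSingularStrata} fixes the agiforms $g_{\br}$ and only lets you rescale the coefficients $t_{\br}\mapsto s_{\br}$; it does \emph{not} let you relocate the singular loci $\x_{\bc}$. So you cannot "arrange" the loci to be disjoint and still tie the resulting $h$ back to the boundary status of $f$. Second, and more fundamentally, the implication ``$h>0$ everywhere $\Rightarrow$ $h\in\mathrm{int}(\Splus_A)$'' (equivalently, that a small multiple of a monomial can be subtracted while staying in $\Splus_A$, not merely in $\Pplus_A$) is false in general and is essentially the statement being proved: the entire point of the gap between $\Splus_A$ and $\Pplus_A$ is that strictly positive exponential sums can lie on the boundary of the sonc cone. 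Your justification for this step appeals to $h$ lying in the interior, which is what you are trying to establish -- the argument is circular. (A smaller issue: a strictly positive exponential sum need not attain its infimum on $\R^n$, so "a genuine global minimizer $\x^*$" also requires an argument.)

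The paper closes this gap by an induction on the number of non-vertex points of $A$, reducing to a situation where Theorem~\ref{thm:OneNegativeTerm} applies. In the base case every point of $A\setminus\{\al_0\}$ is a vertex, so $f$ has at most one negative term; there the boundary of $\Splus_A$ coincides with the boundary of $\Pplus_A$, and boundary membership (which follows from $\ssp(f)=C\neq A\cup C$ via Proposition~\ref{pro:InteriorPositiveCombination}) forces a zero. For the inductive step one picks a non-vertex $\al_1\neq\al_0$ and, using Lemma~\ref{lem:SimplexCircuitConstruction}, two circuits $\bc_0,\bc_1\in C$ in which the monomial $e^{\langle\x,\al_1\rangle}$ appears with opposite signs; adding suitable positive multiples of the corresponding agiforms to $f$ cancels that monomial and produces $h\geq f$ supported on $A'=A\setminus\{\al_1\}$ with $\ssp(h;A')=C'$. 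By induction $h$ has a zero $\bw$, and $0=h(\bw)\geq f(\bw)\geq 0$ forces $f(\bw)=0$. If you want to salvage your approach, you would need to supply precisely this kind of reduction to the one-negative-term case; as written, the proposal does not prove the proposition.
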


\begin{proof}
Let $\al_0$ be a relative interior point of $A$. We use induction over the number of points of $A$ which are not vertices of the Newton polytope $\cN$.
The base case if when all points of $A\setminus \{\al_0\}$ are vertices
of $\cN$. Then, $f$ has at most one negative term.
Since $\ssp(f) \neq A \cup C$, we have that $f$ belongs to the boundary of $\Splus_A$, so $f$ belongs to the boundary of $\Pplus_A$ by Theorem~\ref{thm:OneNegativeTerm}.
Hence, $f$ has a singular point.

Now assume that there is a point $\al_1 \in A \setminus \{\al_0\}$ which is
not a vertex of $\cN$.
By Lemma~\ref{lem:SimplexCircuitConstruction},
there exists $c_1 \in C$ with $\al_1$
as an interior point, and there exists $c_0\in C$ with $\al_1$ as a vertex and 
$\al_0$ as an interior point. 
Let us enumerate the remaining simplicial circuit contained in $A$
by $c_2, \dots, c_k$. Consider a sonc-decomposition
\begin{equation}
\label{eqn:Prop4.7.1}
f(\x) = t_0\, g_0(\x) + \dots + t_kg_k(\x),
\end{equation}
where $g_i(\x)$ is an agiform supported on $c_i$, and $t_i > 0$.
Since the coefficient of $e^{\<\x, \al_1\>}$ is negative in $g_1(\x)$ and positive in $g_0(\x)$,
we can find $s_0, s_1 > 0$ such that  the coefficient of $e^{\<\x, \al_1\>}$ in the exponential sum
\begin{equation}
\label{eqn:Prop4.7.2}
h(\x) = f(\x) + s_0\, g_0(\x) + s_1 \,g_{1}(\x)
\end{equation}
vanishes. Let $A' = A\setminus\{\al_1\}$, and let $C'$ denote the set of simplicial 
circuits contained in $A'$. It follows from \eqref{eqn:Prop4.7.1} and \eqref{eqn:Prop4.7.2} that $\ssp(h; A') = C'$, since any circuit of $C'$ appears in the sonc-decomposition \eqref{eqn:Prop4.7.1}.
Since  $A'$ has a strictly smaller cardinality than $A$, we can apply the induction 
hypothesis and conclude that $h(\x)$ has a singular point $\bw$.
Then,
\[
0  = h(\bw) \geq f(\bw) \geq 0,
\]
which implies that $\bw$ is a singular point of $f(\x)$.
\end{proof}

\begin{prop}
\label{pro:DiscSoncSuppWhenAddingPoints}
Assume that $A$ has a relative interior point, and let $f\in \R^A$ be such that $\ssp(f; A) = C$. Let $\al_{d+1} \in \cN \setminus A$,
and set $\hat A = A \cup \{\al_{d+1}\}$.
Then, $\ssp(f; \hat A) = \hat C$,
where $\hat C$ denotes the set of simplicial circuits contained in $\hat A$.
\end{prop}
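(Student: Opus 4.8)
The plan is to show both inclusions $\ssp(f;\hat A)\subseteq \hat C$ and $\hat C\subseteq \ssp(f;\hat A)$. For the first, note that adding the point $\al_{d+1}$ to $A$ enlarges the ambient space from $\R^A$ to $\R^{\hat A}$, but $f$ has a zero coefficient at $\al_{d+1}$. Any sonc-decomposition of $f$ in $\R^{\hat A}$ that used the monomial $e^{\<\x,\al_{d+1}\>}$ with positive coefficient, or a simplicial circuit $\bc$ with $\al_{d+1}$ as a vertex and a nonzero coefficient, would contribute a strictly positive coefficient at $\al_{d+1}$; since the singular agiforms contribute a nonpositive coefficient at their interior point only, and every other term contributes a nonnegative coefficient there, the total coefficient at $\al_{d+1}$ could vanish only if some circuit in the decomposition has $\al_{d+1}$ as its relative interior point. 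So a priori $\ssp(f;\hat A)$ could contain circuits through $\al_{d+1}$. The key observation is that, by Proposition~\ref{pro:AllCircuitsSingularPoint}, $f$ has a singular point $\bw\in\R^n$; evaluating any reduced sonc-decomposition of $f$ at $\bw$ forces \emph{every} agiform and monomial appearing with positive coefficient to vanish at $\bw$ (since all terms are nonnegative and their sum is zero). A monomial $e^{\<\x,\al\>}$ never vanishes, so no monomial can appear; and an agiform supported on $\bc$ vanishes at $\bw$ only if $\bw\in\sing(g_{\bc})=\x_{\bc}+\Aff(\bc)^{\perp}$. This already shows $\ssp(f;\hat A)\subseteq \hat C$ — but we still must rule out circuits $\bc\in\hat C\setminus C$, i.e.\ circuits genuinely using $\al_{d+1}$. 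For such a $\bc$, since $\al_{d+1}\in\cN(A)$, one can argue that the corresponding agiform can be rewritten, via Lemma~\ref{lem:SimplexCircuitConstruction} and the exchange arguments used in Proposition~\ref{pro:AllCircuitsSingularPoint}, as a positive combination of agiforms supported on circuits in $C$ together with (possibly) the monomial $e^{\<\x,\al_{d+1}\>}$; the monomial cannot survive (it would reintroduce a positive coefficient at $\al_{d+1}$ with nothing to cancel it, as just argued), so $\bc\notin\ssp(f;\hat A)$. Hence $\ssp(f;\hat A)\subseteq C\subseteq\hat C$.

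For the reverse inclusion $\hat C\subseteq\ssp(f;\hat A)$, first observe $C\subseteq\ssp(f;\hat A)$: a reduced sonc-decomposition of $f$ valid in $\R^A$ is a fortiori valid in $\R^{\hat A}$, and since $\ssp(f;A)=C$ every $\bc\in C$ already appears with positive coefficient in some such decomposition. It remains to produce, for each $\bc\in\hat C\setminus C$ (a simplicial circuit using $\al_{d+1}$), a sonc-decomposition of $f$ in $\R^{\hat A}$ with $t_{\bc}>0$. Here the idea is a perturbation in the spirit of the proof of Proposition~\ref{pro:InteriorPositiveCombination}: start from a decomposition $f=\sum_{\bc'\in C}t_{\bc'}g_{\bc'}$ with all $t_{\bc'}>0$, and use that $\al_{d+1}\in\cN(A)$ together with Lemma~\ref{lem:SimplexCircuitConstruction} to find simplicial circuits realizing $\al_{d+1}$ both as an interior point (of some $\bc^{\mathrm{in}}$) and as a vertex (of some $\bc^{\mathrm{vtx}}$, with interior point among the non-vertices of $A$). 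Adding and subtracting a small multiple $\varepsilon(g_{\bc^{\mathrm{in}}}+g_{\bc^{\mathrm{vtx}}})$ chosen so the coefficient of $e^{\<\x,\al_{d+1}\>}$ stays zero, and absorbing the correction into the existing positive terms, exhibits $\bc^{\mathrm{in}}$ and $\bc^{\mathrm{vtx}}$ in $\ssp(f;\hat A)$; iterating over all circuits through $\al_{d+1}$ (their number being finite) and combining the resulting decompositions gives $\hat C\subseteq\ssp(f;\hat A)$.

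I expect the main obstacle to be the containment $\ssp(f;\hat A)\subseteq C$ — specifically, cleanly ruling out circuits through $\al_{d+1}$ appearing in the \emph{singular} part $S(f;\hat A)$. The argument "evaluate at the common singular point $\bw$" immediately kills monomials, but a circuit $\bc$ with $\al_{d+1}$ in its relative interior can have a singular locus passing through $\bw$, so vanishing at $\bw$ alone does not exclude it; one genuinely needs the rewriting step showing such an agiform decomposes over $C$ (modulo a monomial at $\al_{d+1}$ that is then forced to be absent). This rewriting is a two-dimensional-kernel / exchange argument exactly parallel to the one inside Proposition~\ref{pro:AllCircuitsSingularPoint}, and the delicate point is to carry it out while simultaneously tracking that the coefficient at $\al_{d+1}$ remains zero throughout. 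Everything else — the monomial-free conclusion from a common zero, and the perturbation producing the extra circuits — is routine given the earlier lemmas.
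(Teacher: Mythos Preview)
Your proposal contains an internal contradiction. In the first paragraph you argue that circuits $\bc\in\hat C\setminus C$ (those genuinely using $\al_{d+1}$) satisfy $\bc\notin\ssp(f;\hat A)$, concluding $\ssp(f;\hat A)\subseteq C$. In the second paragraph you argue that every such circuit \emph{does} lie in $\ssp(f;\hat A)$, concluding $\hat C\subseteq\ssp(f;\hat A)$. Since $\hat C\supsetneq C$ (the new point creates new simplicial circuits), these two conclusions are incompatible. The statement to be proved is $\ssp(f;\hat A)=\hat C$, not $=C$; circuits through $\al_{d+1}$ are \emph{supposed} to be in $\ssp(f;\hat A)$---that is precisely the content of the proposition. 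Your first paragraph should stop at ``This already shows $\ssp(f;\hat A)\subseteq\hat C$'': the evaluation at the singular point $\bw$ kills all monomials, and that is all that is needed for this inclusion. Everything from ``but we still must rule out circuits $\bc\in\hat C\setminus C$'' onward, including the exchange argument you describe and the summary paragraph identifying $\ssp(f;\hat A)\subseteq C$ as the ``main obstacle,'' is aimed at a false target and should be discarded.

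Your second paragraph is essentially the paper's argument, but one key ingredient is missing. The new agiforms $g_{\bc^{\mathrm{in}}}$ and $g_{\bc^{\mathrm{vtx}}}$ must be chosen to be singular at the \emph{same} point $\bw$ (the singular point of $f$ furnished by Proposition~\ref{pro:AllCircuitsSingularPoint}). After translating $\bw$ to the origin, every agiform in the original decomposition has the form $t_i\<\varphi_A(\x),\bc_i\>$; choosing $g_{\bc^{\mathrm{in}}}=\<\varphi_{\hat A}(\x),\bc^{\mathrm{in}}\>$ and similarly for $\bc^{\mathrm{vtx}}$ guarantees that a suitable positive combination $s_0\bc^{\mathrm{vtx}}+s_1\bc^{\mathrm{in}}$ (cancelling the $\al_{d+1}$-coordinate) lies in $\ker(A)$ and hence, by Proposition~\ref{prop:BasisSimplexCircutis}, equals $\sum_i r_i\bc_i$ for real $r_i$. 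Only then does ``absorbing the correction into the existing positive terms'' make sense as a sonc-decomposition. The paper also replaces your ``iterating over all circuits through $\al_{d+1}$'' with a cleaner step: once $\ssp(f;\hat A)$ contains $C$ together with one new circuit, it contains a basis of $\ker(\hat A)$, and the same linear-combination trick then produces any $\bd\in\hat C$.
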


\begin{proof}
Let $C = \{c_1, \dots c_k\}$.
By Proposition~\ref{pro:AllCircuitsSingularPoint},
the exponential sum $f(\x)$ has a singular point, which (using the group action)
can be assumed to be $\x = (0,\dots, 0)$.
Then, there exists a sonc-decomposition
\[
f(\x) = \sum_{i=1}^k \, \< \varphi_A(\x), t_i \bc_i \>.
\]

Let $\al_0$ denote the relative interior point of $A$. 
On the one hand, by Lemma~\ref{lem:SimplexCircuitConstruction}, we find a simplicial circuit $\bd_0 \subset \hat A$ containing $\al_{d+1}$ as a vertex and $\al_0$ as an interior point. 
On the other hand, $\al_{d+1}$ is not a vertex of $\cN$,
so by Lemma~\ref{lem:SimplexCircuitConstruction}, there is a simplicial circuit $\bd_1 \subset A_0 = A \setminus\{\al_0\}$ containing $\al_{d+1}$ as its interior point.
Then, there are positive constants $s_0$ and $s_1$ such that
the entry corresponding to $\al_{d+1}$ in the linear combination
$s_0\,\bd_0 + s_1 \,\bd_1$ vanishes.
It follows that there are real numbers $r_1, \dots, r_k$ such that
\[
s_0\,\bd_0 + s_1 \,\bd_1 =  r_1 \bc_1 + \dots + r_k \bc_k
\]
and, by choosing $\varepsilon > 0$ sufficiently small (so that $t_i-\varepsilon r_i > 0$),
\[
f(\x) = \,\< \varphi_A(\x),\varepsilon s_0\bd_0\> +  \,\< \varphi_A(\x), \varepsilon s_1\bd_1\> + \sum_{i=1}^k \, \< \varphi_A(\x), (t_i-\varepsilon r_i) \bc_i \>,
\]
is a sonc-decomposition of $f(\x)$.
We conclude that $\bd_0, \bd_1 \in \ssp(f; \hat A)$.
Since $A$ has a relative interior point, $\cR_A$ is full-dimensional by Proposition~\ref{prop:BasisSimplexCircuits}, 
so that $\ssp(f; A)$ contains a basis of $\ker(A)$.
Hence, $\ssp(f; \hat A)$, which contains both $\ssp(f; A)$ and $\bd_0$,
contains a basis of $\ker(\hat A)$.
That is, for any simplicial circuit $\bd \subset \hat A$ we have that $\bd$
can be written as a linear combination of the vectors $\bd_0$ and $\bc_1, \dots, \bc_k$.
The same trick we used to show that $\bd_0$ and $\bd_1$ belong to $\ssp(f; \hat A)$
gives that $\bd \in \ssp(f; \hat A)$.
\end{proof}

\begin{example}
Consider the univariate (polynomial) agiform
\[
g_1(z) = 1 - 3 z^2 + 2 z^3.
\]
As a polynomial supported on $c_1 = \{0, 2, 3\}$, we have that
$\smsp(g_1; c_1) = \{c_1\}$. 
Now consider $g_1$ as a polynomial supported on $A = \{0, 1, 2, 3\}$.
Let $g_i$ is supported on the circuit $c_i = A \setminus\{i\}$ with a singular
point at $(1,1)$. That is,
\begin{align*}
	g_0(z) \ = \ z - 2z^2 + z^3, \quad g_2(z) \ = \ 2 - 3z + z^3, \quad
	\text{and} \quad  g_3(z) \ = \ 1 - 2z + z^2.
\end{align*}
It is straightforward to verify that
 \[
4 \,g(z) = 5\, g_0(z) + g_1(z) + g_2(z) + g_3(z).
\]
Hence, 
$\smsp(g; A) = \{c_0, c_1, c_2, c_3\}$.
In this manner, Proposition~\ref{pro:DiscSoncSuppWhenAddingPoints}
describes conditions under which we can control how sonc-supports change
when we add points to the support set $A$.
We make use of this trick in several of the subsequent proofs.
\end{example}
 
\subsection{Truncations and the location of singular loci}
Let $A'$ be a subset of $A$, with set of simplicial circuits $C'$.
We define the \struc{\emph{truncation}} of a sonc-decomposition 
\[
 \sum_{\br \in A \cup C} t_{\br}\, g_{\br}(\x)
\]
to the subset $A'$ to be the sonc-decomposition
\[
\sum_{\br \in A'\cup C'} t_{\br}\, g_{\br}(\x).
\]
It is important to note that the truncation depends on the sonc-decomposition. That is,
truncation is \emph{not} a well-defined operator on $\R^A$.
Rather, it should be seen as en operation on the parameter space
of the space of all sonc-decompositions with a given sonc-support.
The most interesting truncations are to \struc{\emph{extremal}} subsets of $A$; 
that is, subsets of the form $A_F = A \cap F$, where $F$ is a face of the Newton polytope $\cN(A)$. 

\begin{lemma}
\label{lem:TruncationsSingularLocus}
Assume that $A$ has a relative interior point and that $f \in \R^A$ has
$\ssp(f) = C$. 
Let $F \preccurlyeq \cN(A)$ be a face of $\cN(A)$. 
Let $g$ be a truncation of a sonc-decomposition
of $f$ to the face $F$, such that the Newton polytope
of $g$ is $\cN(A\cap F)$. Then,
\[
\sing(g) = \sing(f) + \Aff(F)^\perp.
\]
\end{lemma}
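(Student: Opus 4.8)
The plan is to deduce the identity by a double containment of affine subspaces, using the fact that singular loci of agiforms are affine subspaces parallel to $\Aff(\bc)^\perp$ (by \eqref{eqn:SingularLocus}) and that all the agiforms in a reduced sonc-decomposition share the singular point of $f$ when $\ssp(f)=C$. First I would invoke Proposition~\ref{pro:AllCircuitsSingularPoint} to conclude that $f$ has a singular point, which after translating by the canonical group action I may assume to be the origin; so $\sing(f)$ is a linear subspace, namely $\bigcap_{\bc\in C}\Aff(\bc)^\perp = \big(\sum_{\bc\in C}\Aff(\bc)\big)^\perp$, and since the barycentric coordinate vectors span $\ker(A)$ (Proposition~\ref{prop:BasisSimplexCircutis}) this intersection is the orthogonal complement of the affine span of all of $A$, i.e.\ $\Aff(\cN(A))^\perp$. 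In particular, in the translated coordinates $f(\x)=\sum_{\bc\in C}\<\varphi_A(\x),t_\bc\bc\>$ with every $t_\bc>0$, because each agiform is nonnegative and each vanishes on $\sing(f)$.

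Next I would analyze the truncation $g$. Write the sonc-decomposition of $f$ (in the translated coordinates) as $f(\x)=\sum_{\br\in A\cup C}t_\br g_\br(\x)$ with the monomial coefficients possibly present; the truncation to $F$ is $g(\x)=\sum_{\br\in (A_F\cup C_F)\cap F}t_\br g_\br(\x)$, where $C_F$ denotes the simplicial circuits supported on $A\cap F$. The key combinatorial point is that a term $g_\br$ survives the truncation precisely when $\supp(g_\br)\subset F$, and because $F$ is a face, for any agiform $g_\bc$ appearing in the decomposition of $f$ with $\bc\not\subset F$, the support of $g_\bc$ contains a point of $A$ strictly outside $F$ whose monomial coefficient is not cancelled by any other agiform (here one uses that $F$ is a face, so the vertices of $\cN(\bc)$ lying outside $F$ cannot be interior points of any circuit contributing to $g$ supported on $F$). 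This is the mechanism ensuring $\cN(g)=\cN(A\cap F)$ as hypothesized, and it tells us exactly which agiforms appear in $g$: those $g_\bc$ with $\bc\subset F$.

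For the containment $\sing(g)\supset \sing(f)+\Aff(F)^\perp$: since each surviving $g_\bc$ has $\bc\subset F$, we have $\Aff(\bc)\subset\Aff(F)$, hence $\sing(g_\bc)=\Aff(\bc)^\perp\supset\Aff(F)^\perp$, and moreover each such $g_\bc$ vanishes on $\sing(f)$ (as $\sing(f)\subset\sing(g_\bc)$, both being $\Aff(\cdot)^\perp$ with $\Aff(\bc)\subset\Aff(\cN(A))$). A nonnegative sum of nonnegative functions vanishes exactly where all summands vanish, so $\sing(g)=\bigcap_{\bc\subset F,\,\bc\in\ssp}\Aff(\bc)^\perp$; this contains $\sing(f)+\Aff(F)^\perp$, and in fact, since $\cN(g)=\cN(A\cap F)$ forces the surviving circuits to span $\ker(A_F)$ by Proposition~\ref{prop:BasisSimplexCircutis} applied within $F$, this intersection equals $\Aff(F)^\perp$. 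For the reverse containment I would argue that $\sing(g)\subset\Aff(F)^\perp$ cannot be strictly larger, again because the circuits of $g$ span $\ker(A_F)$, so $\sum_{\bc\subset F}\Aff(\bc)=\Aff(F)$ and the intersection of their orthogonal complements is exactly $\Aff(F)^\perp$; and then one must also lift back to the original (untranslated) coordinates, where $\sing(f)$ is a translate of $\Aff(\cN(A))^\perp$ and $\sing(g)$ the corresponding translate of $\Aff(F)^\perp$, giving $\sing(g)=\sing(f)+\Aff(F)^\perp$ since $\Aff(\cN(A))^\perp\subset\Aff(F)^\perp$.

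The main obstacle I anticipate is the bookkeeping around \emph{which} agiforms survive the truncation and the precise role of the hypothesis $\cN(g)=\cN(A\cap F)$: one needs to be careful that $g$ may contain monomial terms on $A\cap F$ and agiforms on circuits $\bc\subset F$, but nothing supported partly outside $F$, and to check that the surviving circuits genuinely span $\ker(A\cap F)$ so that the relevant orthogonal complements intersect in $\Aff(F)^\perp$ and nothing larger. The nonnegativity/vanishing argument (a sum of nonnegatives vanishes iff each term does) is the clean conceptual core; the face-combinatorics is where the care is required.
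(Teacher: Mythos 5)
Your proposal follows the paper's own (three-line) argument: invoke Proposition~\ref{pro:AllCircuitsSingularPoint} to get a singular point $\bw$ of $f$, observe that nonnegativity of the summands in the sonc-decomposition forces $\bw\in\sing(g)$, and then use \eqref{eqn:SingularLocus} together with the hypothesis $\cN(g)=\cN(A\cap F)$ to identify $\sing(g)$ as $\bw+\Aff(F)^{\perp}$. The additional bookkeeping you supply (which agiforms survive the truncation, and why the translates of the $\Aff(\bc)^{\perp}$ intersect in exactly $\bw+\Aff(F)^{\perp}$) is precisely what the paper leaves implicit — the only imprecision is your appeal to Proposition~\ref{prop:BasisSimplexCircutis}, which concerns all simplicial circuits of $A\cap F$ rather than the ones actually appearing in $g$, so the final spanning step is really carried by the Newton-polytope hypothesis, as you yourself note.
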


\begin{proof}
We can assume that $\cN(A)$ is full dimensional, in which case $\sing(f) = \bw$ is a point.
By Proposition \ref{pro:AllCircuitsSingularPoint}, 
we have that $\bw = \sing(f) \subset \sing(g)$.
Hence, using \eqref{eqn:SingularLocus}, we find that
$\sing(g) = \bw + \Aff(F)^{\perp}$.
\end{proof}

\begin{example}
	Consider the polynomial
	\begin{equation}
		f(z_1,z_2) = g(z_1, z_2) + h(z_1,z_2), \label{Equations:MotzkinVariationExampleTruncation}
	\end{equation}
	where $g(z_1, z_2) = z_1^2 - 2z_1 + 1$ and $h(z_1,z_2)$ is the Motzkin polynomial \eqref{eqn:Motzkin}.
	Both, $g(z_1, z_2)$ and $h(z_1,z_2)$ are nonnegative singular simplicial agiforms with a singular point
	at $(1, 1)$.
	Notice that $g$ is the truncation of $f$ to the face $F \preccurlyeq \cN$ which is contained in the
	first coordinate axis and that, since $g$ is constant with respect to $z_2$,
	\[
	  \sing(g) = \{(1,s) : s \in \R\} = (1,1) + \Aff(F)^\perp.\qedhere
	\]
\end{example}

\begin{thm}
\label{thm:OverlappingConstraint}
For $i = 1, 2$, let $A_i$ be a support set which contains a relative interior point, and
let $C_i$ denote the set of simplicial circuits contained in $A_i$. 
Let $A = A_1 \cup A_2$, and let $C$ denote the set of simplicial circuits contained in $A$. 
Let $P = \cN(A_1) \cap \cN(A_2)$ and, for $i = 1,2$, let $F_i$
denote the minimal face of $\cN(A_i)$ containing $P$.
Let $f_i$ be supported on $A_i$ with $\ssp(f_i; A_i) = C_i$.
Then, the exponential sum
\[
f(\x) = f_1(\x) + f_2(\x)
\] 
either has $\ssp(f; A) = A \cup C$, or there is a  translate of the normal space of $\Aff(F_1 \cup F_2)$
which intersects both $\sing(f_1)$ and $\sing(f_2)$.
\end{thm}

\begin{proof}
If $P = \varnothing$ or if $P$ is a vertex of both
$\cN(A_1)$ and $\cN(A_2)$, then there is nothing to
prove. There are two more cases: the case that one of $F_1$ and $F_2$ is zero-dimensional,
and the case that neither $F_1$ nor $F_2$ is zero-dimensional.

First, note that if $\al \in P$ is a relative interior point of $F_i$, then by Proposition~\ref{pro:DiscSoncSuppWhenAddingPoints}, there is no loss of generality in assuming that there is a circuit $c \subset A_i\cap F_i$, with $\al$ as its relative interior point, such that $\dim(c)= \dim(F_i)$. 
Then, by Lemma~\ref{lem:TruncationsSingularLocus}, there is no loss of generality
in replacing $f_i$ by a simplicial agiform $g_i$, supported on $\bc$, 
which appears in some sonc-decomposition of $f_i$.

Second, we note that if $\al \in A_1 \cap A_2$, then 
$\al \in \ssp(f; A)$ implies that $\ssp(f; A) = A \cup C$. Indeed,
Lemmas~\ref{lem:CircuitSoncDecomposition} and \ref{lem:SimplexCircuitConstruction}
give that: if $\al \in \ssp(f_i; A_i)$, then the relative interior point of $A_i$ is contained
in $\ssp(f_i; A_i)$, which in turn implies that $A_i \subset \ssp(f_i; A_i)$
for $i=1, 2$. 

Consider now the case that neither $F_1$ nor $F_2$ is zero-dimensional.
Let $\al$ be a relative interior point of $P$, so that $\al$ is a relative interior point of both $F_1$ and $F_2$.
As above, we can reduce to the case of two simplicial agiforms $g_1$ and $g_2$
with supports $\bc_1 \subset F_1$ respectively $\bc_2 \subset F_2$,
both with $\al$ as a relative interior point.
It suffices to show that either $g_1$ and $g_2$ have a common singular
point, or the exponential sum
$g(\x) = g_1(\x) + g_2(\x)$
has $\al \in \ssp(g; \bc_1 \cup \bc_2)$.
But the exponential sum $g(\x)$ has exactly one negative term. Hence, if $g$ is strictly positive
(i.e., if $g_1$ and $g_2$ does not have a common singular point),
then $\al \in \ssp(g)$ by Theorem~\ref{thm:OneNegativeTerm}.

Consider now the case that $F_2 = P = \{\al\}$ is a vertex of $\cN(A_2)$, 
while it is a relative interior point of $F_1$.
As above, we can assume that $F_1 = \cN(A_1)$.
Then, $\Aff(F_1 \cup F_2) = \Aff(F_1)$, so by \eqref{eqn:SingularLocus}
it remains to show that either $\al \in \ssp(f)$ or $f_1$ and $f_2$ have a common singular
point. By Proposition~\ref{pro:DiscSoncSuppWhenAddingPoints}, there is no loss of generality in 
assuming that $A_2$ contains a circuit $\bc_2$ with $\al$ as a vertex such that 
$\dim \bc_2 = \dim \cN(A_2)$.
Hence, we can reduce to two simplicial agiforms $g_1$ and $g_2$.
Since the monomial $e^{\<\x, \al\>}$ has coefficients of different signs in
$g_1$ and $g_2$, then there exists $t_1, t_2 > 0$
such that the coefficient of $e^{\<\x, \al\>}$
in $t_1g_1(\x) + t_2g_2(\x)$ vanishes.
Then, $t_1g_1(\x) + t_2g_2(\x)$ has one negative term,
 and we can apply Theorem~\ref{thm:OneNegativeTerm}
as in the previous case. 
\end{proof}

\begin{example}
Let $h(z_1,z_2)$ be the Motzkin polynomial and let
\[
g_1(z_3) = 1 - 2z_3 + z_3^2
\quad\text{and}\quad
g_2(z_4) = 1 - z_4 + \frac14 z_4^2,
\]
so that $\sing(g_1) = \{z_3 \, | \, z_3 = 1\}$ and $\sing(g_2) = \{z_4 \,|\,  z_4 = 2\}$. Let $\bz = (z_1, z_2, z_3, z_4)$ and introduce
\begin{align*}
f_1(\bz) & = h(z_1,z_2) + g_1(z_3) \quad \text{and}\\
f_2(\bz) & = h(z_1,z_2) + g_2(z_4),
\end{align*}
so that $\sing(f_1) = \{\bz \, |\,  z_1 = z_2 = z_3 = 1\}$ and $\sing(f_2) = \{\bz \, |\,  z_1 = z_2 = 1 \text{ and } z_4 = 2\}$.
Let $A_1$ and $A_2$ be the support sets of $f_1$ and $f_2$ respectively.
Notice that the only two simplicial circuits contained in $A_1$ are the Motzkin circuit
and the support set of $g_1$. Similarly, the only simplicial circuits contained in $A_2$
are the Motzkin circuit and the support set of $g_2$.
Since $f_1$ and $f_2$ are both singular, neither admits a sonc-decomposition that
contains monomials. It follows that $R(f_1) = C_1$ and $R(f_2) = C_2$, where $C_1$
and $C_2$ denote the sets of simplicial circuits contained in $A_1$ respectively $A_2$.
Now, let
\[
f(\bz) = f_1(\bz) + f_2(\bz),
\]
with support set $A$ and associated set of simplicial circuits $C$. 
Notice that $f(1, 1, 1, 2) = 0$, so that $R(f; A) \neq A \cup C$.
We have that $\cN(A_1) \cap \cN(A_2) = \cN(c)$.
Hence, by Theorem \ref{thm:OverlappingConstraint},
there exists a translate $L$ of the normal space of $\cN(c)$ intersecting $\sing(f_1)$ and $\sing(f_2)$.
Indeed, $L = \{\bz \, | \, z_1 = z_2 = 1\}$.
\end{example}

\subsection{Obstructions}
\label{ssec:Obstructions}
We now turn to the regular sonc-support $M(f)$. By Definition~\ref{def:SingularPart},
no $\al \in M(f)$ can be contained in a circuit $c$ which belongs to the
singular sonc-support $S(f)$. In fact, $S(f)$ imposes further \emph{obstructions} on $M(f)$:
If $c \in S(f)$, then no $\al \in M(f)$ can be contained in the affine space of $c$.

\begin{corollary}
\label{cor:RegularSoncSupporIntersection}
Let $f \in \R^A$. Then, for each $c \in S(f)$, the intersection of $\cN(A \cap M(f))$ 
with the affine span $\Aff(c)$ is empty. 
In particular, if $c \in S(f)$ is such that $\cN(c)$ is full-dimensional,
then $M(f) = \varnothing$.
\end{corollary}

\begin{proof}
Assume, on the contrary, that there exists a point $\al \in \Aff(c) \cap \cN(A \cap M(f))$.
Since $\al \in \cN(A \cap M(f))$, we can find a simplicial circuit $c'$ with vertices in 
$A \cap M(f)$ and $\al$ as its relative interior point. 
By definition, $f(\bz)$ has a sonc-decomposition that contains all terms 
with exponents in $A \cap M(f)$ with positive coefficients.
It follows that $f(\bz)$ admits a sonc decomposition that
contains an agiform supported on $c'$ and an exponential monomial
$a\,e^{\<\x, \al\>}$ with a positive coefficient. By assumption, the sonc-decomposition can
be chosen such that it contains an agiform $g_c(\x)$ supported on $c$.

The exponential sum $h(\x) = g_c(\x) + a\,e^{\<\x, \al\>}$, 
which appears in a sonc-decomposition of $f(\bz)$,
contains an exponential monomial with a positive coefficient and, hence, it is strictly positive.
That is, $h(\x)$ belongs to the interior of the nonnegativity cone $\Pplus_{A'}$,
where $A' = c \cup \{\al_1\}$. But $A'$ has codimension two and, hence, it follows from Proposition~\ref{prop:Codimension2} that $\Splus_{A'} =  \Pplus_{A'}$.
Hence, $h(\x)$ is contained in the interior of $\Splus_{A'}$. By Proposition \ref{pro:InteriorPositiveCombination}, $h(\x)$ admits a sonc-decomposition
which contains exponential monomials for all elements of $c$.
That is, $c \subset M(f)$, contradicting that $c \in S(f)$.
\end{proof}

\begin{example}
Consider the agiform $f(z_1, z_2) = 1 - 2z_1 + z_1^2$, which is supported on $c = \{(0,0),(1,0),(2,0)\}$.
Let
\[
g_1(z_1,z_2) = f(z_1) + z_1z_2^2.
\]
Since $g_1(1,0) = 0$, there is no sonc-decomposition of $g_1$ that contains a monomial 
which is constant with respect to $z_2$. Hence, $c \in S(g)$ and $(1,2) \in M(g)$.
Notice that $(1,2)$ is not contained in the affine span of the circuit $c$.
Now consider
\[
g_2(z_1,z_2) = f(z_1) + z_1^3.
\]
The exponent $(3,0)$ lies in the affine span of $c$.
By Corollary \ref{cor:RegularSoncSupporIntersection} it follows that $S(g_2) = \emptyset$.
Indeed, we have that
\[
g_2(z_1,z_2) = \Big(\frac12 -z + \frac12z^2\Big) + \Big(\frac12 - z + \frac{16}{27} z^3\Big) + \frac12 z^2 + \frac{11}{27} z^3.
\]
Since any simplicial circuit in the support of $g_2$ contains either $(2,0)$ or $(3,0)$,
we conclude that $S(g_2) = \emptyset$.
\end{example}

\section{The univariate case}
\label{sec:Univariate}

We represent a univariate support set as
\begin{equation}
\label{eqn:UnivariateA}
A = \big\{\alpha_0, \alpha_1, \dots, \alpha_d\big\}
\end{equation}
where $\alpha_0 < \alpha_1 < \dots < \alpha_d$,
and we denote the coefficients by $a_0, \dots, a_d$.
By Remark \ref{rem:Normalization}, there is no loss of generality in restricting
to the affine space $a_0 = a_d = 1$ in $\R^A$.
It follows from Corollary~\ref{cor:RegularSoncSupporIntersection},
and the fact that each circuit $c \subset A$ is full-dimensional,
that any exponential sum $f(x)$ which belongs to the boundary of the sonc-cone
has empty regular sonc-support.
We showed in \S \ref{sec:ReznickCone}
that $\cR_A$ is a full-dimensional and simple polyhedral cone, generated by the 
vectors $\bc_i$ associated to the circuits
\begin{equation}
	c_i = \{\alpha_{i-1}, \alpha_i, \alpha_{i+1}\},
\quad \text{for } i = 1, \dots, m. \label{Equation:DistinguishedUnivariateCircuits}
\end{equation}
In particular, the faces of $\cR_A$ correspond bijectively to index sets
\begin{equation}
\label{eqn:UnivariateSimplicialSoncSupport}
I = \{i_1, \dots, i_k\} \subset \{1, \dots, m\}.
\end{equation}
Note that only index sets containing $1$ and $m$ give a piece of the
boundary of the sonc-cone which intersects the
affine space $a_0 = a_d = 1$.

\begin{example}
\label{ex:DimensionOneBoundaryLemma}
The following modest example is, in fact, one of the main building blocks
for our proof of Theorem \ref{Thm:UnivariateUniqueTropical} and, in effect, Theorem \ref{thm:Main}.
Consider the integral support set $A = \{0,1,2,3,4\}$, and the two simplicial circuits 
$c_1 = \{0, 1, 2\}$ and $c_3 = \{2, 3, 4\}$.
For $i = 1, 2, 3$, let $g_i(z)$ be a (polynomial) simplicial agiform supported on $c_i$,
with singular loci $w_i$, and consider a polynomial
\begin{equation}
\label{eqn:UniExample1}
f(z) = t_1g_1(z) +t_3 g_3(z),
\end{equation}
where $t_1, t_2 > 0$. We claim that $f$ belongs to the boundary of $\Splus_A$ if an only if $w_1 \leq w_3$.
By Proposition \ref{pro:AllCircuitsSingularPoint}, we can assume that
$t_1 = t_3 = 1$.
Let us write
\[
g_1(z) = 1 - 2v_1 z + v_1^2 z^2 = (1-v_1z)^2 
\quad \text{ and } \quad
g_3(z) = z^2\left(v_3^2 - 2v_3 z + z^2\right) = z^2(v_3 - z)^2,
\]
where $v_1 = w_1^{-1}$ and $v_3 = w_3$. In particular, $w_1 \leq w_3$ is equivalent to that $v_1v_3 \geq 1$.
Proposition~\ref{pro:InteriorPositiveCombination} implies that the
polynomial $f(z)$ belongs to the interior
of $\Splus_A$ if and only if there is a decomposition
\begin{equation}
\label{eqn:UniExample2}
f(z) = h_1(z) + h_2(z) + h_3(z)
\end{equation}
where $h_1$ and $h_3$ are agiforms supported on $c_1$ respectively $c_3$, and 
$h_2(z) = b_0z - b_1z^2 + b_2 z^3$
is positive on $\R_+$.
Substituting \eqref{eqn:UniExample1} for the left hand side of \eqref{eqn:UniExample2}
and identifying coefficients, we find that there exists a $\boldsymbol{\zeta} = (\zeta_1,\zeta_2)$.
 such that
\[
h_1(z) = g_1(\zeta_1 z) 
\quad \text{ and } \quad
h_3(z) = \zeta_2^4\, g_3(z/ \zeta_2)
\]
Solving \eqref{eqn:UniExample2} for $b_0, b_1, $ and $b_2$, we obtain that
\[
b_0(\boldsymbol{\zeta}) = 2v_1(\zeta_1-1), \quad
b_1(\boldsymbol{\zeta})= v_1^2(\zeta_1^2-1) + v_3^2(\zeta_2^2 -1), \quad \text{and} \quad
b_2(\boldsymbol{\zeta}) = 2v_3(\zeta_2-1).
\]
In particular, if $h_2(z)$ is positive on $\R_+$ then $\boldsymbol{\zeta} > 1$ component-wise,
which in turn implies that $b_1 > 0$. Hence, $f(z)$ belongs to the interior of
$\Splus_A$ if and only we can find $\boldsymbol{\zeta} > 1$ such that
the discriminant of $h_2(z)$ is positive. Let us denote said discriminant by $p(\boldsymbol{\zeta})$,
so that
\[
p(\boldsymbol{\zeta}) = 4 b_0(\boldsymbol{\zeta}) b_2(\boldsymbol{\zeta}) - b_1(\boldsymbol{\zeta})^2.
\]
Let $q(\boldsymbol{\xi}) = p(1+\boldsymbol{\xi})$.
After expanding, notice that $q(\boldsymbol{\xi})$ contains exactly one monomial with a positive coefficient. Its exponent vector
belongs to a facet $F$ of the Newton polytope of $q$. In particular, there exists $\boldsymbol{\xi} > 0$ such that $q(\boldsymbol{\xi})$
is positive if and only if there exists a $\boldsymbol{\xi} > 0$ such that the extremal polynomial
\[
 q_F(\boldsymbol{\xi}) =  8v_1v_3 (2-v_1 v_3) \xi_1 \xi_2 -4 v_1^4 \xi_1^2  - 4 v_3^4 \xi_2^2
\]
 is positive. This is the case if and only if  $v_1v_3 < 1$. 
\end{example}

\begin{thm}
\label{Thm:UnivariateUniqueTropical}
Let $f \in \R^A$ be such that the singular sonc-support $S(f)$ is given by the index
set $I = \{i_1, \dots, i_k\}$. Then, $f$ has a unique sonc-decomposition
\begin{equation}
\label{eqn:UnivariateBoundarySoncDecomposition}
f(x) = g_{i_1}(x) + \dots + g_{i_k}(x),
\end{equation}
where $g_i(x)$ is supported on $c_{i}$,
with a singular points at $w_i$,
and where $w_{i_1} \leq \dots \leq w_{i_k}$.
\end{thm}

\begin{proof}
To see that the sonc-decomposition is unique, notice that
the singular loci of the agiforms $g_i(x)$ are uniquely defined by 
Lemma~\ref{lem:CircuitSoncDecomposition}. Hence, if $f(x)$ admits two distinct sonc-decompositions,
then there is a linear combination of  $g_{i_1}, \dots, g_{i_k}$ which vanishes identically, yielding a contradiction, as  these agiforms are linearly independent over $\R$.

To prove that $w_{i_1} \leq \dots \leq w_{i_k}$, it suffices to consider a sum of two minimal simplicial agiforms
$g_{i_1}$ and $g_{i_2}$. There are three cases to consider, depending on 
the dimension of the intersection
\[
P = \cN(c_{i_1}) \cap \cN(c_{i_{2}}).
\]

\smallskip
\underline{Case 1}: If $P$ is a line segment, then
$w_{i_1} = w_{i_{2}}$ by Theorem~\ref{thm:OverlappingConstraint}.

\smallskip
\underline{Case 2}: Consider the case that $P$ consists of a single point $\alpha_j$.
There is no loss of generality in assuming that
\[
A = \{\alpha_0, \alpha_1, \alpha_2, \alpha_3, \alpha_4\},
\]
and that $j = 2$ (so that $i_1 = 1$ and $i_2 = 3$). Choose $\varepsilon > 0$ such that
\[
[\alpha_2 - 2 \varepsilon, \alpha_2 + 2 \varepsilon] \subset [\alpha_{1}, \alpha_{3}].
\]
Let us consider $f(x)$ as an exponential sum supported in the set
\[
\hat A = A \cup \{\alpha_2  -2\varepsilon, \, \alpha_2 - \varepsilon, \, \alpha_2  +  \varepsilon, \, \alpha_2 + 2 \varepsilon\}.
\]
By Proposition~\ref{pro:DiscSoncSuppWhenAddingPoints}, we can write $g_{1}(x)$ as a
positive combination of agiforms supported on the three minimal circuits
contained in $\{\alpha_{0}, \alpha_{1}, \alpha_2- 2 \varepsilon, \alpha_2 - \varepsilon, \alpha_2\}$,
and we can write $g_{3}(x)$ as a
positive combination of agiforms supported on the three minimal circuits
contained in $\{\alpha_2, \alpha_2+ \varepsilon, \alpha_2 + 2 \varepsilon, \alpha_{3}, \alpha_{4}\}$.
That is, there is no loss of generality in assuming that
\[
\alpha_{2 + k} = \alpha_2 + k\varepsilon, \quad k = -2, -1, 0, 1, 2.
\]
Hence, applying an affine transformation in $x$,
we can assume that $c_{i_1}\cup c_{i_2}$ is 
the support set from Example~\ref{ex:DimensionOneBoundaryLemma}.
It follows from said example that $w_{i_1} \leq w_{i_{2}}$.

\smallskip
\underline{Case 3}:
The last case is when there is an open line segment
separating $\cN(c_{i_1})$ and  $\cN(c_{i_{2}})$.
There is no loss of generality in assuming that
\[
A = \big\{ \alpha_0, \alpha_1, \alpha_2, \alpha_3, \alpha_4, \alpha_5, \alpha_6 \big\}
\]
and that $i_1 = 1$ and $i_2 = 5$. That is, there is no loss of generality
in assuming that $A$ contains a point $\alpha_3$ located
in the line segment separating $\cN(c_{i_1})$ and  $\cN(c_{i_{2}})$.

Let $g_1$ and $g_5$ be minimal simplicial agiforms with singular loci $w_1$ and $w_5$ respectively.
Assume that the exponential sum 
\[
f(x) = g_1(x) + g_5(x)
\]
belongs to the boundary of the sonc-cone $\Splus_A$. We need to show that
$w_1 \leq w_5$.

Since $f(x)$ belongs to the boundary of the sonc-cone, and since the leading and 
the constant term of $f$ have nonvanishing coefficients, Corollary~\ref{cor:RegularSoncSupporIntersection} implies
that the regular sonc-support of $f$ is empty. It follows that the exponential sum
\[
p_{\varepsilon}(x) = f(x) - \varepsilon \,e^{x \alpha_3}
\] 
does not belong to the sonc-cone. On the other hand, if $g_3$ denotes some simplicial agiform
supported on the minimal circuit $c_3 = \{\alpha_2, \alpha_3, \alpha_4\}$, then
\[
q_{\varepsilon}(x) = f(x) + \varepsilon \,g_3(x)
\] 
belongs to the sonc-cone. It follows that there is a convex combination
\begin{equation}
\label{eq:ApproximatingSequence}
h_\varepsilon(x) = s_1(\varepsilon) \, p_{\varepsilon}(x) + s_2(\varepsilon)  q_{\varepsilon}(x)  = f(x)  - \varepsilon \,s_1(\varepsilon) \, e^{x \alpha_3} + \varepsilon\, s_2(\varepsilon)  \,g_3(x),
\end{equation}
where $s_1(\varepsilon) + s_2(\varepsilon)  = 1$, which belongs to the boundary of the sonc-cone.
Note that this is not a sonc-decomposition of $h_\varepsilon(x)$ (since one coefficient is negative),
and that the regular sonc-support of $h_\varepsilon(x)$
is empty.

By the first paragraph of this proof, there exists a unique sonc-decomposition
\[
h_\varepsilon(x) = \sum_{i=1}^5 t_{i}(\varepsilon)\, \<\varphi_A(x-w_i(\varepsilon)), \bc_i\>,
\]
using the minimal circuit $c_1, \dots, c_5$. Since the constant and leading coefficient appears in $h_\varepsilon(x)$ with a positive coefficient, we have that $t_1(\varepsilon), t_5(\varepsilon) > 0$.
Since the monomial $e^{x \alpha_3}$ appears in $h_\varepsilon(x)$ with a negative coefficient,
we have that $t_3(\varepsilon) > 0$. Hence, the first two cases imply that
\[
w_1(\varepsilon) \leq w_3(\varepsilon) \leq w_5(\varepsilon)
\]
for all $\varepsilon > 0$. Taking limits as $\varepsilon \rightarrow 0$,
we obtain that $w_1 \leq w_5$.
\end{proof}

\section{Tropical arrangements of singular loci}
\label{sec:TropicalArrangements}

In this section we show that the singular loci of the simplicial agiforms appearing
in a sonc-decomposition, in the multivariate case, are arranged according to a tropical hypersurface. Let us first introduce the basic elements of tropical geometry.

A \struc{\textit{tropical exponential sum}} is a max-plus expression
\[
\struc{\theta(\x)} = \max_{i = 0, \dots, d} \big(\, \omega_i + \<\x, \al_i\> \,\big),
\]
where $\struc{\omega_i} \in \R$. Let $\struc{\bom} = (\omega_0, \dots, \omega_d)$.
The corresponding \struc{\textit{tropical variety}} is the locus of all points $\x \in \R^n$ such that
the maximum is attained at least twice \cite{MS15}. If all $\al_i$ are distinct,
this locus coincides with the corner locus of the graph
of the function $\theta(\x)$. 

Let $\struc{\wp}$ denote the power set operator. 
There are two polyhedral complexes associated to $\theta(\x)$
obtained from the indicator function
$\struc{\iota} \colon \R^n \rightarrow \wp\,(\{0,\dots, d\})$, defined by
\[
\iota(\x) = \big\{\, i \in \{0, \dots, d\} \, \big| \, \theta(\x) = \omega_i + \<\x, \al_i\> \, \big\}
\]
The \struc{\emph{tropical complex $\struc{M}(\theta)$}} has cells for subsets $I \subset \{0,\dots, d\}$:
\[
\struc{\mu_I} = \big\{ \x \in \R^n \, | \, \iota(\x) \subset I \, \big\},
\]
where we discard empty cells. Each $\mu_I$ is a convex polyhedral cone (or convex polytope) in $\R^n$.
The tropical variety of $\theta$ is the codimension one skeleton of
the complex $M(\theta)$. Let
\[
\struc{\lambda_I} =  \conv\big(\{\al_i \, | \, i \in I\}\big).
\]
Then, the regular subdivision $\Lambda$ associate to $\theta$ is given by
\[
\Lambda = \big\{ \lambda_I \, \big| \, \mu_I \neq \varnothing \, \big\}.
\]
There is a duality relation between the complexes $\Lambda$ and $M$
given by
\[
\check{\mu}_I = \lambda_I.
\]

The following lemma is a multivariate version of Proposition~\ref{Thm:UnivariateUniqueTropical}.
We have chosen the
simplest formulation which suffices to prove the subsequent theorem.
 Let $\bc_1$ and $\bc_2$ be two full-dimensional simplicial
circuits separated by an affine hyperplane $H$. We orient $H$ by choosing a normal
vector $\bv$ such that
\[
\< \bv, \al_1\> \geq \< \bv, \al_2\>, \text{ for all } \al_1 \in \bc_1, \text{ and } \al_2 \in \bc_2.
\]

\begin{lemma}
\label{lem:MultivariateOrdered}
Let $\bc_1$ and $\bc_2$ be full-dimensional simplicial circuits 
whose Newton polytopes intersect
in a common facet $F$, and let $H = \Aff(F)$, with normal vector $\bv$
chosen as above.
For $i= 1, 2$, let $g_i \in \R^{\bc_i}$ be a simplicial agiform with singular loci $\bw_i$.
Let $g(\x) = g_1(\x) + g_2(\x)$.
Then, $g$ belongs to the boundary of the sonc-cone in $\R^{\bc_1 \cup \bc_2}$
if and only if  $\<\bw_1, \bv\> \geq \<\bw_2, \bv\>$.
\end{lemma}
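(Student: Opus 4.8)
The strategy is to reduce this multivariate statement to the univariate building block from Example~\ref{ex:DimensionOneBoundaryLemma} (equivalently, to the overlapping‐facet case of Theorem~\ref{thm:OverlappingConstraint} combined with the one‐dimensional argument), by slicing along the normal direction $\bv$ to the common facet $F$. First I would set up coordinates: after an affine change of variables we may assume $\Aff(F)^\perp$ is spanned by $\bv$, that $H=\Aff(F)=\{\x : \<\x,\bv\>=0\}$, and that $\cN(\bc_1)$ lies on the side $\<\x,\bv\>\ge 0$ and $\cN(\bc_2)$ on the side $\<\x,\bv\>\le 0$. By \eqref{eqn:SingularLocus}, $\sing(g_i) = \bw_i + \Aff(\bc_i)^\perp$, and since each $\bc_i$ is full-dimensional, $\sing(g_i)$ is a single point, so the quantities $\<\bw_i,\bv\>$ are well defined and the claim is that $g_1+g_2$ is on the boundary of the sonc cone precisely when $\<\bw_1,\bv\> \ge \<\bw_2,\bv\>$, i.e. the singular points are ordered "correctly" across the separating hyperplane.

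The key steps, in order: (1) Observe that $g = g_1 + g_2$ has exactly one negative term (the monomial at the shared relative interior structure — more precisely, the two agiforms each contribute one negative coefficient, located at the relative interior points of $\bc_1$ and $\bc_2$, but the only way $g$ fails to have at most one negative term is if these interior points differ, which is the generic situation, so I must handle the genuinely two-negative-term case directly rather than invoking Theorem~\ref{thm:OneNegativeTerm} naively). (2) For the "only if" direction: if $\<\bw_1,\bv\> < \<\bw_2,\bv\>$, I would exhibit $g$ in the interior of the sonc cone by using Proposition~\ref{pro:InteriorPositiveCombination} — concretely, subtract small positive multiples of fixed agiforms/monomials supported on $\bc_1\cup\bc_2$ and show the remainder is still sonc. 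The cleanest route is to mimic Example~\ref{ex:DimensionOneBoundaryLemma}: parametrize re-scalings $g_i(\x) \mapsto$ torus-translates of $g_i$ with singular locus moved toward $H$, identify the coefficients of the "error" polynomial, reduce to positivity of a single extremal (facet) binomial-type expression in the Newton polytope of the error, and check its sign is governed exactly by the inequality $\<\bw_1,\bv\>$ vs $\<\bw_2,\bv\>$. (3) For the "if" direction, i.e. $\<\bw_1,\bv\>\ge\<\bw_2,\bv\>$ implies boundary: apply Corollary~\ref{cor:RegularSoncSupporIntersection} / Theorem~\ref{thm:OverlappingConstraint} to conclude that any sonc-decomposition of $g$ must have empty regular part on $\bc_1\cup\bc_2$ and singular support contained in $\{\bc_1,\bc_2\}$, hence $R(g) \ne (\bc_1\cup\bc_2)\cup C$, so $g$ is on the boundary by Proposition~\ref{pro:InteriorPositiveCombination}.

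The technical heart — and the main obstacle — is step (2), the "only if" direction: I must produce an honest sonc-decomposition of $g$ with strictly more support than $\{\bc_1,\bc_2\}$ when the singular points are in the wrong order. Here the multivariate geometry is more delicate than the univariate case because moving the singular locus of an agiform along $\bv$ interacts with the many other exponents of $\bc_i$ that are not on the facet $F$. The plan to tame this is: reduce, via Proposition~\ref{pro:DiscSoncSuppWhenAddingPoints}, to adding a point $\al_3$ of $A$ lying in $H$ (or in the relative interior of $F$), so that we can split off a third simplicial agiform $g_3$ supported on a circuit straddling $H$; then the three agiforms $g_1,g_3,g_2$ have pairwise facet-overlapping or point-overlapping Newton polytopes, and the chain inequalities $\<\bw_1,\bv\>\le\<\bw_3,\bv\>\le\<\bw_2,\bv\>$ from the already-proven cases force $\<\bw_1,\bv\>\le\<\bw_2,\bv\>$, while in the strictly-wrong-order case one can slide $\bw_3$ to realize $g$ as a positive combination hitting the monomial at $\al_3$, landing in the interior by Theorem~\ref{thm:OneNegativeTerm} applied after the coefficient cancellation. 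The bookkeeping — showing the "error" expression has a single positive monomial on a facet of its Newton polytope, exactly as in Example~\ref{ex:DimensionOneBoundaryLemma} — is where the real work lies, but it is a direct generalization of the calculation already carried out there.
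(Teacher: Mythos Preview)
Your destination is right --- reduce to the univariate building block --- but your route is the wrong one, and the detour creates real gaps. The paper's proof is much shorter because it slices \emph{transversally} to $H$, not \emph{within} $H$.

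Concretely, the paper first invokes Theorem~\ref{thm:OverlappingConstraint}: either $g$ is already interior, or the two singular points $\bw_1,\bw_2$ lie on a common line $\ell$ with direction $\bv$. Then it takes a translate $\hat\ell$ of $\ell$ passing through the interiors of both $\cN(\bc_1)$ and $\cN(\bc_2)$, and picks one-dimensional simplicial circuits $\hat\bc_i \subset \cN(\bc_i)\cap\hat\ell$. By Proposition~\ref{pro:DiscSoncSuppWhenAddingPoints}, each $g_i$ admits a sonc-decomposition containing an agiform $\hat g_i$ supported on $\hat\bc_i$, with the same singular point $\bw_i$. Now $\hat g_1 + \hat g_2$ is literally a univariate problem in the parameter $x$ along $\bv$, and Proposition~\ref{lem:UnivariateUniqueTropical} (which already encapsulates Example~\ref{ex:DimensionOneBoundaryLemma}) finishes it. No multivariate recomputation of the ``single positive monomial on a facet'' structure is needed.

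Your plan instead adds a point $\al_3$ \emph{in} $H$ and looks for a third circuit $\bc_3$ ``straddling $H$''. Two problems. First, the ``already-proven cases'' you invoke for the pairs $(g_1,g_3)$ and $(g_3,g_2)$ do not exist: this lemma \emph{is} the facet-overlap case, so appealing to it for subsidiary pairs is circular, and Theorem~\ref{thm:OverlappingConstraint} only gives you an equality constraint (common normal translate), not the inequality you need. Second, your step (3) is also circular: Corollary~\ref{cor:RegularSoncSupporIntersection} requires $\bc_i\in S(g)$ as a hypothesis, which is exactly what ``$g$ is on the boundary'' would give you --- you cannot use it to prove that $g$ is on the boundary. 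Finally, the promised ``direct generalization'' of the explicit calculation in Example~\ref{ex:DimensionOneBoundaryLemma} to full-dimensional $\bc_i$ is not direct at all: the error polynomial now has many monomials off the facet $F$, and there is no reason its Newton polytope retains the one-positive-term-on-a-facet structure. The transversal slice avoids all of this by collapsing the problem to dimension one before any computation is done.
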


\begin{proof}
By Theorem~\ref{thm:OverlappingConstraint},
either $g$ belongs to the interior of the sonc-cone,
 or the singular loci $\bw_1$ and $\bw_2$ of $g_1$ respectively $g_2$ are contained 
in the same line $\ell$ with directional vector $\bv$.
After a translation, we can assume that $\ell$ passes through the origin,
and define $w_i$ by that $\bw_i  = w_i \bv$.
Consider a translate $\hat \ell$ of $\ell$ passing through the interior
of $\cN(\bc_1)$ and $\cN(\bc_2)$. Let
$\widehat \bc_i$ be a one-dimensional simplicial circuit
contained in $\cN(\bc_i)\cap \hat \ell$.
By Proposition~\ref{pro:DiscSoncSuppWhenAddingPoints},
$g_i$ admits a sonc-decomposition which contains a simplicial
agiform $\hat g_i$ supported on $\widehat \bc_i$.
Let $\hat g (\x) = \hat g_1(\x) + \hat g_2(\x)$.
It suffices to show that either $\hat g$ belongs to the interior of the sonc-cone in 
$\R^{\hat \bc_1 \cup \hat \bc_2}$,
or $\<\bw_1, \bv\> \geq \<\bw_2, \bv\>$.
This follows from Proposition~\ref{Thm:UnivariateUniqueTropical};
the singular loci of univariate exponential sum $x \mapsto \hat g_i( x \bv)$ is 
$w_i$, and $\<\bw_1, \bv\> \geq \<\bw_2, \bv\>$ is equivalent
to that $w_1 \geq w_2$.
\end{proof}

Let us return to the third case in the proof of Theorem~\ref{Thm:UnivariateUniqueTropical},
when we considered two one-dimensional simplicial circuits whose Newton polytopes
where separated by a line segment. In order to complete the proof, we picked a point
in the Newton polytope $\cN$, but outside of the Newton polytopes
of each simplicial circuit. We use a similar trick in the multivariate setting.

\begin{definition}
Let $f \in \R^A$ be an exponential sum with singular sonc-support $S(f)$.
Let $P$ be a polytope with vertices in $A$. We say that $P$ is \struc{\emph{compatible}}
with $S(f)$ if, first, $A$ contains a relative interior point of $P$ and, second,
all simplicial circuits contained in $A\cap P$ are contained in $S(f)$.
Let \struc{$\Gamma_f$} be the set whose
elements are the polytopes $P$ (and all of their faces) that are compatible with $S(f)$
and maximal with respect to inclusion. Denote by $\overline \Gamma_f$ the union of all polytopes $\gamma \in \Gamma_f$.
\end{definition}

\begin{example}
Consider the univariate support set $A = \{0,1, 2, 3, 4, 5, 6, 7\}$, and the
singular support set $S$ generated by the simplicial circuits $c_1 = \{0,1,2\}$,
$c_5 = \{4, 5, 6\}$, and $c_6 = \{5, 6, 7\}$. There are four polytopes $P$
that are compatible with $S$: the line segments $[0, 2]$, $[4, 6]$, $[5, 7]$, and $[4, 7]$. Out of these,
$[0, 2]$ and $[4, 7]$ are maximal with respect to inclusion.
Hence,
\[
\Gamma_f = \big\{ [0,2], [4, 7], 0, 2, 4, 7\big\}.\qedhere
\]
\end{example}

\begin{lemma}
If $\gamma_1, \gamma_2 \in \Gamma_f$ are full-dimensional and distinct, then their intersection
is not full-dimensional.
\end{lemma}

\begin{proof}
Assume that $U = \gamma_1 \cap \gamma_2$ is full-dimensional; we must show that $\gamma_1 = \gamma_2$.
By Proposition~\ref{pro:DiscSoncSuppWhenAddingPoints}, there is no loss of generality 
in assuming that $U$ contains a full-dimensional simplicial circuit $c$.
For $i = 1, 2$, since $\gamma_i \in \Gamma_f$, Proposition~\ref{pro:AllCircuitsSingularPoint} implies that any truncation of a sonc-decomposition
of $f$ to $\gamma_i$ has a singular point $\bw_i$. Since $c$ is contained
in both $\gamma_1$ and $\gamma_2$, we conclude from Theorem~\ref{thm:OverlappingConstraint}
that $\bw_1 = \bw_2$; denote this point by $\bw$.

Assume that there exists $\al \in \gamma_2 \setminus \gamma_1$. 
By Lemma~\ref{prop:BasisSimplexCircuits}, there is a simplicial
circuit $c_0 \subset c \cup \{\al\}$ containing $\al$ as a vertex. Then, $c_0 \subset \gamma_2$,
so that $c_0 \in S(f)$. Hence, there is a sonc-decomposition of $f$ containing
an agiform $g_0$ supported on $c_0$ with $g_0(\bw) = 0$.

Let $A_1 = A \cap \gamma_1$, and consider $A_0 = A_1 \cup \{\al\}$. Since $\gamma_1$
is a maximal element of $\Gamma$, we have that $A_1$
has a relative interior point (i.e., a point of $A_1$ is relative interior to $\cN(A_1)$).
Since $A_1$ is full dimensional, we have that $\dim(A_1) = \dim(A_0)$, 
so that the dimension of the kernel of
$A_0$ is one greater than the dimension of the kernel of $A_1$. If $\bc_1, \dots, \bc_m$
is a basis for $\ker(A_1)$, then $\bc_0$ is linearly independent from
$\bc_1, \dots, \bc_m$, since its $\al$-coordinate is non-zero. Hence,
$\bc_0, \bc_1, \dots, \bc_m$ forms a basis for $\ker(A_0)$.

Since, for $i=0, \dots, m$, each simplicial agiform supported on $c_i$ which occurs in a
sonc-decomposition vanishes at $\bw$,
there is a sonc-decomposition of $f$ that contains the sum
\[
\sum_{i=0}^{m} t_i g_i(\x) = \big\<\varphi_A(\x - \bw), t_0\bc_0 + \dots + t_{m}\bc_{m}\big\>.
\]
where $t_1, \dots, t_, > 0$. It follows that for any simplicial circuit $c$ such that $\bc$
is a linear combination of $\bc_0, \dots, \bc_{m}$, we have that $c \in R(f)$ (cf. proof of Proposition~\ref{pro:DiscSoncSuppWhenAddingPoints}). But
$\bc_0, \dots, \bc_m$ is a basis for $\ker(A_0)$ and, hence, $C_0 \subset R(f)$,
where $C_0$ denote the set of all simplicial circuits in $A_0$.
Since $A_1$ has a relative interior point, and since $S(f)$ is non-empty, we can conclude that
$C_0 \subset S(f)$. It follows that $P = \conv(A_0)$ is compatible with $S(f)$, contradicting
the maximality of $\gamma_1 = \conv(A_1)$. 

We conclude that $\gamma_2 \subset \gamma_1$. Hence, $\gamma_2 = \gamma_1$
by maximality of $\gamma_2$.
\end{proof}

We say that $\Gamma_f$ \struc{\emph{covers}} a set $B$ if $B \subset \overline\Gamma_f$.
We say that $\Gamma_f$ is \struc{\emph{connected}} if $\overline{\Gamma}_f$ is connected.

\begin{thm}
\label{thm:MultivariateTropicalTheorem}
Let $f\in \cS_A^+$ be such that $\Gamma_f$ is connected.
Then, there is a tropical complex $M$,
with dual regular subdivision $\Lambda$, such that
\begin{enumerate}[label=\emph{(}\alph*\emph{)}]
\item $\Lambda$ covers $S(f)$. That is, for each $c \in S(f)$, there exists $\lambda \in \Lambda$ such that $c \subset \lambda$.
\item \label{item:c}If $c \subset\lambda = \check\mu$,
 and $g$ is a simplicial agiform supported on $c$ that appears in a sonc-decomposition  of $f$,
 then $\sing(g) \supset \Aff(\mu)$. 
 \end{enumerate}
\end{thm}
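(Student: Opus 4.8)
The plan is to construct the tropical complex $M$ — equivalently, the weight vector $\bom = (\omega_0,\dots,\omega_d)$ — directly from the singular loci of the agiforms appearing in a fixed reduced sonc-decomposition of $f$, and then verify the two duality/containment conditions. First I would fix a reduced sonc-decomposition $f(\x) = \sum_{\bc \in S(f)} t_{\bc}\,\<\varphi_A(\x-\x_{\bc}),\bc\> + \sum_{\al} t_\al e^{\<\x,\al\>}$ with all $t_{\bc}>0$. The key observation driving the construction is Lemma~\ref{lem:MultivariateOrdered}: whenever two full-dimensional simplicial circuits $\bc_1,\bc_2 \in S(f)$ have Newton polytopes meeting in a common facet $F$ with oriented normal $\bv$, the decomposition being on the boundary forces $\<\bw_1,\bv\> \geq \<\bw_2,\bv\>$. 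Reinterpreted, this says exactly that the function which assigns to a cell $\lambda$ the affine functional $\x \mapsto \<\x,\bw_\lambda\>$ (where $\bw_\lambda$ is a point of the common singular locus of the agiforms supported inside $\lambda$, well-defined up to $\Aff(\lambda)^\perp$ by Theorem~\ref{thm:OverlappingConstraint} and Lemma~\ref{lem:TruncationsSingularLocus}) is concave in the sense needed for these affine pieces to be the linearity domains of an upper convex hull. So I would define $\theta(\x)$ to be the pointwise minimum (after a sign flip, the max) of the affine functionals $\<\x,\bw_\lambda\> + c_\lambda$ over top cells $\lambda$ of $\Lambda_f$, with the constants $c_\lambda$ pinned down inductively so that adjacent pieces agree on shared facets — the connectedness of $\Gamma_f$ is precisely what makes this inductive pinning consistent (no monodromy around loops).

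Once $\bom$, hence $M$ and its dual subdivision $\Lambda$, is defined this way, part (a) is essentially by construction: each $\bc \in S(f)$ lies in some top cell of $\Lambda_f \subset \Lambda$, because $\ssp(f)=S(f)\subset C_\Lambda$ (using Proposition~\ref{prop:SoncInclusionTheorem} and the definition of the sonc-complex). For part (b), I would argue that the dual cell $\check\mu = \lambda_I$ of a cell $\mu$ of $M$ consists of exponents on which $\theta$ simultaneously attains its max via the functional $\<\x,\bw\>$ for the common $\bw$ over $\mu$; by the definition of $\theta$ this forces the singular locus of any agiform $g$ supported on $\bc \subset \check\mu$ to contain $\mu$'s affine span — concretely, $\sing(g) = \bw + \Aff(\bc)^\perp$ by \eqref{eqn:SingularLocus}, and $\Aff(\mu) \subseteq \bw + \Aff(\bc)^\perp$ follows from $\mu = \check{(\check\mu)}$ being the normal cone to $\lambda_I$ translated so that $\<\cdot,\bw\>$ is the supporting functional. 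The truncation lemma (Lemma~\ref{lem:TruncationsSingularLocus}) is what lets me pass between the singular locus of the truncation to a face and that of the whole, bridging the local statement on a single cell to the global one.

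The main obstacle I anticipate is the consistency of the constants $c_\lambda$ and, relatedly, showing that the inequality $\<\bw_1,\bv\>\geq\<\bw_2,\bv\>$ from Lemma~\ref{lem:MultivariateOrdered} is exactly the convexity condition for a \emph{regular} subdivision — i.e., that the affine pieces I glue genuinely form the graph of a concave PL function over all of $\cN(A)$, not merely a locally consistent one. Connectedness of $\Gamma_f$ handles the cycle condition for the constants, but I still need to check that lower-dimensional cells of $S(f)$ (circuits $\bc$ that are not full-dimensional, sitting inside proper faces) are compatible — here the reduction via Proposition~\ref{pro:DiscSoncSuppWhenAddingPoints} to enlarge $A$ and make the relevant circuits full-dimensional inside the face, together with Lemma~\ref{lem:TruncationsSingularLocus} applied to that face, should let me reduce to the full-dimensional adjacency case already handled by Lemma~\ref{lem:MultivariateOrdered}. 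A secondary subtlety is that $\bw_\lambda$ is only defined modulo $\Aff(\lambda)^\perp$ when $\lambda$ is not full-dimensional, so the functional $\<\x,\bw_\lambda\>$ must be interpreted on $\Aff(\lambda)$ and then extended; I would need to note this extension can always be chosen within the concave envelope, which again follows from the pairwise inequalities on shared facets.
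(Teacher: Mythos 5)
Your construction of $M$ from the singular loci, glued via the facet inequalities of Lemma~\ref{lem:MultivariateOrdered} and the compatibility of Theorem~\ref{thm:OverlappingConstraint} and Lemma~\ref{lem:TruncationsSingularLocus}, is essentially the paper's argument --- but only for the special case in which the sonc-complex $\Gamma_f$ covers $\cN(A)$, which is the first paragraph of the paper's proof. The genuine gap is the case $\vol\big(\cN(A)\setminus\Gamma_f\big)>0$: connectedness of $\Gamma_f$ does not give you facet-adjacency of the cells carrying the circuits of $S(f)$, and two circuits in $S(f)$ may have Newton polytopes separated by a region of $\cN(A)$ met by no circuit of $S(f)$ at all. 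Across such a gap Lemma~\ref{lem:MultivariateOrdered} simply does not apply, there is no shared facet on which to ``pin'' the constants $c_\lambda$, and it is not clear that your locally defined affine pieces extend to a single concave PL function on all of $\cN(A)$ --- i.e.\ that the subdivision you are building is regular and that a dual tropical complex exists at all. This is not a technicality: the univariate model of this situation is the third case in the proof of Proposition~\ref{lem:UnivariateUniqueTropical} (two circuits separated by an open segment), which already required an approximating-sequence argument rather than a direct gluing.

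The paper resolves this by a minimal-counterexample induction on $\vol(\cN(A)\setminus\Gamma_h)$: one inserts a generic point $\al$ into the gap, perturbs $f$ as in \eqref{eq:ApproximatingSequence2} to force a full-dimensional circuit through $\al$ into the singular sonc-support, proves (via the moment-map compactification) that the corresponding singular loci $\bw_j$ stay bounded and converge, and passes to the limit $p$, which has strictly smaller gap volume; the tropical complex for $p$ then restricts to one for $h$. Your proposal contains no mechanism for producing comparison data across the uncovered region, so as written it does not prove the theorem in general. A smaller point: the claim that connectedness rules out ``monodromy around loops'' is also unjustified --- connectedness lets you propagate the constants $c_\lambda$ but does not by itself close the cocycle condition around a cycle of cells; in the covered case this is supplied by the sign information in Lemma~\ref{lem:MultivariateOrdered} together with the collinearity of singular loci from Theorem~\ref{thm:OverlappingConstraint}, and you should say so explicitly rather than attribute it to connectedness.
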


Our proof is similar to that of Theorem~\ref{Thm:UnivariateUniqueTropical}.
In the multivariate setting, there is no longer a finite number of cases, and there is no canonical sonc-decomposition to consider. For readers that are unfamiliar with \emph{regular} subdivision,
we note that a subdivision of the Newton polytope $\cN$ is regular if and
only if there exists a tropical hypersurface whose tropical complex is dual to $\Lambda$.
That is, that $\Lambda$ is regular is implied by the existence of $M$.

\begin{proof}[Proof of Theorem~\ref{thm:MultivariateTropicalTheorem}]
Assume first that $\Gamma_f$ covers the Newton polytope $\cN$.
Then, $\Lambda = \Gamma_f$ is a subdivision of $\cN$ 
(which, a priori, need not be regular).
By definition of $\Gamma_f$, each full-dimensional $\lambda \in \Lambda = \Gamma_f$ contains 
a relative interior point. Using Proposition \ref{pro:AllCircuitsSingularPoint}, we can conclude 
that if $A_\lambda = \lambda \cap A$,
then the truncation of $f$ to $A_\lambda$ has a 
singular point $\bw_\lambda$.
In particular, by Lemma~\ref{lem:TruncationsSingularLocus}, each simplicial circuit contained in $\lambda$
has a singular loci containing $\bw_\lambda$.
Finally, by Lemma~\ref{lem:MultivariateOrdered}, the collection $\{\bw_\lambda\}$ is the vertex set of a  tropical hypersurface dual to $\Lambda$. It follows from the fact that there exists a tropical
hypersurface dual to $\Lambda$, that $\Lambda$ is a regular subdivision. 

\smallskip
For the general case, we give a proof by  minimal counterexample over the volume 
$\vol(\cN \setminus \Gamma_f)$. 
However, as $A$ is a real configuration,
the set of possible volumes is not well-ordered, unless we impose some restrictions.

Assume that there exists a counterexample $f$. That is, assume that $f$ belongs to the boundary of the 
sonc-cone, but there is no tropical complex $M$ fulfilling (\emph{a}) and (\emph{b}).
We can assume that $A$ is minimal in the sense that for any exponential sum
supported on a strict subset of $A$,
there is a tropical complex $M$ fulfilling (\emph{a}) and (\emph{b}).
Then, minimality of $A$ implies that $\Gamma_f$ covers $A$,
for otherwise we could replace $f$ by its truncation to the singular
part of the sonc-decomposition, which would have strictly smaller support.

Consider the family $\cF$ consisting of all exponential sums $h$ such that
\begin{enumerate}[label={(P{\arabic*})}]
\item $A \subset \supp(h) \subset \cN$, \label{item:6Proof1}
\item $h$ belongs to the boundary of the sonc-cone in $\R^{\supp(h)}$, \label{item:6Proof2}
\item $\Gamma_h$ is connected and covers $\supp(h)$, \label{item:6Proof4}
\item any point in $\supp(h) \setminus A$ is an interior point of $\Gamma_h$,
and\label{item:6Proof5}
\item there is no tropical complex $M$ fulfilling (\emph{a}) and (\emph{b}) for $h$.
\end{enumerate}
Then, $f \in \cF$, so that $\cF$ is nonempty.
For each $h \in \cF$, denote by $\Delta_h$ the complement
\[
\struc{\Delta_h} = \cN \setminus \Gamma_h.
\]
By property \ref{item:6Proof5}, $\overline{\Gamma}_h$ is a union 
of polytopes whose vertices are contained in $A$ and, hence, 
there are only finitely many possible values for the
Euclidean volume $v(\Delta_h)$. 
Hence, there exists some $h \in \cF$
such that the volume $v(\Delta_h)$ is minimal.
By the first paragraph of this proof, $v(\Delta_h)$ is positive.

Let $\al \in \Delta_h$ be generic in the sense the each simplicial circuit in
$D = \supp(h) \cup \{\al\}$ containing $\al$ is full-dimensional.
Choose a sequence $\{\varepsilon_j\}_{j \in \N}$ of positive numbers with limit zero.
As in \eqref{eq:ApproximatingSequence}, we obtain, for each $\varepsilon_j$,
an exponential sum
\begin{equation}
\label{eq:ApproximatingSequence2}
h_j(\x) = h(\x)  - \varepsilon_j \,s_1(\varepsilon_j) \, e^{\<\x, \al\>} + \varepsilon_j\, s_2(\varepsilon_j)  \,g_j(\x),
\end{equation}
where $s_1(\varepsilon_j) + s_2(\varepsilon_j) = 1$, which belongs to the boundary of the sonc-cone in  $\R^{D}$. Note that \eqref{eq:ApproximatingSequence2} is not a sonc-decomposition of $h_j(\x)$.

Notice that the monomial $e^{\<\x, \al\>}$ appears in $h_j$ with a negative coefficient.
Hence, for each $j$, there is a simplicial circuit $c_{j} \subset R(h_j)$ with $\al$ as its relative interior point. By choosing a subsequence, we can assume that $c = c_{j}$ is independent of $j$. Fix a sonc-decomposition of $h_j(\x)$ containing a term
\[
  t_j \, \< \varphi_A(\x - \bw_j), \, \bc\>,
\]
where $t_j > 0$.
By Lemma~\ref{lem:LocationOfSingularStrata}, 
the exponential sum $h_{j}$ still belongs to the boundary of the sonc-cone 
if we replace the coefficient $t_j$ with an arbitrary $s_j > 0$.

We now claim that the sequence $\{\bw_j\}_{j \in \N}$ converges in $\R^n$.
If not, then after compactifying $\R^n$ to the polytope $\cN$ using the moment map associated to $A$, it converges to some strict face $F \preccurlyeq \cN$.
Then, with $s_j = \min(\varphi_A(\bw_j))$,
\[
 \lim_{j\rightarrow 0} s_j \, \< \varphi_A(\x - \bw_j), \, \bc\>
\]
is a sum of monomials from $F$ with positive coefficients.
By Lemma~\ref{lem:LocationOfSingularStrata}, we find that
\[
p(\x)= \lim_{j\rightarrow 0} \big(h_{j}(\x) +  s_j \, \< \varphi_A(\x - \bw_j), \, \bc\>\big) 
= h(\x) +  \lim_{j\rightarrow 0}s_j \, \< \varphi_A(\x - \bw_j), \, \bc\>
\]
belongs to the boundary of the sonc-cone.
Hence, a sum including $h$ and monomials from $A$ belongs to the boundary of the sonc-cone.
But $\Gamma_h$ is connected by \ref{item:6Proof4} and, hence, if \emph{any} monomial
appears in a sonc-decomposition of $p$, then repeated use of 
Lemma~\ref{lem:CircuitSoncDecomposition} gives that all monomials appears in some
sonc-decomposition of $p$. Hence, $p$ belongs to the interior of $\Splus_A$ by
Proposition~\ref{pro:InteriorPositiveCombination}, a contradiction.
Hence,  $\{\bw_j\}_{j \in \N}$ converges in $\R^n$. 

By Lemma~\ref{lem:LocationOfSingularStrata}, we find that
\[
p(\x) = \lim_{j\rightarrow 0} \big(h_{j}(\x) +  \, \< \varphi_A(\x - \bw_j), \, \bc\>\big)
= h(\x) +   \lim_{j\rightarrow 0}  \, \< \varphi_A(\x - \bw_j), \, \bc\>
\]
belongs to the boundary of the sonc-cone. That is, $p$ fulfills \ref{item:6Proof2}.
We have that $\supp(p) = \supp(h) \cup \{\al\}$, from which it follows that
$p$ fulfills \ref{item:6Proof1}. 
Since $h$ fulfills  \ref{item:6Proof4}, and $h$ is a summand of $p$, we have that
$M(p)$ is empty. Indeed, as noted above, if a sonc-decomposition of $p$
contains any monomial with a positive coefficient, then there is a sonc-decomposition
of $p$ that contains all monomials, contradiction that $p$ belongs to the boundary of the sonc-cone. 
It follows that the $\Gamma_p$ covers $\Gamma_h$,
from which we conclude, first, that $\Gamma_p$ covers $\supp(p) = \supp(h) \cup \{\al\}$ and, 
second, that $\Gamma_p$ is connected, since $c$ intersects $\supp(h)$.
That is, $p$ fulfills \ref{item:6Proof4}.
To conclude that $p$ fulfills \ref{item:6Proof5}, it suffices to note that
$\al$ is an interior point of $\Gamma_p$, since $c$ is full-dimensional.

We have that $c \in S(p)$. Since $c$ is full-dimensional by construction, and 
intersects the relative interior of $\Delta_h$, we have that $v(\Delta_p) < v(\Delta_h)$.
Hence, $p \notin \cF$, by minimality of $h$.
It follows that there is a tropical complex $M$ fulfilling (\emph{a}) and (\emph{b}) for the exponential sum $p$.
As $h$ is a summand of $p$, the same tropical complex $M$ fulfills 
(\emph{a}) and (\emph{b}) for $h$, contradicting that $h \in \cF$.
\end{proof}

\section{The $\Lambda$-Discriminant}
\label{sec:PositiveDiscriminant}

\subsection{The proof of Theorem~\ref{thm:Main}}
Let us recall the definition of the $\Lambda$-discriminant from the introduction.
Let $A$ be a real configuration, and let $\Lambda$ be a regular subdivision of the Newton polytope $\cN$.
Then, the $\Lambda$-discriminant $D_\Lambda$ 
is the locus of all exponential sums of the form
\[
f(\x) = \sum_{\lambda \in \Lambda} \sum_{c \in C_\lambda} t_{c} \, \<\varphi_A(\x - \bw), \bc\>
 + \sum_{\al \notin A_\Lambda} t_{\al} \,e^{\<\x, \al\>},
\]
where $\bw = \{\bw_\lambda\}_{\lambda \in \Lambda}$ is a family
real parameters such that, for any pair $\lambda_i, \lambda_j \in \Lambda$
whose intersection $\lambda_i \cap \lambda_j$ is nonempty,
\[
\bw_{\lambda_i} - \bw_{\lambda_j} \in \Aff(\lambda_{i} \cap \lambda_{j})^\perp,
\]
and $\bt = \{t_c\}_{c \in C} \cup \{t_{\al}\}_{\al \in A}$ is a family of real parameters.

\begin{proof}[Proof of Theorem~\ref{thm:Main}]
Assume that $f \in \R^A$ belongs to the boundary of the sonc-cone.
If $f$ belongs to a coordinate hyperplane, then there is nothing
to prove. If not, then $S(f) \neq \varnothing$. Hence, $\Gamma_f$
is non-trivial. If $\Gamma_f$ is not connected, then we replace $f$ by its truncation to
one connected component of $\Gamma_f$.
Then, by Theorem~\ref{thm:MultivariateTropicalTheorem},
there is a tropical complex $M$, with dual regular subdivision $\Lambda$,
fulfilling properties (\emph{a}) and (\emph{b}).
We claim that $f$ belongs to $D_\Lambda$.
Indeed, the equations \eqref{eqn:Binomials} describe the linear relations
between vertices of the tropical complex $M$, which finishes the proof.
\end{proof}

\begin{remark}
\label{rem:HornKapranovParameterSpace}
Label the top dimensional cells of $\Lambda$ by $\{\lambda_1, \dots, \lambda_k\}$.
Equation \eqref{eqn:LambdaDiscriminant} gives a parametric
representation of the $\Lambda$-discriminant by a Horn-Kapranov-type map
$\Phi_\Lambda$, which in coordinates is given by
\begin{equation}
\label{eq:HornKapranov}
\Phi_\Lambda(\bt, \bw) =  \sum_{\lambda \in \Lambda} \sum_{c \in  C_\lambda} t_{c} \, \varphi_A(- \bw_{\lambda})* \bc
 + \sum_{\al \notin A_\Lambda} t_{\al} \,e_{\al},
\end{equation}
where $\struc{e_{\al}}$ denote the standard basis vector for the $\al$-coordinate.
Then, the parameter space of $\Phi_\Lambda$ is the
subspace of the product 
$\R^C \times \prod_{j=1}^k \R^n$,
where the coordinates of the $j$-th factor is $\bw_j$,
defined by the equalities \eqref{eqn:Binomials}.
\end{remark}

\subsection{Stratifying the boundary of the sonc-cone.}

Let us consider the integral case, when $A \subset \Z^n$.
Then, the map \eqref{eq:HornKapranov} is algebraic in the parameters
$\bt$ and $e^{\bw}$. In particular, $D_\Lambda$ is a
real algebraic variety.

\begin{remark}
Let $A \subset \Z^n$.
If $\Gamma_f$ is not connected, then each connected component of
$\Gamma_f$ gives a regular subdivision $\Lambda$ such 
that $f \in D_\Lambda$, where the implicit representations of
$D_\Lambda$ are in complementary sets of variables.
In particular, if we only wish to describe
the codimension one pieces of the boundary of the sonc-cone,
then there is no loss of generality in assuming that $\Gamma_f$ is connected.
\end{remark}

\begin{definition}
\label{def:Strata}
Let $A$ be integral.
Let $\struc{S_\Lambda} \subset D_\Lambda$ denote the subset of the $\Lambda$-discriminant
determined by that $\bt \geq 0$ and that $\bw$ arranged according to a tropical hypersurface dual to $\Lambda$. We call $S_\Lambda$ the \struc{\emph{strata}} associated to $\Lambda$.
\end{definition}

The benefit of considering $S_\Lambda$ rather than $D_\Lambda$, is that $S_\Lambda$
is completely contained in the sonc-cone.
Our main Theorem~\ref{thm:Main} can be sharpened to the following theorem.
\begin{thm}
\label{thm:MainStrata}
Let $A$ be integral. Then, the boundary of the sonc-cone $\Splus_A$ is contained in the union of the coordinate hyperplanes and
the strata $S_\Lambda$,
as $\Lambda$ ranges over all regular subdivision of the Newton polytope $\cN(A)$.
\end{thm}

\begin{proof}
Since the locus of exponential sums $f$ which has that $\Gamma_f$ disconnected 
is smaller dimensional, there is no loss
of generality in assuming that $\Gamma_f$ is connected.
Then, we can apply the same proof as for Theorem~\ref{thm:Main}.
\end{proof}

Let $A$ be integral. By Theorem~\ref{thm:MainStrata}, we have a covering of 
the boundary of the sonc-cone by semi-algebraic sets. However, in general, this covering need
not be a stratification.

\begin{remark}
\label{rem:StructureTheoremDimensionOne}
Let $A$ be a univariate integral configuration.
Each regular subdivision $\Lambda$ defines a unique
singular sonc-support, determined by an index set $I$
as in \eqref{eqn:UnivariateSimplicialSoncSupport}.
Let us denote by $S_I$ the set of all exponential 
sums $f(x)$ that has a sonc-decomposition as in \eqref{eqn:UnivariateBoundarySoncDecomposition},
where the singular loci $w_{i_1}, \dots, w_{i_k}$ fulfill that
\begin{equation}
\label{eqn:OrderedRoots}
w_{i_1} \leq \dots \leq w_{i_k}.
\end{equation}
We can stratify $S_I$ further, according to which of the inequalities 
\eqref{eqn:OrderedRoots} that are strict and which
are equalities. We label such a strata by grouping the indices
for which the singular loci are equal, with groups separated by a vertical bar. 
By Theorem~\ref{thm:OverlappingConstraint}, any two subsequent indices which
belong to $I$ must be grouped together.
\end{remark}

\begin{example}
\label{ex:01234}
Consider the family of univariate quartic polynomials
\[
f(z) = 1 + a_1 z + a_2 z^2 + a_3 z^3 +  z^4.
\]
associated to the support set $A = \{0,1,2,3,4\}$.
There are two two-dimensional strata
in the affine space $a_0 = a_4 = 1$,
labeled by $\{1,2,3\}$ and $\{1 \, | \, 3\}$.
\begin{table}[t]
\begin{center}
\def\arraystretch{1.5}
\begin{tabular}{c|l}
dim & S \\
\hline
2 & $\{1,2,3\}, \{1 \,|\, 3\}$ \\
1 & $\{1, 3\}$ \\
\end{tabular}
\end{center}
\caption{Dimension and labeling of the strata of the boundary of $\Splus_A$ for univariate quartics from Example \ref{ex:01234} in the affine piece $a_0 = a_d = 1$.}
\label{fig:01234b}
\end{table}
Let the notation of circuits $c_i$ be as in \eqref{Equation:DistinguishedUnivariateCircuits}.

Consider first the strata $S_{\{1,2,3\}}$. The index should be read as that 
all three simplicial circuits $\bc_1, \bc_2,$ and $\bc_3$ appears in one
group. That is, the simplicial agiforms supported on these circuits should
have a common singular point. Hence, $S_{\{1,2,3\}}$ consists of 
all polynomials of the form
\begin{align}
	f(z) = \<\varphi_A(z w^{-1}), t_1\bc_1+t_2\bc_2 + t_3 \bc_3\>. \label{Equation:ExampleUnivariateQuartics1}
\end{align}
By construction, such a polynomial $f(z)$ has a singular point at $z = w$, since
\[
f(w) = \<\varphi_A(1),  t_1\bc_1+t_2\bc_2 + t_3 \bc_3\> = 0
\]
and, therefore, the stratum $S_{\{1,2,3\}}$ is contained in the boundary of $\Pplus_A$.
Indeed, $S_{\{1,2,3\}}$ is contained in the classical $A$-discriminant; see the left picture of Figure \ref{fig:01234example}. 

Consider now the strata $S_{\{1\, | \,3 \}}$. The index should be read as
that the simplicial circuits $\bc_1$ and $\bc_3$ appear in separate groups.
That is, the stratum
$S_{\{1\, | \,3 \}}$ consists of all polynomials of the form
\begin{align}
	f(z)  = \<\varphi_A (zw_1^{-1}), t_1 \bc_1\> + \<\varphi_A(z w_3^{-1}), t_3 \bc_3\>.  \label{Equation:ExampleUnivariateQuartics2}
\end{align}
By Theorem \ref{Thm:UnivariateUniqueTropical} we have $w_1 \leq w_3$ and, in general, 
these two points do not need to coincide.
If $w_1 < w_3$, then at every point in $\R_{>0}$ at least one of the two exponential sums is
positive. Therefore, this strata comprises a part of the boundary, which is
contained in the strict interior of the nonnegativity cone $\Pplus_A$.

Finally, let us investigate the intersection $S_{\{1,3\}} = S_{\{1,2,3\}} \cap S_{\{1 \,|\, 3\}}$.
Considering an exponential sum $f \in S_{\{1 \,|\, 3\}}$ of the form \eqref{Equation:ExampleUnivariateQuartics2}. Then, $f \in S_{\{1,3\}}$ if and only $w_1 = w_3$.
That is, $S_{\{1,3\}}$ consists of all polynomials of the form
\begin{align}
f(z)  = \<\varphi_A (zw^{-1}), t_1 \bc_1 + t_3 \bc_3\>.
\end{align}

The algebraic hypersurfaces defined by the $\Lambda$-discriminants $D_{\{1,2,3\}}$ and $D_{\{1\, | \, 3\}}$
are pictured in Figure~\ref{fig:01234example},
together with the boundary of the sonc-cone.
\end{example}
\begin{figure}[t]
\includegraphics[width=45mm]{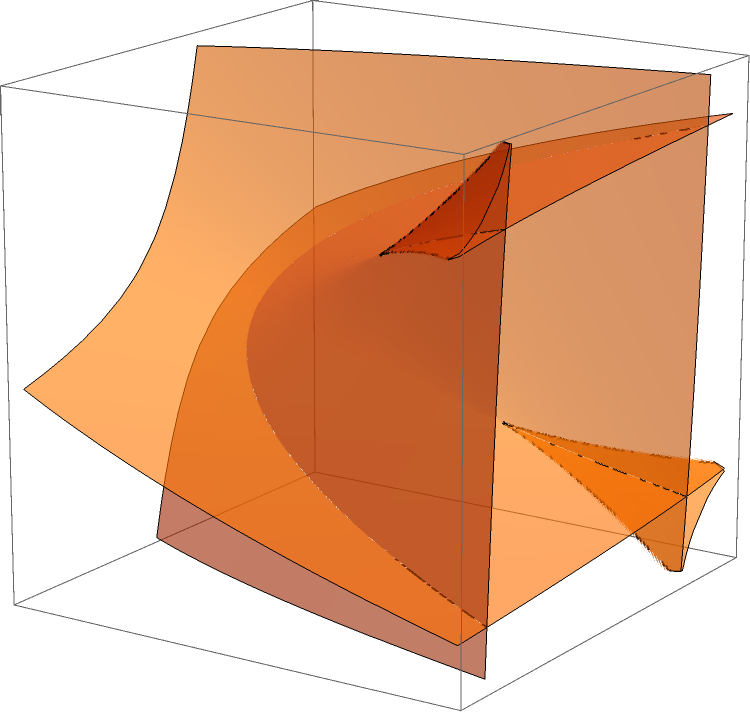}
\hspace{7mm}
\includegraphics[width=45mm]{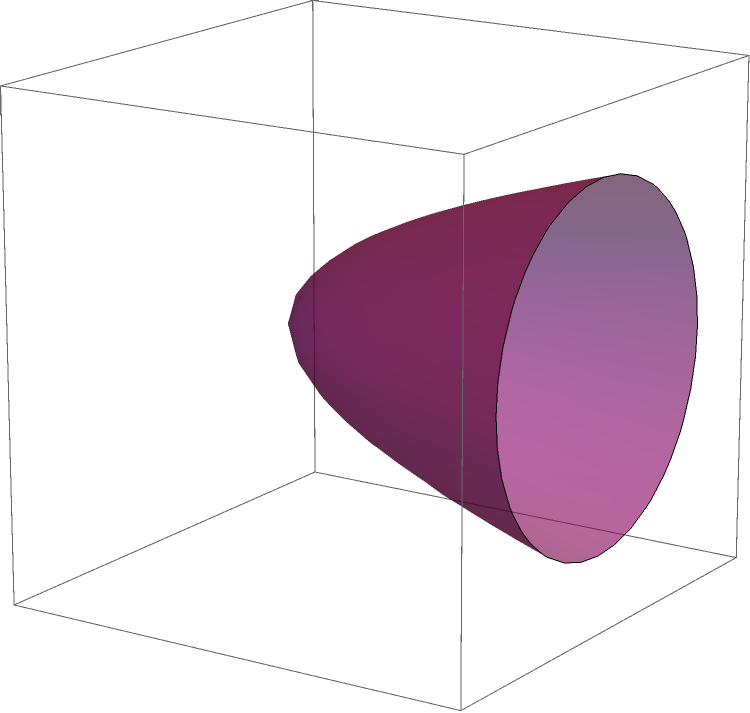}
\hspace{7mm}
\includegraphics[width=45mm]{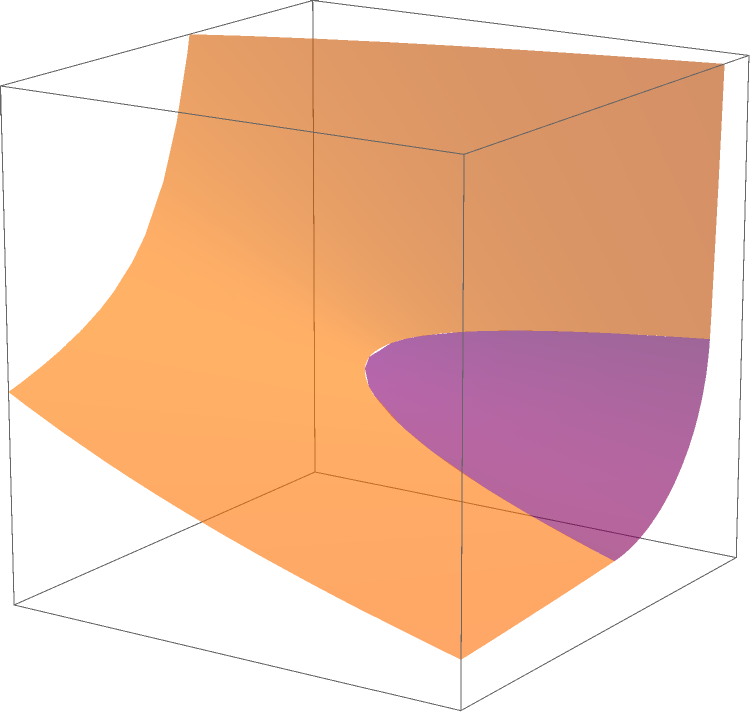}
\caption{The real Zariski-closures of the strata $S_{\{1,2,3\}}$ and $S_{\{1 \,|\, 3\}}$, 
and the boundary of the sonc-cone of univariate quartics from Example \ref{ex:01234}.
}
\label{fig:01234example}
\end{figure}

\begin{corollary}
The closure of $S_I$ is the union of all $S_J$, where $J$ 
can be obtained from $I$ by deleting indices or deleting bars. 
The dimension of $S_I \subset \R^A$, is the sum of the cardinality of $I$ and the number of groups.
\qed
\end{corollary}

In the univariate case, the regular subdivision $\Lambda$ uniquely determines
the sonc-support of an exponential sum $f \in S_\Lambda$.
It follows that two strata $S_\Lambda$
only intersect along common lower dimensional strata.
The main issue, in the multivariate case, is that 
two positive $\Lambda$-discriminants which do not coincide, may overlap
in a common Zariski-dense subset. The reason for this is that two regular
subdivisions can give rise to the same discriminants,
if they differ only in cells which do not contain any relative interior points
(e.g., compare two unimodular triangulations).
However, cells of the subdivision $\Lambda$ which contain no relative interior points, 
can still impose non-trivial restrictions on $\bw$ in Definition \ref{def:Strata}.
That is, two regular subdivisions which gives rise to the same discriminants,
need not give rise to the same strata.

\subsection{Properties of algebraic $\Lambda$-discriminants}
For completeness, let us mention some important properties of
$\Lambda$-discriminants in the integral case.

\begin{prop}
\label{pro:AtMostHypersurface}
Let $A$ be integral. Then, the (complex Zariski closure of the) $\Lambda$-discriminant is a unirational algebraic variety of codimension at least one.
If the codimension is one, then $\overline{D}_\Lambda$ is rational.
\end{prop}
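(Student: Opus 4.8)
The plan is to read off the structure of $D_\Lambda$ directly from the Horn--Kapranov-type parametrization $\Phi_\Lambda$ described in Remark~\ref{rem:HornKapranovParameterSpace}. First I would observe that, for integral $A$, the parameter space cut out inside $\R^C \times \prod_{j=1}^k \R^n$ by the binomial relations \eqref{eqn:Binomials} is, after the usual exponential substitution $z_{j,\ell} = e^{(\bw_j)_\ell}$, the set of positive real points of an irreducible affine toric variety: the relations \eqref{eqn:Binomials} are binomials with integer exponents (since $\Lin(\lambda_i\cap\lambda_j)$ is a rational subspace when $A$ is integral), and the $t_{\bc}, t_{\al}$ coordinates are free. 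Hence the parameter space is irreducible and unirational; in fact it is rational, being an open subset of the torus times an affine space defined by binomial equations whose solution set is a coordinate subtorus (a product of $\Q$-rational linear conditions on the exponents). Then $D_\Lambda$, being the image of this rational parameter space under the polynomial map $\Phi_\Lambda$, is a unirational variety, and its Zariski closure is irreducible.

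Next I would bound the codimension. The monomial $e^{\<\x,\al\>}$ for $\al \notin A(\Lambda)$ appears in \eqref{eqn:LambdaDiscriminant} with a free nonnegative coefficient $t_\al$, and for a full-dimensional cell $\lambda$ the coefficients $t_{\bc}$ for $\bc \subset \lambda$ sweep out (by Proposition~\ref{prop:BasisSimplexCircutis}, applied to $A\cap\lambda$) a full-dimensional subcone of $\ker(A\cap\lambda)$; combined with the translation freedom this shows the image cannot be all of $\R^A$ only because the $\bw_\lambda$-dependence is constrained — more simply, $D_\Lambda$ is a cone which is not the whole space because a generic $f \in \R^A$ has $S(f) = \varnothing$ while every element of $D_\Lambda$ has nonempty singular sonc-support (it lies in the boundary of the sonc cone, or at least on a discriminantal locus). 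So $\overline{D_\Lambda} \subsetneq \R^A$, i.e. the codimension is at least one.

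Finally, the rationality claim when the codimension equals one: a unirational variety of codimension one in affine space, i.e. a unirational hypersurface, is rational. The cleanest route is to invoke the fact that an irreducible hypersurface that is unirational is automatically rational — a hypersurface is ruled as soon as it is unirational of dimension $\geq 1$, since a dominant map from a rational variety and a generic fibration argument (or, for the specific maps $\Phi_\Lambda$ here, the explicit linearity of \eqref{eqn:LambdaDiscriminant} in the $\bt$-variables) exhibit a pencil of lines through a general point. Concretely, fixing the $\bw$-parameters, \eqref{eqn:LambdaDiscriminant} is \emph{affine-linear} in $\bt$; so a general point of $D_\Lambda$ lies on an affine-linear subspace of positive dimension contained in $D_\Lambda$, and projecting $D_\Lambda$ along that subspace realizes it as a rational variety birationally fibred by affine spaces, hence rational by Castelnuovo-type reasoning (or directly: a hypersurface birationally swept by a family of lines through a general point is rational).

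The main obstacle I expect is making the last step fully rigorous rather than hand-wavy: "unirational hypersurface implies rational" is a genuine classical theorem, but one wants to cite it cleanly or else argue via the explicit fibration by affine spaces coming from linearity of $\Phi_\Lambda$ in $\bt$; care is needed that this fibration is nontrivial on $\overline{D_\Lambda}$ exactly when $\dim D_\Lambda < \dim \R^A$, which is where the codimension-one hypothesis is used. Everything else — irreducibility, unirationality, and the codimension bound — follows formally from the parametrization and the earlier results on the Reznick cone and the boundary of the sonc cone.
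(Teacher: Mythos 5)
Your first step---the parameter space is cut out by integral binomial relations, hence is (the positive part of) a rational toric variety, and the image of a rational variety under $\Phi_\Lambda$ is unirational---is exactly the paper's argument and is fine. The other two steps have genuine gaps. For the codimension bound you argue that every element of $D_\Lambda$ has nonempty singular sonc-support, ``or at least lies on a discriminantal locus.'' But Definition~\ref{def:LambdaDiscriminant} sums over \emph{all} circuits $\bc\subset\lambda$ (not only simplicial ones) with \emph{unrestricted} real coefficients $t_{\bc}$, $t_{\al}$; the positivity and simpliciality restrictions enter only in Definition~\ref{def:PositiveLambdaDiscriminant}. So a general element of $D_\Lambda$ is not a sonc-decomposition, need not lie on the boundary of $\Splus_A$, and for a nontrivial subdivision need not even have a singular point (each summand is singular at its own $\bw_\lambda$, but the sum is singular nowhere in general). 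The fallback ``lies on a discriminantal locus'' is circular: that $\overline{D_\Lambda}$ is a proper subvariety is precisely what must be shown. The paper instead proves the bound by a direct parameter count: labelling the top-dimensional cells, using the relations \eqref{eqn:Binomials} and inclusion--exclusion over the intersections $A_I$, it shows that the parameter space of $\Phi_\Lambda$ has at most $n$ effective $\bw$-coordinates and $m$ effective $\bt$-coordinates, so the image has dimension at most $n+m$, which is one less than $\dim\R^A$.

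For the last step, the ``fact'' you lean on---that a unirational hypersurface is automatically rational because it is ruled---is false: a smooth cubic threefold in $\P^4$ is a unirational hypersurface that is not rational (Clemens--Griffiths), and ruledness does not imply rationality either ($C\times\P^1$ for a non-rational curve $C$). Your backup argument via linearity of $\Phi_\Lambda$ in $\bt$ only exhibits linear subspaces through a general point of $\overline{D_\Lambda}$, which again yields ruledness, not rationality. What the paper uses (tersely) is that the parametrization itself is defined on a rational toric variety and, in the codimension-one case, is birational onto its image, in the spirit of Kapranov's theorem that the Horn--Kapranov uniformization of an $A$-discriminantal hypersurface is a birational isomorphism; rationality of $D_\Lambda$ is inherited from the parameter space through this birationality, not deduced from abstract unirationality of a hypersurface.
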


\begin{proof}
That $D_\Lambda$ is a unirational algebraic variety in the integral case
follows from the fact that the equations \eqref{eqn:Binomials} 
translates to a set of binomial equations in the variable $\bz = e^{\x}$,
and hence the map $\Phi_\Lambda$ is defined on a toric (and, hence, rational) variety.

Label the top dimensional cells of $\Lambda$ by $\{\lambda_1, \dots, \lambda_k\}$,
as in Remark~\ref{rem:HornKapranovParameterSpace},
and write $A_i = A \cap \lambda_{i}$.
Let $d$ denote the cardinality of $A$, so that $\R^A$ has dimension $d$.
For any subset $I \subset \{1, \dots, k\}$, let $d_I$, $n_I$, and $m_I$
denote the cardinality, dimension, and codimension of
the intersection
\[
A_I = \bigcap_{i \in I} A_i,
\]
By the inclusion-exclusion principle, we have that
\[
n =  \sum_{j = 1}^k (-1)^{j+1}\sum_{|I| = j} n_I,
\quad \text{ and } \quad
m =  \sum_{j = 1}^k (-1)^{j+1}\sum_{|I| = j} m_I.
\] 
Let us compute the dimension of the parameter space of $\Phi_\Lambda$.
Taking the relations \eqref{eqn:Binomials} into account,
we conclude that $\bw$ has $n$ independent coordinates.
Discarding scaling factors $t$ for the circuits $\bc$ which  belong to several top-dimensional cells $\lambda_j$,
we conclude that $\bt$ has $m$ independent coordinates.
Hence, the codimension of $D_\Lambda$ is at least
$d -  n - m = 1$.
\end{proof}

\begin{corollary}
Let $A$ be integral. Then, for each regular subdivision $\Lambda$ of $\cN(A)$,
the Zariski closure $\overline{D}_\Lambda$ is irreducible. \qed
\end{corollary}

From the semi-explicit representation \eqref{eq:HornKapranov}, 
whose parameters fulfill the relations \eqref{eqn:Binomials},
it is straightforward (but expensive) to compute an implicit representation of $\overline{D}_\Lambda$
using Gr{\"o}bner basis techniques.

\begin{figure}[t]
\begin{tabular}{c|c|c}
\includegraphics[height=15mm]{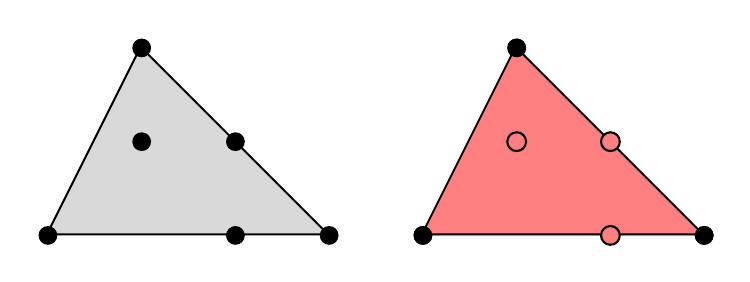}
&
\includegraphics[height=15mm]{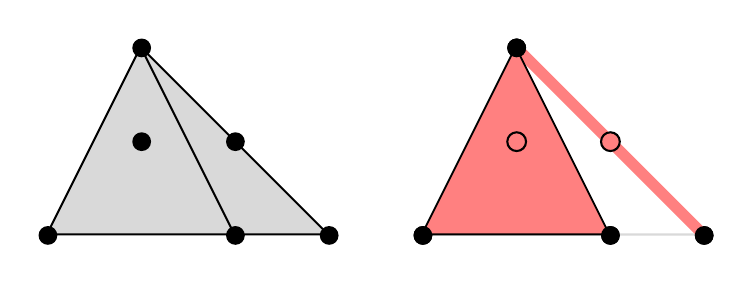}
&
\includegraphics[height=15mm]{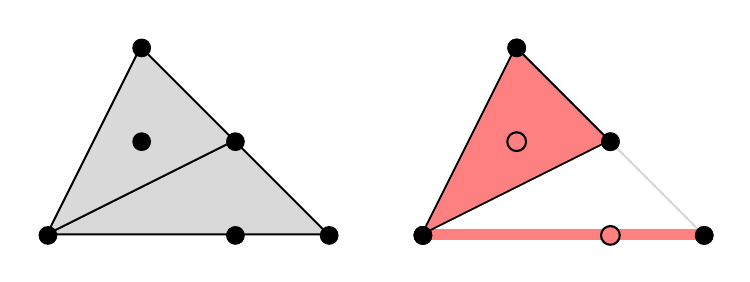}
\\ \hline
\includegraphics[height=15mm]{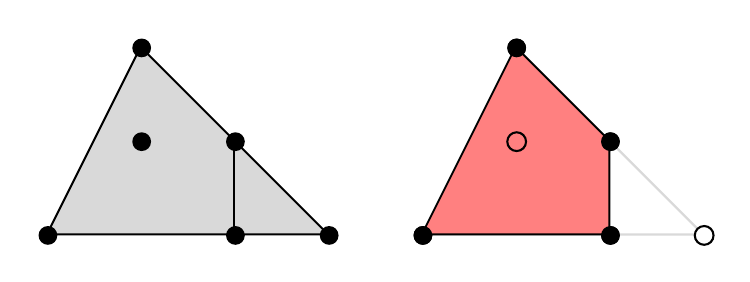}
&
\includegraphics[height=15mm]{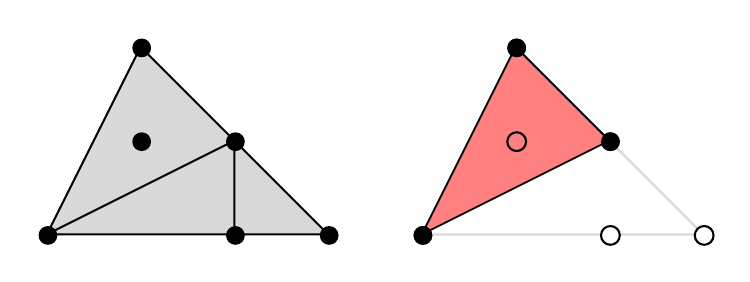}
&
\includegraphics[height=15mm]{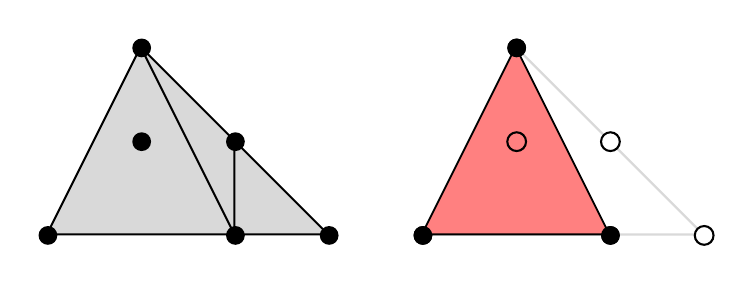}
\\ \hline
\includegraphics[height=15mm]{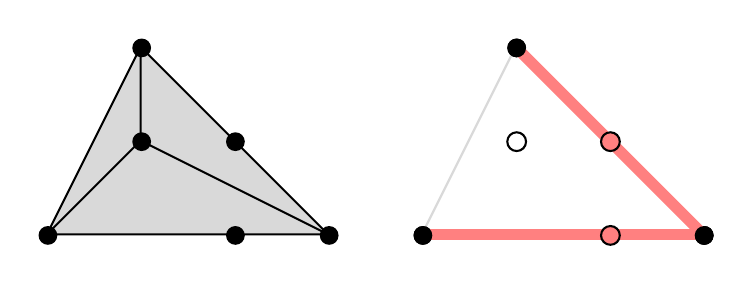}
&
\multicolumn{2}{c}{
\includegraphics[height=15mm]{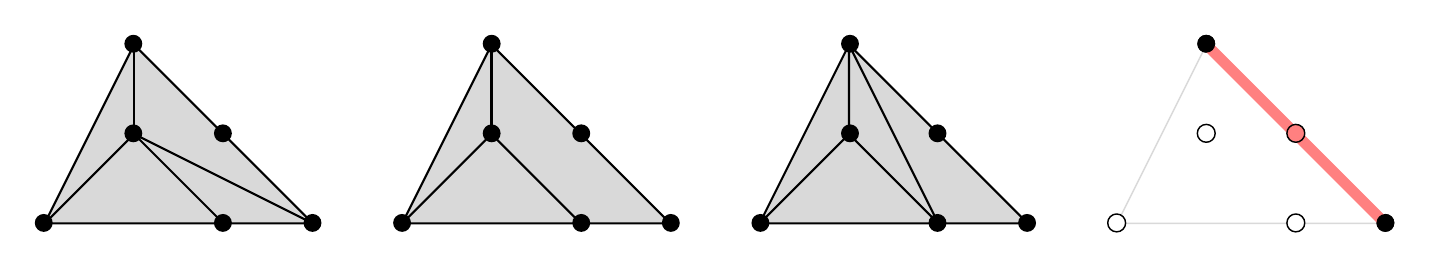}
}
\\ \hline
\includegraphics[height=15mm]{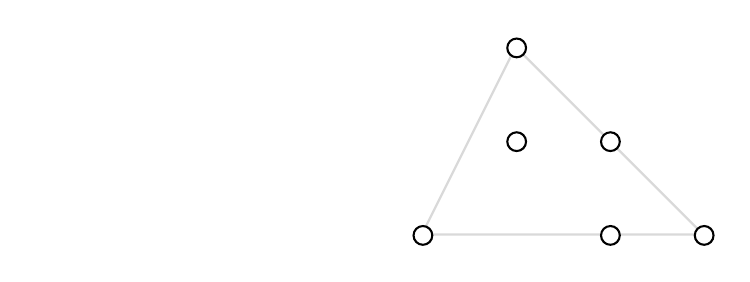}
&
\multicolumn{2}{c}{
\includegraphics[height=15mm]{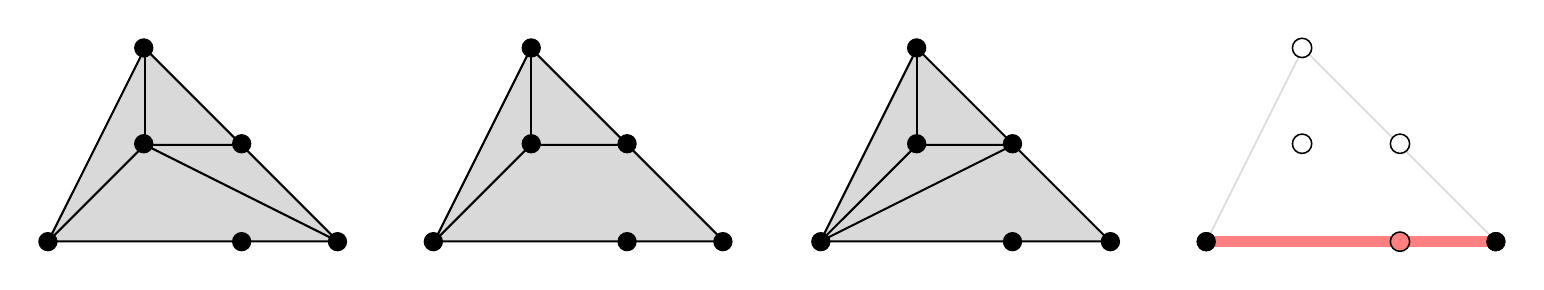}
}
\end{tabular}
\caption{Regular subdivisions of the Newton polytope in
Example~\ref{ex:SimpexCone}, and simplicial circuits
which are contained in some cell of the subdivision.}
\label{fig:SimplexExample}
\end{figure}

\begin{example}
\label{ex:SimpexCone}
Let 
\[
A = \left[ \begin{array}{cccccc} 
1 & 1 & 1 & 1 & 1 & 1 \\ 
0 & 2 & 3 & 1 & 2 & 1\\
0 & 0 & 0 & 1 & 1 & 2
\end{array}\right]
\quad \text{ and } \quad
C^{\top} = 
\left[ \begin{array}{rrrrrr} 
1 & 1 & 0 & -4 & 0 & 2 \\ 
0 & 0 & 1 & 0 & -2 & 1\\
1 & -3 & 2 & 0 & 0 & 0\\
1 & 0 & 0 & -3 & 1 & 1
\end{array}\right],
\]
Three codimension one pieces of the boundary of the sonc-cone
are contained in the coordinate hyperplanes
\[
a_0 = 0, \quad a_2 = 0, \quad \text{ and } \quad a_5 = 0.
\]
In the upper left corner of Figure~\ref{fig:SimplexExample} is the trivial regular subdivision
$\Lambda_0$, which contains $\cN$ as the only full-dimensional cell. 
The $\Lambda$-discriminant $D_0$, which coincides with the $A$-discriminant
$D_A$, has the explicit presentation
\[
\ba = \varphi_A(\bw^{-1})*(t_1 \bc_1 + t_2 \bc_2 + t_3 \bc_3 + t_4 \bc_4).
\]
A Gr{\"o}bner basis computation yields the implicit description
\begin{align*}
D_0(\ba) = & -a_2 a_3^6 + a_1 a_3^5 a_4 + a_0 a_3^3 a_4^3 - a_1^2 a_3^4 a_5 - 
 36 \,a_0 a_2 a_3^3 a_4 a_5 + 30\, a_0 a_1 a_3^2 a_4^2 a_5 + 27 \,a_0^2 a_4^4 a_5 \\
 &+ 72 \,a_0 a_1 a_2 a_3^2 a_5^2 - 96 \,a_0 a_1^2 a_3 a_4 a_5^2 - 
 216\, a_0^2 a_2 a_4^2 a_5^2 + 64 \,a_0 a_1^3 a_5^3 + 432 \,a_0^2 a_2^2 a_5^3.
\end{align*}
The regular subdivision $\Lambda_1$ in the top of the middle column in 
Figure~\ref{fig:SimplexExample} gives the singular sonc-support consisting 
of the two simplicial circuits $\bc_1$ and $\bc_2$.
The regular subdivision $\Lambda_1$ has two top-dimensional cells
$\lambda_1$ and $\lambda_2$, to which we associate two points 
$\bw_1 = (w_{11}, w_{12})$ and $\bw_2 = (w_{21}, w_{22})$.
The cells $\lambda_1$ and $\lambda_2$ intersect in the line 
segment with tangent vector $\bu = (1, -2)$.
That is, the positive discriminant $D_1(\ba)$ admits the representation
\[
\ba = \varphi_A(\bw_1^{-1})*(t_1 \bc_1) +  \varphi_A(\bw_2^{-1})*(t_2 \bc_2), \quad \text{where} \quad
\bw_1^{\bu} = \bw_2^{\bu}.
\]
A Gr{\"o}bner basis computation yields the explicit representation
\[
D_1(\ba) = -a_2^2 a_3^4 + 4 \,a_0 a_1 a_4^4 - 32 \,a_0 a_1 a_2 a_4^2 a_5 + 64\, a_0 a_1 a_2^2 a_5^2.
\]
The computations for the regular subdivision $\Lambda_2$ in the upper right corner of
Figure~\ref{fig:SimplexExample} are similar, and yields the explicit representation
\[
D_2(\ba) =a_2^2 a_3^3 + 4\, a_1^3 a_4 a_5 + 27\, a_0 a_2^2 a_4 a_5.
\]
The regular subdivision on the second row of Figure~\ref{fig:SimplexExample}
do not yield codimension one pieces of the boundary of the sonc-cone,
as the points of $A$ not covered by $\Gamma_f$ cannot appear in a sonc-decomposition by Corollary~\ref{cor:RegularSoncSupporIntersection}.

Consider the regular subdivision $\Lambda_3$ in the left column of the third row of
Figure~\ref{fig:SimplexExample}. Here, $\Gamma_f$ consists
of the two one-dimensional circuits $\bc_2$  and $\bc_3$.
The point $\al_3 = (1,1)$ is not contained in $\Gamma_f$.
In this case, there are no obstructions to adding $\al_3$ to the sonc-support.
Hence, we obtain a codimension one piece of the boundary.
The regular subdivision has three full-dimensional cells, of which only two contains
simplicial circuits. The separating hyperplane has tangent $\bu = (2, -1)$.
\[
\ba = \varphi_A(\bw_2^{-1})*(t_2 \bc_2) +  \varphi_A(\bw_3^{-1})*(t_3 \bc_3) + a_3 \bbe_3 \quad \text{and} \quad
\bw_2^{\bu} = \bw_3^{\bu}.
\]
where $\boldsymbol{e}_i$ denotes the $(i+1)$-st standard basis vector for $i=0, \dots, d$.
A Gr{\"o}bner basis computation
yields the explicit representation
\[
D_3(\ba) = 27 \,a_0 a_4^4 - 216 \,a_0 a_2 a_4^2 a_5 + 64 \,a_1^3 a_5^2 + 432 \, a_0 a_2^2 a_5^2.
\]
The two remaining regular subdivision are induced by faces $F$ of the Newton polytope
$\cN(A)$. The corresponding positive discriminants are simply the positive
$(A\cap F)$-discriminants of the corresponding faces, and they are given by
\[
D_4(\ba) = a_4^2 - 4\, a_2 a_5
\quad \text{and} \quad
D_5(\ba) = 4\,a_1^3 - 9\,a_0 a_1^2.
\]
The boundary of the sonc-cone in the affine space $2a_0 = a_2 = a_5 = 2$
is shown in Figure~\ref{fig:SimplexCone},
with the discriminants $D_0(\ba)$ and $D_1(\ba)$.
In this figure, the boundary of the sonc-cone seems to be disconnected is
an artifact of numerical instabilities, which occur due to the severe
singularities of $D_1(\ba)$ along the line
$\ba(s) = (1, 0, 2, s , -4, 2)$.
\begin{figure}[t]
\includegraphics[width=45mm]{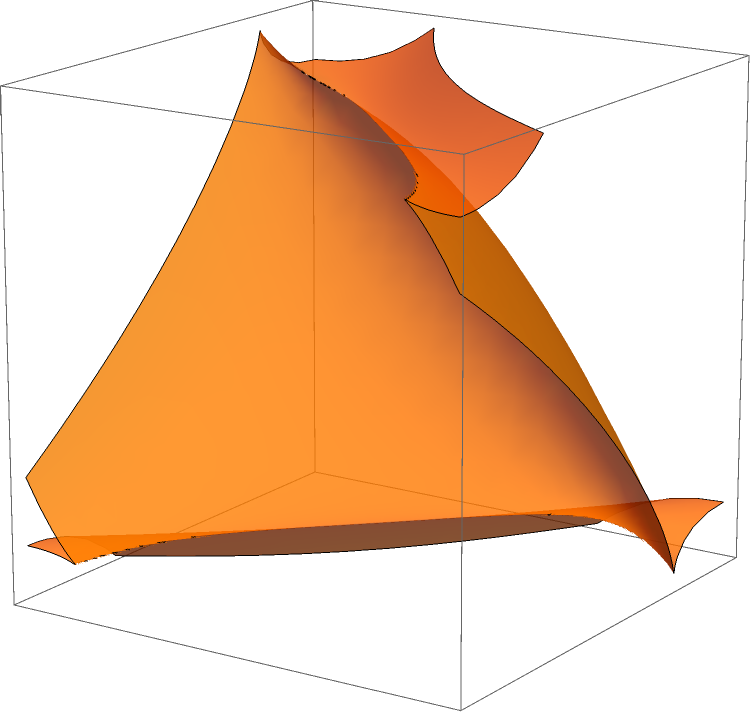}
\hspace{7mm}
\includegraphics[width=45mm]{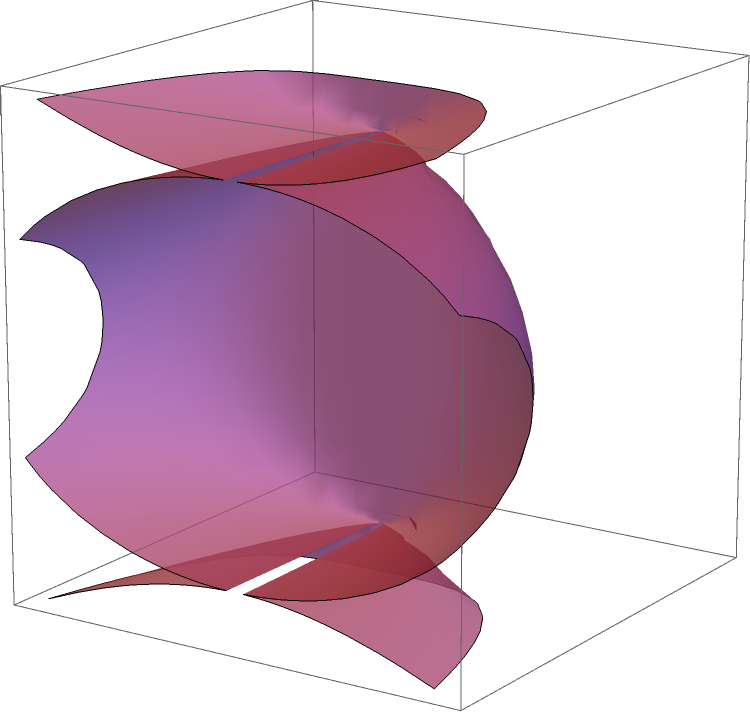}
\hspace{7mm}
\includegraphics[width=45mm]{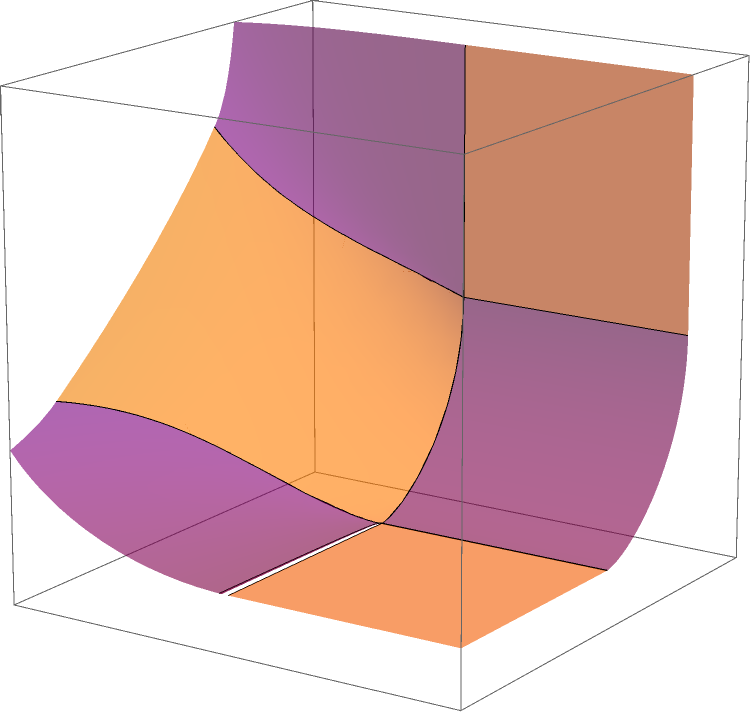}
\caption{The varieties defined by $D_0(\ba)$ and $D_1(\ba)$, 
and the boundary of $\Splus_A$, from Example \ref{ex:SimpexCone}.}
\label{fig:SimplexCone}
\end{figure}
\end{example}

\section{On the equality $\Splus_A = \Pplus_A$}
\label{sec:Equality}

We deduce, as a corollary of our description of the boundary of the sonc-cone,
a combinatorial characterization of when the sonc-cone is equal to the nonnegativity cone.

\begin{thm}
\label{thm:NonnegativeIsSonc}
Let $A$ be a support set, generic in the sense that all simplicial circuits $\bc \in C$
are full-dimensional. Then, $\Splus_A = \Pplus_A$ if and only if for each $f\in \Splus_A$, the set 
$\Gamma_f$ is defined by a unique top-dimensional cell $\gamma$.
\end{thm}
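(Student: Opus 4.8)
The plan is to compare the two boundaries. Since $\Splus_A\subseteq\Pplus_A$ and both are full-dimensional closed convex cones, a routine convexity argument shows that $\Splus_A=\Pplus_A$ precisely when $\partial\Splus_A\subseteq\partial\Pplus_A$: given $f\in\Pplus_A$, slide along the segment joining $f$ to a fixed point of $\operatorname{int}\Splus_A$; if $f$ itself were not in $\Splus_A$, the largest parameter at which the segment meets the closed set $\Splus_A$ would yield a point of $\partial\Splus_A$ lying in $\operatorname{int}\Pplus_A$, a contradiction. So by Theorem~\ref{thm:Main} it suffices, for each non-trivial sonc-complex, to decide whether the part of the corresponding positive $\Lambda$-discriminant that meets $\partial\Splus_A$ also lies in $\partial\Pplus_A$, i.e.\ whether its generic point has a real zero. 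Two facts from the genericity hypothesis feed into this. First, no cell of dimension strictly between $1$ and $n-1$ can carry a relative interior point of $A$, since such a point together with the lattice points of that cell would span a circuit of dimension $<n$; hence every maximal cell of a sonc-complex is $n$-dimensional, and "$\Gamma$ is defined by a unique top-dimensional cell $\gamma$" means exactly that the cells of $\Gamma$ carrying a relative interior point of $A$ are the faces of one $n$-cell $\gamma$. Second, if $f\in\partial\Splus_A$ lies in no coordinate hyperplane then $\smsp(f)\neq\varnothing$, and Corollary~\ref{cor:RegularSoncSupporIntersection} applied to a full-dimensional $\bc\in\smsp(f)$ forces $\mosp(f)=\varnothing$; thus every reduced sonc-decomposition of such an $f$ reads $f=\sum_{\bc}t_{\bc}\,g_{\bc}$ with each $g_{\bc}$ a simplicial agiform whose singular locus is the single point at which $g_{\bc}$ vanishes, and $f$ has a real zero $\x_0$ if and only if $\x_0\in\sing(g_{\bc})$ for all the $\bc$ occurring.

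For the implication ``$\Leftarrow$'' I would argue by induction on $|A|$, the base case $m\le 2$ being Proposition~\ref{prop:Codimension2}. Strata of $\partial\Splus_A$ inside a coordinate hyperplane $\{a_\alpha=0\}$ reduce to the sub-configuration $A\setminus\{\alpha\}$, which is again generic and still satisfies the combinatorial hypothesis (every regular subdivision of a face of $\cN(A)$ extends to one of $\cN(A)$, so a sonc-complex of $A\setminus\{\alpha\}$ with two top cells would lift to one of $A$), and for which $\Splus$ and $\Pplus$ restrict compatibly to that hyperplane; so these strata are settled by induction. For a stratum not in a coordinate hyperplane, truncate to an extremal subset so that $\Gamma_f$ is connected and apply Theorem~\ref{thm:MultivariateTropicalTheorem} to obtain a regular subdivision $\Lambda$ and a dual tropical complex $M$ arranging the singular loci. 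By hypothesis the cells of $\Gamma(\Lambda)$ carrying an interior point of $A$ are the faces of one $n$-cell $\gamma$; every $\bc\in\smsp(f)$ is full-dimensional, hence lies in $\gamma$, so its dual cell $\mu$ with $\check\mu=\gamma$ is the single vertex $\x_\gamma$ of $M$. Then $\x_\gamma\in\sing(g_{\bc})$ for all $\bc$, whence $f(\x_\gamma)=0$ and $f\in\partial\Pplus_A$. This gives $\partial\Splus_A\subseteq\partial\Pplus_A$, and the convexity reduction finishes the direction.

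For ``$\Rightarrow$'' I prove the contrapositive. If the condition fails there is a regular subdivision $\Lambda$ with two distinct $n$-cells $\gamma_1,\gamma_2$, each carrying a relative interior point $\alpha_i$ of $A$; by Lemma~\ref{lem:SimplexCircuitConstruction} applied inside $A\cap\gamma_i$ and genericity there is a full-dimensional simplicial circuit $\bc_i\subseteq A\cap\gamma_i$ with relative interior point $\alpha_i$. Choose a tropical complex $M$ dual to $\Lambda$ in general position so that the vertices $\x_{\gamma_1}\neq\x_{\gamma_2}$ are distinct, and let $f$ be a point of the positive $\Lambda$-discriminant $D_\Lambda^+$ with all circuit coefficients positive (Definitions~\ref{def:LambdaDiscriminant} and~\ref{def:PositiveLambdaDiscriminant}), adding a small positive monomial at each vertex of $\cN(A)$ not covered by a circuit of $\Lambda$. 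Then $f\in\Splus_A$; every vertex of $\cN(A)$ occurs in $f$ with positive coefficient; $f>0$ everywhere, since the monomial summands are strictly positive while a common zero of all the agiforms would have to lie in $\sing(g_{\bc_1})=\{\x_{\gamma_1}\}$ and in $\sing(g_{\bc_2})=\{\x_{\gamma_2}\}$; and $f(\x)\to+\infty$ as $\|\x\|\to\infty$ because each agiform summand, being supported on a full-dimensional circuit, grows in every direction. Hence $f\in\operatorname{int}\Pplus_A$. Granting that $f\in\partial\Splus_A$, we get $f\in\partial\Splus_A\setminus\partial\Pplus_A$ and therefore $\Splus_A\subsetneq\Pplus_A$.

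The hard part will be precisely that last claim: that the constructed $f$ — equivalently the generic point of $D_\Lambda^+$ for a ``bad'' $\Lambda$ — genuinely lies on $\partial\Splus_A$ rather than in its interior. Morally this is where the tropical rigidity of Theorem~\ref{thm:MultivariateTropicalTheorem} (already of Theorem~\ref{thm:OverlappingConstraint}) is indispensable: one must rule out that the two agiforms $g_{\bc_1},g_{\bc_2}$, whose singular points are forced apart, can be absorbed into a sonc-decomposition using a circuit bridging $\gamma_1$ and $\gamma_2$; concretely it should reduce to showing $\alpha_1\notin\ssp(f)$, so that $\ssp(f)\neq A\cup C$ and $f\notin\operatorname{int}\Splus_A$ by Proposition~\ref{pro:InteriorPositiveCombination}. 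The dimension count of Proposition~\ref{pro:AtMostHypersurface} (giving $\operatorname{codim}D_\Lambda^+\ge 1$) is a useful sanity check but does not by itself position $D_\Lambda^+$ against the boundary. A secondary nuisance, in ``$\Leftarrow$'', is the bookkeeping of the coordinate-hyperplane strata and the compatibility of $\Splus$ and $\Pplus$ with deleting a point of $A$.
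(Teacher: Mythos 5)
Your ``$\Leftarrow$'' direction is essentially the paper's argument, just routed through Theorem~\ref{thm:MultivariateTropicalTheorem} instead of directly through Proposition~\ref{pro:AllCircuitsSingularPoint}: under the genericity and single-cell hypotheses, every circuit in $\smsp(f)$ is full-dimensional and lies in the one top cell $\gamma$, so all agiforms share the single point dual to $\gamma$, Corollary~\ref{cor:RegularSoncSupporIntersection} kills $\mosp(f)$, and $f$ acquires a real zero. The paper gets the common zero in one step from Proposition~\ref{pro:AllCircuitsSingularPoint}, which saves you the truncation and the induction over coordinate hyperplanes (your claim that the single-cell hypothesis passes to $A\setminus\{\al\}$ is asserted rather than proved, and is not obviously true when $\al$ is a vertex of $\cN(A)$; the paper avoids this by treating the case $\Gamma_f=\varnothing$ directly: a missing vertex monomial already puts $f$ on $\partial\Pplus_A$). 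The preliminary convexity reduction to $\partial\Splus_A\subseteq\partial\Pplus_A$ is exactly the paper's ``it suffices to show'' step.

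The genuine gap is the one you flag yourself in the ``$\Rightarrow$'' direction: you never prove that the constructed $f$ lies on $\partial\Splus_A$, and this cannot be waved away. Theorem~\ref{thm:Main}/\ref{thm:MultivariateTropicalTheorem} only says boundary points are tropically arranged; it does not say tropically arranged sums of agiforms are boundary points, so the converse inclusion you need is not available by citation. The tool that actually anchors this is Lemma~\ref{lem:MultivariateOrdered} (and, behind it, Example~\ref{ex:DimensionOneBoundaryLemma} and Proposition~\ref{lem:UnivariateUniqueTropical}): for two full-dimensional circuits sharing a facet, $g_1+g_2$ lies on $\partial\Splus_{\bc_1\cup\bc_2}$ \emph{iff} $\<\bw_1,\bv\>\geq\<\bw_2,\bv\>$, and since $\Splus_{A'}=\Splus_A\cap\R^{A'}$ by Proposition~\ref{prop:SoncInclusionTheorem}, choosing a strict inequality gives a strictly positive element of $\partial\Splus_A$. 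But that element also lies on $\partial\Pplus_A$ (the uncovered vertices of $\cN(A)$ carry zero coefficients), so it does not yet separate the two cones; and the moment you add the small positive vertex monomials to land in $\operatorname{int}\Pplus_A$, you must re-verify via Proposition~\ref{pro:InteriorPositiveCombination} that $\ssp(f)\neq A\cup C$ --- precisely the step where a circuit bridging $\gamma_1$ and $\gamma_2$, or the added monomials themselves, could absorb the obstruction. It is worth noting that the paper's own proof of this theorem establishes only the ``$\Leftarrow$'' inclusion and is silent on ``$\Rightarrow$''; so you are attempting more than the displayed proof does, but as written your argument for that direction is a construction plus an unproved claim, not a proof.
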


\begin{proof}
It suffices to show that each exponential sum $f$ which belongs to the
boundary of $\Splus_A$ belongs to the boundary of $\Pplus_A$.
If $\Gamma_f$ is empty, then some monomial corresponding to a vertex of $\cN(A)$ has a vanishing coefficient,
implying that $f$ belongs to the boundary of $\Pplus_A$.
If the $\Gamma_f$ is non-empty, then by assumption $\gamma$ is
a sub-polytope of $\cN(A)$ fulfilling that $c \subset \gamma$ implies that $c \in S(f)$. 
Since $S(f)$ is non-empty, and all simplicial circuits are full-dimensional,
Corollary~\ref{cor:RegularSoncSupporIntersection} implies that $M(f) = \varnothing$.
Then, $f$ is effectively supported on $A \cap \gamma$, and $S(f)$ is the set of all simplicial
circuits contained in $A\cap \gamma$.
Hence, by Proposition~\ref{pro:AllCircuitsSingularPoint},
$f$ has a singular point, so that $f$ belongs to the boundary of $\Pplus_A$.
\end{proof}

In \cite[Conjecture 22]{Chandrasekaran:Murray:Wierman}, it was conjectured that
 the sonc-cone is equal to the nonnegativity cone
only if either the support set consists of the vertices of a simplex with two additional non-vertices, or if there is a unique non-vertex. We provide a family of counterexamples to this conjecture. 

\begin{remark}
\label{rem:lastexample}
Let $V$ be the vertices of the polytope $\cN(V)$, and assume that the origin is contained in the interior of $\cN(V)$. Let $F \preccurlyeq \cN(V)$ be a simplicial face of $\cN(V)$. 
Set $V_F = V\cap F$, and consider the
support set 
\[
A = V \cup (1-\varepsilon) V_F,
\]
where $\varepsilon > 0$ is small.
If the origin is in generic position relative to the vertices of $\cN(V)$,
then the support set $A $ contains only full-dimensional
circuits. 

If the origin is sufficiently close to the relative interior of the face $F$, 
then the origin lies in the complement polytope
$\cN(V \setminus \{\al\})$ for each $\al \in V_F$. 
This implies that $A \setminus \{\al, (1-\varepsilon) \al\}$ contains no simplicial circuits.
It follows that each simplicial circuit $c \subset A$ contains $(1-\varepsilon) V_F$.
We conclude that the Newton polytope of any two simplicial circuits 
overlap in a full-dimensional domain.
Hence, Theorem~\ref{thm:NonnegativeIsSonc}
implies that $\Splus_A = \Pplus_A$.
\end{remark}

\begin{example}
\label{ex:Equality1}
If $F$ is the full face, so that $V$ is the vertices of a simplex, then we obtain a configuration $A$ with $n+1$ interior points such that  $\Splus_A = \Pplus_A$.
\end{example}

\begin{example}
\label{ex:lastexample}
To obtain an example of a configuration such that $\cN(A)$ is not a simplex,
which contains more than one interior point, consider the configuration
\[
A = \left[\begin{array}{cccccc}
1 & 1 & 1 &1 & 1 & 1\\
0 & 0 & 1 & 1 & 5 & 5\\
0 & 5 & 2 & 3 & 0 & 5
\end{array}\right],
\]
We have depicted in Figure~\ref{fig:equality1} the five (up to symmetries) possible
sets $\overline{\Gamma}_f$ associated to $A$, each of which consists of a single two-dimensional cell.
Hence, $\Splus_A = \Pplus_A$.
\begin{figure}[t]
\includegraphics[height=25mm]{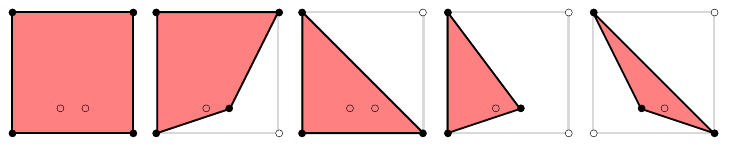}
\caption{The five sets $\overline{\Gamma}_f$ from Example \ref{ex:lastexample}.}
\label{fig:equality1}
\end{figure}
\end{example}

\newcommand{\etalchar}[1]{$^{#1}$}
\providecommand{\href}[2]{#2}
\bibliographystyle{amsalpha}

\end{document}